\newcommand{\E}{\mathbb{E}}
\newcommand{\R}{\mathbb{R}}
\newcommand{\N}{\mathbb{N}}
\newcommand{\IA}{\mathcal{A}}
\newcommand{\IB}{\mathcal{B}}
\newcommand{\IL}{\Lambda}
\newcommand{\dd}{\text{d}}
\newcommand{\HH}{\mathcal{H}}
\newcommand{\EE}{\mathcal{E}}
\newcommand{\WW}{\mathcal{W}}
\newtheorem{theo}{Theorem}[section]
\newtheorem{rem}[theo]{Remark}
\newtheorem{propo}[theo]{Proposition}
\newtheorem{lemma}[theo]{Lemma}
\newcommand{\LTexact}{\rm LT, exact}
\newcommand{\LTexpo}{\rm LT, expo}
\newcommand{\LTimp}{\rm LT, imp}
\newcommand{\AC}{\rm AC}
\begin{document}

\title{Splitting schemes for FitzHugh--Nagumo stochastic partial differential equations}

\author{Charles-Edouard Br\'ehier}
\address{Univ Lyon, Université Claude Bernard Lyon 1, CNRS UMR 5208, Institut Camille Jordan, 43 blvd. du 11 novembre 1918, F--69622 Villeurbanne cedex, France}
\email{brehier@math.univ-lyon1.fr}

\author{David Cohen}
\address{Department of Mathematical Sciences, Chalmers University of Technology and University of
Gothenburg, SE--41296 Gothenburg, Sweden}
\email{david.cohen@chalmers.se}

\author{Giuseppe Giordano}
\address{Department of Mathematics, University of Salerno, I--84084 Fisciano, Italy}
\email{gigiordano@unisa.it}

\begin{abstract}
We design and study splitting integrators for the temporal discretization of
the stochastic FitzHugh--Nagumo system. This system is a model for signal propagation in nerve cells where the voltage
variable is solution of a one-dimensional parabolic PDE with a cubic nonlinearity driven by
additive space-time white noise. We first show that the numerical solutions have finite moments.
We then prove that the splitting schemes have, at least, the strong rate of convergence $1/4$.
Finally, numerical experiments illustrating the performance of the splitting schemes are provided.
\end{abstract}

\maketitle

\section{Introduction}

The deterministic FitzHugh--Nagumo system is a simplified two-dimensional version of the famous
Hodgkin--Huxley model which describes how action potentials propagate along an axon.
Noise is omnipresent in neural systems and arises from different sources:
it could be internal noise (such as random synaptic input from other neurons) or
external noise, see for instance \cite{LINDNER2004321} for details.
It was noted in \cite{sp05noise} that the addition of an appropriate amount
of noise in the model helps to detect weak signals.
All this has attracted a large body of works on
the analysis of the influence of external random perturbations in neurons in the recent years, see for instance \cite{MR4282767,LINDNER2004321,MR2661971,deepak,MR4249991,sp05noise,MR3051033,MR3051032,MR3982703}.

In this article, we consider the stochastic FitzHugh--Nagumo system
\begin{equation*}
\left\lbrace
\begin{aligned}
&\frac\partial{\partial t} u(t,\zeta)=\frac{\partial^2}{\partial \zeta^2} u(t,\zeta)+u(t,\zeta)-u^3(t,\zeta)-v(t,\zeta)+ \frac{\partial^2}{\partial t\partial\zeta} W(t,\zeta),\\
&\frac\partial{\partial t} v(t,\zeta)=\gamma_1 u(t,\zeta)-\gamma_2 v(t,\zeta)+\beta,\\
&\frac\partial{\partial \zeta}u(t,0)=\frac\partial{\partial \zeta}u(t,1)=0,\\
&u(0,\zeta)=u_0(\zeta),v(0,\zeta)=v_0(\zeta),
\end{aligned}
\right.
\end{equation*}
for $\zeta\in(0,1)$ and $t\geq0$.
The objective of this article is to design and analyse numerical integrators, which treat explicitly the nonlinearity, for the temporal discretization of the system above, based on splitting strategies.

In the stochastic partial differential equation (SPDE) above, the unknowns $u=\bigl(u(t)\bigr)_{t\ge 0}$ and $v=\bigl(v(t)\bigr)_{t\ge 0}$ are $L^2(0,1)$-valued stochastic processes, with initial values $u_0,v_0\in L^2(0,1)$,
see~Section~\ref{sec:setting} and the standard monograph~\cite{MR3236753}
on stochastic evolution equations in Hilbert spaces.
In addition, $\gamma_1,\gamma_2,\beta\in\mathbb R$ are three real-valued parameters,
$\Delta=\frac{\partial^2}{\partial\zeta^2}$ is the Laplace operator endowed with homogeneous Neumann boundary conditions, and $\bigl(W(t)\bigr)_{t\geq0}$ is a cylindrical Wiener process, meaning that the component $u$ is driven by space-time white noise. The component $u$ represents the voltage variable while the component $v$ the recovery variable. The noise represents random fluctuations of the membrane potential, see~\cite{sp05noise} for a related model with a scalar noise. Note that in the considered system only the evolution of the voltage variable $u$ is driven by a Wiener process. Having noise for the evolution of the recovery variable $v$ would correspond to modelling different biological phenomena which are not treated in this work.

The major difficulty in the theoretical and numerical analysis of the SPDE system above is the nonlinearity $u-u^3$ appearing in the evolution of the component $u$: this nonlinearity is not globally Lipschitz continuous and has polynomial growth. As proved in~\cite{arnulf}, using a standard explicit discretization like the Euler--Maruyama method would yield numerical schemes which usually do not converge: more precisely, moment bounds, uniform with respect to the time step size, would not hold for such methods.

For an efficient numerical simulation of the above SPDE system, we propose to exploit a splitting strategy to define integrators and we show that appropriate moment bounds and strong error estimates can be obtained.
In a nutshell, the main idea of a splitting strategy is to decompose the vector field,
appearing in the evolution equation, in several parts, in order to exhibit subsystems which can be integrated exactly (or easily). One then composes the (exact or approximate) flows associated with the subsystems to define integrators applied to the original problem.  Splitting schemes have a long history in the numerical analysis of ordinary and partial differential equations, see for instance \cite{MR3642447,MR2840298,MR2132573,MR2009376} and references therein. Splitting integrators have recently been applied and analysed in the context of stochastic ordinary and partial differential equations.
Without being exhaustive, we refer the interested reader to
\cite{MR3511359,MR3570281,MR2608370,bct21,cv21,MR3362507,MR1860968} for the finite-dimensional context and to   \cite{MR4263224,bc20,MR4400428,MR4019051,MR4132896,MR2646103,MR3912762,MR3607207,MR3021492,MR3119724,m06,MR4278943,MR3839068} for the context of SPDEs.

The main result of this paper is a strong convergence result, with rate of convergence $1/4$,
for easy to implement splitting integrators, see Equation~\eqref{eq:LTscheme} in Subsection~\ref{sec-splitting},
for the time discretization of the SPDE defined above, see Theorem~\ref{theo:error}
for a precise statement.
To the best of our knowledge, Theorem~\ref{theo:error} is the first strong convergence result obtained for
a time discretization scheme applied to the stochastic FitzHugh--Nagumo SPDE system.
The first non-trivial step of the analysis is to obtain suitable moment bounds for the splitting scheme, see Theorem~\ref{theo:momentbounds}. Note that the proof of the moment bounds of Theorem~\ref{theo:momentbounds} is inspired by the article~\cite{MR3986273} where splitting schemes for the
stochastic Allen--Cahn equation
\[
\text{d} u(t)=\Delta u(t)\,\text{d} t+(u(t)-u^3(t))\,\text{d} t+ \text{d} W(t) 
\]
were studied. The proof of the strong convergence error estimates of Theorem~\ref{theo:error} is inspired by the article~\cite{MR4019051}. However, one needs a dedicated and detailed analysis since the considered stochastic FitzHugh--Nagumo system is not a parabolic stochastic evolution system, and several arguments are non trivial. Note also that the construction of the splitting scheme is inspired by the recent article \cite{BUCKWAR2022} which treats a finite dimensional version
\begin{equation*}
\left\lbrace
\begin{aligned}
&\text{d} u(t)=(u(t)-u^3(t)-v(t))\,\text{d} t,\\
&\text{d} v(t)=(\gamma_1 u(t)-\gamma_2 v(t)+\beta)\,\text{d} t+\text{d} B(t),\\
&u(0)=u_0,v(0)=v_0,
\end{aligned}
\right.
\end{equation*}
of the stochastic FitzHugh--Nagumo system (where the finite-dimensional noise $B$ is in the $v$-component).

We now review the literature related to this work. The recent article \cite{BUCKWAR2022} analyses the strong convergence of
splitting schemes for a class of semi-linear stochastic differential equations (SDEs)
as well as preservation of possible structural properties of the problem. Applications to the proposed schemes
to the stochastic FitzHugh--Nagumo SDE are also presented.
The work \cite{MR2467646} performs extensive numerical simulations on the FitzHugh--Nagumo equation with space-time white noise in $1d$.
A finite difference discretization is used in space, while the classical Euler--Maruyama is used in time.
The article \cite{MR3394442} studies numerically the FitzHugh--Nagumo equation with colored noise in $2d$.
In particular, the authors use a finite element discretization in space and the semi-implicit Euler--Maruyama scheme in time. The two previously mentioned works employ crude explicit discretization for the nonlinearity and
therefore may have the issues about moment bounds discussed above.
The work \cite{MR3498982} proves convergence (without rates) of a fully-discrete numerical scheme, based on a Galerkin method in space and the tamed Euler scheme in time,
for a general SPDE with super-linearly growing operators. This is then applied to the FitzHugh--Nagumo equation with space-time white noise in $1d$.
The articles~\cite{MR3290962} and~\cite{MR3511288} prove strong convergence rates of a finite difference spatial
discretization of the FitzHugh--Nagumo equation
with space-time white noise in $1d$.

This article is organized as follows. The setting is given in Section~\ref{sec:setting}, in particular this allows us to state a well-posedness result for the considered stochastic FitzHugh--Nagumo system. The splitting strategy, the proposed integrators and the main results of the paper are then presented in Sections~\ref{sec-subsystem},~\ref{sec-splitting} and~\ref{sec-main} respectively.
Several auxiliary results are stated and proved
in Section~\ref{sec-prel}. Section~\ref{sectionProofs} gives the proofs of Theorems~\ref{theo:momentbounds} and~\ref{theo:error}. Finally, numerical experiments are provided in Section~\ref{sect-num}.

\section{Setting}\label{sec:setting}

This section is devoted to introducing the functional framework, the linear and nonlinear operators, and the Wiener process. This allows us to consider the stochastic FitzHugh--Nagumo SPDE system as a stochastic evolution equation in the classical framework of \cite{MR3236753}.

\subsection{Functional framework}

Let us first introduce the infinite-dimensional, separable Hilbert space $H=L^2(0,1)$ of square integrable functions from $(0,1)$ to $\R$.
This space is equipped with the inner product $\langle\cdot,\cdot\rangle_H$ and the norm $\|\cdot\|_H$ which satisfy
\[
\langle u_1,u_2\rangle_H=\displaystyle\int_0^1 u_1(\zeta)u_2(\zeta)\,\dd \zeta,\quad\|u\|_{H}=\sqrt{\langle u,u\rangle_H},
\]
respectively, for all $u_1,u_2,u\in H$. Let us then introduce the product space $\HH=H\times H$, which is also an infinite-dimensional, separable Hilbert space, with the inner product $\langle\cdot,\cdot\rangle_\HH$ and the norm $\|\cdot\|_\HH$ defined by
\[
\langle x_1,x_2\rangle_\HH=\langle u_1,u_2\rangle_H+\langle v_1,v_2\rangle_H,\quad\|x\|_\HH=\sqrt{\|u\|_H^2+\|v\|_H^2},
\]
for all $x_1=(u_1,v_1),x_2=(u_2,v_2),x=(u,v)\in\HH$.

Let also $E=\mathcal{C}^0([0,1])$ be the space of continuous functions from $[0,1]$ to $\R$, and set $\mathcal{E}=E\times E$. Then $E$ and $\EE$ are separable Banach spaces, with the norms $\|\cdot\|_E$ and $\|\cdot\|_\EE$ defined by
\[
\|u\|_E=\max_{\zeta\in[0,1]}|u(\zeta)|,\quad \|x\|_\EE=\max\bigl(\|u\|_E,\|v\|_E\bigr)
\]
for all $u\in E$ and $x=(u,v)\in\EE$.

Let us denote the inner product and the norm in the finite-dimensional Euclidean space $\R^2$ by $\langle\cdot,\cdot\rangle$ and $\|\cdot\|$ respectively. If $M$ is a $2\times2$ real-valued matrix, let $\vvvert M\vvvert=\underset{x\in\R^2;~\|x\|=1}\sup~\|Mx\|$.

Finally, in the sequel, $\N=\{1,2,\ldots\}$ denotes the set of integers and $\N_0=\{0\}\cup\N=\{0,1,\ldots\}$ denotes the set of nonnegative integers. We often write $j\ge 1$ (resp. $j\ge 0$) instead of $j\in\N$ (resp. $j\in\N_0$).

\subsection{Linear operators}

This subsection presents the material required to use the semigroup approach for SPDEs, see for instance \cite{MR3236753}.

For all $j\in\N$, set $\lambda_j=(j\pi)^2$ and $e_j(\zeta)=\sqrt{2}\cos(j\pi\zeta)$ for all $\zeta\in[0,1]$. In addition, set $\lambda_0=0$ and $e_0(\zeta)=1$ for all $\zeta\in[0,1]$. Then $\bigl(e_j\bigr)_{j\ge 0}$ is a complete orthonormal system of $H$, and one has
\[
\Delta e_j=-\lambda_je_j
\]
for all $j\ge 0$, where $\Delta$ denotes the Laplace operator with homogeneous Neumann boundary conditions. For all $u\in H$ and all $t\ge 0$, set
\begin{equation}\label{eq:expoDelta}
e^{t\Delta}u=\sum_{j\ge 0}e^{-t\lambda_j}\langle u,e_j\rangle_{H} e_j.
\end{equation}
Then, for any $u_0\in H$, the mapping $(t,\zeta)\mapsto u(t,\zeta)=e^{t\Delta}u_0(\zeta)$ is the unique solution of the heat equation on $(0,1)$ with homogeneous Neumann boundary conditions and initial value $u(0,\cdot)=u_0$:
\[
\left\lbrace
\begin{aligned}
&\frac{\partial u(t,\zeta)}{\partial t}=\Delta u(t,\zeta),\quad t>0,~\zeta\in(0,1),\\
&\frac{\partial u(t,0)}{\partial\zeta} =\frac{\partial u(t,1)}{\partial\zeta} =0,\quad t>0,\\
&u(0,\zeta)=u_0(\zeta),\quad \zeta\in(0,1).
\end{aligned}
\right.
\]
For all $\alpha\in[0,2]$, set
\begin{align*}
&H^\alpha=\left\{u\in H;~\sum_{j\ge 0}\lambda_j^{\alpha}\langle u,e_j\rangle_{H}^2<\infty\right\},\\
&(-\Delta)^{\frac{\alpha}{2}} u=\sum_{j\ge 0}\lambda_j^{\frac{\alpha}{2}}\langle u,e_j\rangle_{H} e_j,\quad u\in H^\alpha.
\end{align*}
Observe that $H^0=H=L^2(0,1)$.
The Laplace operator $\Delta$ with homogeneous Neumann boundary conditions is a self-adjoint unbounded linear operator on $H$, with domain $D(\Delta)=H^2$. We also let
$\HH^\alpha=H^\alpha\times H$ for all $\alpha\in[0,2]$.

Let us now introduce the linear operator $\IL$, defined as follows: for all $x=(u,v)\in \HH^2$, set
\[
\IL x=\begin{pmatrix} -\Delta u\\ 0\end{pmatrix}.
\]
Then $\IL$ is a self-adjoint unbounded linear operator on $\HH$, with domain $D(\IL)=\HH^2$. For all $x=(u,v)\in\HH$ and $t\ge 0$, set
\begin{equation}\label{eq:expoIL}
e^{-t\IL}x=\begin{pmatrix} e^{t\Delta}u\\ v\end{pmatrix}.
\end{equation}
Regularity estimates for this operator are presented in Section~\ref{sec-prel} below.

\subsection{Nonlinear operator}
Let $\beta,\gamma_1,\gamma_2\in\R$ be parameters of the model. Define the mapping $F:\R^2\to\R^2$ such that for all $x=(u,v)\in\R^2$ one has
\[
F(x)=\begin{pmatrix} u-u^3-v\\ \gamma_1 u- \gamma_2v+\beta\end{pmatrix}.
\]
In order to define splitting schemes, it is convenient to introduce two auxiliary mappings $F^{\rm NL}:\R^2\to\R^2$ and $F^{\rm L}:\R^2\to\R^2$ defined as follows: for all $x=(u,v)\in\R^2$, set
\begin{align*}
&F^{\rm NL}(x)=\begin{pmatrix} u-u^3 \\ \beta\end{pmatrix} \\
&F^{\rm L}(x)=\begin{pmatrix} -v\\ \gamma_1 u-\gamma_2v\end{pmatrix}=Bx,
\end{align*}
where the matrix $B$ is defined by
\[
B=\begin{pmatrix} 0 & -1\\ \gamma_1 & -\gamma_2\end{pmatrix}.
\]
One then has
\begin{equation}\label{eq:F}
F(x)=F^{\rm NL}(x)+F^{\rm L}(x)
\end{equation}
for all $x\in\R^2$. The mapping $F^{\rm L}$ is globally Lipschitz continuous: for all $x_1,x_2\in\R^2$ one has
\[
\|F^{\rm L}(x_2)-F^{\rm L}(x_1)\|\le \vvvert B\vvvert \|x_2-x_1\|.
\]
However $F$ and $F^{\rm NL}$ are only locally Lipschitz continuous, and satisfy a one-sided Lipschitz continuity property: there exists $C\in(0,\infty)$ such that for all $x_1,x_2\in\R^2$ one has
\begin{equation}\label{eq:onesidedF}
\langle x_2-x_1,F^{\rm NL}(x_2)-F^{\rm NL}(x_1)\rangle\le C\|x_2-x_1\|^2,\quad \langle x_2-x_1,F(x_2)-F(x_1)\rangle\le C\|x_2-x_1\|^2.
\end{equation}
In the sequel, an abuse of notation is used for simplicity: the same notation is employed for a mapping $f:\R^2\to\R^2$ and for the associated Nemytskii operator defined on $\HH$ or on $\EE$ by $f(u,v)=f(u(\cdot),v(\cdot))$.

\subsection{Wiener process}

It remains to define the noise that drives the stochastic FitzHugh--Nagumo system. Let $\bigl(W(t)\bigr)_{t\ge 0}$ be a cylindrical Wiener process on $H$: given a sequence $\bigl(\beta_j(\cdot)\bigr)_{j\ge 0}$ of independent standard real-valued Wiener processes, defined on a probability space $(\Omega,\mathcal{F},\mathbb{P})$ equipped with a filtration $\bigl(\mathcal{F}_t\bigr)_{t\ge 0}$ which satisfies the usual conditions
and where $\E[\cdot]$ denotes the expectation operator on the probability space, set
\begin{equation}
W(t)=\sum_{j\ge 0}\beta_j(t)e_j.
\end{equation}
For all $t\ge 0$, define
\[
\WW(t)=\begin{pmatrix} W(t)\\ 0\end{pmatrix}=\sum_{j\ge 0}\beta_j(t)\begin{pmatrix} e_j\\0\end{pmatrix},
\]
then $\bigl(\WW(t)\bigr)_{t\ge 0}$ is a generalized $\mathcal{Q}$-Wiener process on $\HH$, with the covariance operator
\[
\mathcal{Q}=\begin{pmatrix}I & 0\\ 0 & 0\end{pmatrix}.
\]
Note that almost surely $W(t)\notin H$ and $\WW(t)\notin \HH$ for all $t>0$. However, for all $T\ge 0$, the It\^o stochastic integrals $\int_0^T L(t)\,\dd W(t)$ and $\int_0^T \mathcal{L}(t)\,\dd \WW(t)$ are well-defined $H$-valued and $\HH$-valued random variables respectively, if $\bigl(L(t)\bigr)_{0\le t\le T}$ and $\bigl(\mathcal{L}(t)\bigr)_{0\le t\le T}$ are adapted processes which satisfy
$\sum_{j\ge 0}\int_0^T \E[\|L(t)e_j\|_H^2]\,\dd t<\infty$ and $\sum_{j\ge 0}\int_0^T \E[\|\mathcal{L}(t)\begin{pmatrix}e_{j}\\ 0\end{pmatrix}\|_{\HH}^2]\,\dd t<\infty$ respectively.

Observe that for all $T\ge 0$ one has
\[
\sum_{j\ge 0} \int_{0}^{T}\|e^{t\Delta}e_j\|_H^2\,\dd t=\sum_{j\ge 0} \int_{0}^{T}\|e^{-t\IL}\begin{pmatrix}e_j\\0\end{pmatrix}\|_\HH^2\,\dd t\le T+\sum_{j\ge 1}\lambda_j^{-1}<\infty.
\]
Therefore, for all $t\ge 0$ one can define the $H$-valued random variable $Z(t)$ and the $\HH$-valued random variable $\mathcal{Z}(t)$, called
the stochastic convolutions, by
\begin{equation}\label{eq:Z}
\begin{aligned}
Z(t)&=\int_0^t e^{(t-s)\Delta}\,\dd W(s),\\
\mathcal{Z}(t)&=\int_0^t e^{-(t-s)\IL}\,\dd \WW(s).
\end{aligned}
\end{equation}
The processes $\bigl(Z(t)\bigr)_{t\ge 0}$ and $\bigl(\mathcal{Z}(t)\bigr)_{t\ge 0}$ are interpreted as the mild solutions of the stochastic evolution equations
\begin{align*}
\dd Z(t)&=\Delta Z(t)\,\dd t+\dd W(t),\\
\dd \mathcal{Z}(t)&=-\IL\mathcal{Z}(t)\,\dd t+\dd \WW(t)
\end{align*}
with initial values $Z(0)=0$ and $\mathcal{Z}(0)=0$. Note that $\mathcal{Z}(t)=\begin{pmatrix}Z(t)\\0\end{pmatrix}$ for all $t\ge 0$.

\subsection{The stochastic FitzHugh--Nagumo SPDE system}

In this work, we study numerical schemes for the FitzHugh--Nagumo stochastic system for signal propagation in nerve cells.
This system is written as the stochastic evolution system
\begin{equation}
\label{eq:FhNsystem}
\left\lbrace
\begin{aligned}
&\dd u(t)=\Delta u(t)\,\dd t+(u(t)-u^3(t)-v(t))\,\dd t+ \dd W(t),\\
&\dd v(t)=(\gamma_1 u(t)-\gamma_2v(t)+\beta)\,\dd t,\\
&u(0)=u_0,v(0)=v_0,
\end{aligned}
\right.
\end{equation}
where the unknowns $u(\cdot)=\bigl(u(t)\bigr)_{t\ge 0}$ and $v(\cdot)=\bigl(v(t)\bigr)_{t\ge 0}$ are $H$-valued stochastic processes, and with initial values $u_0\in H$ and $v_0\in H$.
Recall that Neumann boundary conditions are used in the above system.
Using the notation introduced above and setting $X(t)=(u(t),v(t))$ for all $t\ge 0$, the stochastic evolution system~\eqref{eq:FhNsystem} is treated in the sequel as the stochastic evolution equation
\begin{equation}
\label{eq:SEE}
\dd X(t)=-\IL X(t)\,\dd t+F(X(t))\,\dd t+\dd \WW(t),\, X(0)=x_0,
\end{equation}
with the initial value $x_0=(u_0,v_0)\in\HH$. For all $T\in(0,\infty)$, a stochastic process $\bigl(X(t)\bigr)_{0\le t\le T}$ is called a mild solution of~\eqref{eq:SEE} if it has continuous trajectories with values in $\HH$, and if for all $t\in[0,T]$ one has
\begin{equation}\label{eq:SEEmild}
X(t)=e^{-t\IL}x_0+\int_0^t e^{-(t-s)\IL}F(X(s))\,\dd s+\int_0^t e^{-(t-s)\IL}\,\dd \WW(s).
\end{equation}
In the framework presented in this section, the stochastic evolution equation~\eqref{eq:SEE} admits a unique global mild solution,
for any initial value $x_0\in\HH^{2\alpha}\cap \EE$ and for $\alpha\in[0,\frac14)$, see Proposition~\ref{propo:SEE} below.

For simplicity, the initial values $u_0,v_0$, resp. $x_0$, appearing in~\eqref{eq:FhNsystem}, resp.~\eqref{eq:SEE}, are deterministic. It would be straightforward to extend the results below for random initial values which are independent of the Wiener process and are assumed to satisfy appropriate moment bounds, using a conditioning argument.

\section{Splitting schemes}\label{sec:splitt}

The time-step size of the integrators defined below is denoted by $\tau$. Without loss of generality, it is assumed that $\tau\in(0,\tau_0)$,
where $\tau_0$ is an arbitrary positive real number, and that there exists $T\in(0,\infty)$ and $N\in\N$ such that $\tau=T/N$. The notation $t_n=n\tau$ for $n\in\{0,\ldots,N\}$ is used in the sequel. The increments of the Wiener processes $\bigl(W(t)\bigr)_{t\ge 0}$ and $\bigl(\WW(t)\bigr)_{t\ge 0}$ are denoted by
\[
\delta W_n=W(t_{n+1})-W(t_n),\quad \delta \WW_n=\WW(t_{n+1})-\WW(t_n)=\begin{pmatrix} \delta W_n\\0 \end{pmatrix}.
\]
The proposed time integrators for the SPDE~\eqref{eq:SEE} are based on a splitting strategy.
Recall that the main principle of splitting integrators is to decompose the vector field
of the evolution problem in several parts, such that the arising subsystems are exactly (or easily) integrated.
We define these subsystems in Subsection~\ref{sec-subsystem}, then give the definitions of
three splitting schemes in Subsection~\ref{sec-splitting} and state the main results of this article in Subsection~\ref{sec-main}.

\subsection{Solutions of auxiliary subsystems}\label{sec-subsystem}

The construction of the proposed splitting schemes is based
on the combination of exact or approximate solutions of the three subsystems considered below.

$\bullet$ The nonlinear differential equation (considered on the Euclidean space $\R^2$)
\begin{equation}\label{eq:subODE-NL}
\left\lbrace
\begin{aligned}
&\frac{\dd x^{\rm NL}(t)}{\dd t}=F^{\rm NL}(x^{\rm NL}(t)),\\
&x^{\rm NL}(0)=x_0\in\R^2
\end{aligned}
\right.
\end{equation}
admits a unique global solution $\bigl(x^{\rm NL}(t)\bigr)_{t\ge 0}$.
This solution has the following exact expression, see for instance~\cite[Equation~(3)]{MR3986273}: for all $t\ge 0$ and $x_0=(u_0,v_0)\in\R^2$, one has
\begin{equation}\label{eq:phi-NL}
x^{\rm NL}(t)=\phi_t^{\rm NL}(x_0)=\begin{pmatrix} \frac{u_0}{\sqrt{u_0^2+(1-u_0^2)e^{-2t}}} \\ v_0+\beta t\end{pmatrix}.
\end{equation}

$\bullet$ The linear differential equation (considered on the Euclidean space $\R^2$)
\begin{equation}\label{eq:subODE-L}
\left\lbrace
\begin{aligned}
&\frac{\dd x^{\rm L}(t)}{\dd t}=F^{\rm L}(x^{\rm L}(t)),\\
&x^{\rm L}(0)=x_0\in\R^2
\end{aligned}
\right.
\end{equation}
admits a unique global solution $\bigl(x^{\rm L}(t)\bigr)_{t\ge 0}$.
This solution has the following expression: for all $t\ge 0$ and $x_0=(u_0,v_0)\in\R^2$, one has
\begin{equation}\label{eq:phi-L}
x^{\rm L}(t)=\phi_t^{\rm L}(x_0)=e^{tB}x_0.
\end{equation}

$\bullet$ The stochastic evolution equation (considered on the Hilbert space $\HH$)
\begin{equation}\label{eq:subSPDE}
\left\lbrace
\begin{aligned}
&\dd X^{\rm s}(t)=-\IL X^{\rm s}(t)\,\dd t+\dd \WW(t)\\
&X^{\rm s}(0)=x_0\in\HH
\end{aligned}
\right.
\end{equation}
admits a unique global solution $\bigl(X^{\rm s}(t)\bigr)_{t\ge 0}$. This solution
has the following expression: for all $t\ge 0$ and $x_0=(u_0,v_0)\in\HH$, one has
\begin{equation}\label{eq:sol-subSPDE}
X^{\rm s}(t)=e^{-t\IL}x_0+\int_0^t e^{-(t-s)\IL}\dd\WW(s)=\begin{pmatrix} e^{t\Delta}u_0+\int_0^t e^{(t-s)\Delta}\,\dd W(s)\\ v_0\end{pmatrix},
\end{equation}
see~\eqref{eq:Z} for the expression of the stochastic convolution.
For all $n\in\{0,\ldots,N-1\}$,
set $X^{\rm s,exact}_{n}=X^{\rm s}(t_{n})$, then one has the following recursion formula
\begin{equation}\label{eq:sol-subSPDE-local}
X^{\rm s,exact}_{n+1}=e^{-\tau\IL}X^{\rm s,exact}_n+\int_{t_n}^{t_{n+1}}e^{-(t_{n+1}-s)\IL}\,\dd\WW(s)
\end{equation}
recalling the notation $t_n=n\tau$.

Instead of using the exact solution~\eqref{eq:sol-subSPDE} of the stochastic convolution~\eqref{eq:subSPDE}, one can use approximate
solutions $\bigl(X_n^{\rm s,exp}\bigr)_{n\ge 0}=\bigl(u_n^{\rm s,exp},v_n^{\rm s,exp}\bigr)_{n\ge 0}$ and $\bigl(X_n^{\rm s,imp}\bigr)_{n\ge 0}=\bigl(u_n^{\rm s,imp},v_n^{\rm s,imp}\bigr)_{n\ge 0}$ defined by an exponential Euler scheme
and a linear implicit Euler scheme respectively:
\begin{equation}\label{eq:exp-subSPDE}
X_{n+1}^{\rm s,exp}=e^{-\tau\IL}\Bigl(X_n^{\rm s,exp}+\delta\WW_n\Bigr)=\begin{pmatrix} e^{\tau\Delta}\bigl(u_n^{\rm s,exp}+\delta W_n\bigr)\\ v_n^{\rm s,exp}\end{pmatrix},
\end{equation}
and
\begin{equation}\label{eq:imp-subSPDE}
X_{n+1}^{\rm s,imp}=\bigl(I+\tau\IL\bigr)^{-1}\Bigl(X_n^{\rm s,imp}+\delta\WW_n\Bigr)=\begin{pmatrix} (I-\tau\Delta)^{-1}\bigl(u_n^{\rm s,imp}+\delta W_n\bigr)\\ v_n^{\rm s,imp}\end{pmatrix},
\end{equation}
with initial values $X_0^{\rm s,exp}=X_0^{\rm s,imp}=x_0=(u_0,v_0)\in\HH$, $u_0^{\rm s,exp}=u_0^{\rm s,imp}=u_0\in H$ and $v_0^{\rm s,exp}=v_0^{\rm s,imp}=v_0\in H$.

\subsection{Definition of the splitting schemes}\label{sec-splitting}

We are now in position to introduce the three splitting schemes studied in this article.
They are constructed using a Lie--Trotter strategy, where first the subsystems~\eqref{eq:subODE-NL},~\eqref{eq:subODE-L} are solved exactly using the flow maps~\eqref{eq:phi-NL} and~\eqref{eq:phi-L} respectively, and where the subsystem~\eqref{eq:subSPDE} is either solved exactly using~\eqref{eq:sol-subSPDE} or approximately using~\eqref{eq:exp-subSPDE} or~\eqref{eq:imp-subSPDE}.

For the composition of the first two subsystems, define the mapping $\phi_\tau:\R^2\to\R^2$ as follows: for all $\tau\in(0,\tau_0)$, set
\begin{equation}\label{eq:phitau}
\phi_\tau=\phi_\tau^{\rm L}\circ\phi_\tau^{\rm NL}.
\end{equation}
Using the expression~\eqref{eq:sol-subSPDE-local} for the exact solution~\eqref{eq:sol-subSPDE} of~\eqref{eq:subSPDE} leads to the definition of
the following explicit splitting scheme for the stochastic FitzHugh--Nagumo SPDE system~\eqref{eq:FhNsystem}:
\begin{equation}\label{eq:LTexact}
X_{n+1}^{\LTexact}=e^{-\tau\IL}\phi_\tau\bigl(X_n^{\LTexact}\bigr)+\int_{t_n}^{t_{n+1}}e^{-(t_{n+1}-s)\IL}\,\dd\WW(s).
\end{equation}
Using the exponential Euler scheme~\eqref{eq:exp-subSPDE} to approximate the solution of~\eqref{eq:subSPDE} leads to the definition of the following explicit splitting
scheme for~\eqref{eq:FhNsystem}:
\begin{equation}\label{eq:LTexpo}
X_{n+1}^{\LTexpo}=e^{-\tau\IL}\phi_\tau\bigl(X_n^{\LTexpo}\bigr)+e^{-\tau\IL}\delta\WW_n.
\end{equation}

Using the linear implicit Euler scheme~\eqref{eq:imp-subSPDE} to approximate the solution of~\eqref{eq:subSPDE} leads to the definition of the following splitting
scheme for~\eqref{eq:FhNsystem}:
\begin{equation}\label{eq:LTimp}
X_{n+1}^{\LTimp}=(I+\tau\IL)^{-1}\phi_\tau\bigl(X_n^{\LTimp}\bigr)+(I+\tau\IL)^{-1}\delta\WW_n.
\end{equation}
For these three Lie--Trotter splitting schemes~\eqref{eq:LTexact},~\eqref{eq:LTexpo} and~\eqref{eq:LTimp}, the same initial value is imposed:
\[
X_0^{\LTexact}=X_0^{\LTexpo}=X_0^{\LTimp}=x_0\in\HH.
\]

Before proceeding with the statements of the main results, let us give several observations and auxiliary tools.

Observe that the three schemes~\eqref{eq:LTexact},~\eqref{eq:LTexpo} and~\eqref{eq:LTimp} can be written using the single formulation
\begin{equation}\label{eq:LTscheme}
X_{n+1}=\IA_\tau\phi_\tau(X_n)+\int_{t_n}^{t_{n+1}}\IB_{t_{n+1}-s}\,\dd\WW(s)
\end{equation}
which is used in the analysis below.
The expressions of the linear operators $\IA_\tau$ and $\IB_{t_{n+1}-s}$ for each of the three schemes are given by:
$\IA_\tau=e^{-\tau\IL}, \IB_{t_{n+1}-s}=e^{-(t_{n+1}-s)\IL}$ for the scheme~\eqref{eq:LTexact}
$\IA_\tau=\IB_{t_{n+1}-s}=e^{-\tau\IL}$ for the scheme~\eqref{eq:LTexpo},
and $\IA_\tau=\IB_{t_{n+1}-s}=(I+\tau\IL)^{-1}$ for the scheme~\eqref{eq:LTimp}.

For any value $\tau\in(0,\tau_0)$ of the time-step size, introduce the mapping $\psi_{\tau}:\R^2\to\R^2$ defined as follows: for all $x\in\R^2$,
\begin{equation}\label{eq:psitau}
\psi_\tau(x)=\frac{\phi_\tau(x)-x}{\tau}.
\end{equation}
The Lie--Trotter splitting scheme~\eqref{eq:LTscheme} is then written as
\[
X_{n+1}=\IA_\tau X_n+\tau\IA_\tau\psi_\tau(X_n)+\int_{t_n}^{t_{n+1}}\IB_{t_{n+1}-s}\,\dd\WW(s)
\]
and can thus be interpreted as a numerical scheme applied to the auxiliary stochastic evolution equation
\begin{equation}\label{eq:SEEaux}
\dd X_\tau(t)=-\IL X_\tau(t)\,\dd t+\psi_\tau(X_\tau(t))\,\dd t+\dd \WW(t),\, X_\tau(0)=x_0.
\end{equation}
Note that the SPDE~\eqref{eq:SEEaux} is similar to the original problem~\eqref{eq:SEE},
however the nonlinearity $F$ is replaced by the auxiliary mapping $\psi_\tau$.

\subsection{Main results}\label{sec-main}
In this subsection, we state the main results of this article. First, we give moment bounds for
the three splitting schemes~\eqref{eq:LTscheme}, see Theorem~\ref{theo:momentbounds}. Then, we give
strong error estimates, with rate of convergence $1/4$, for the numerical approximations
of the solution of the stochastic FitzHugh--Nagumo SPDE system~\eqref{eq:SEE}, see Theorem~\ref{theo:error}.

\begin{theo}\label{theo:momentbounds}
For all $T\in(0,\infty)$ and $p\in[1,\infty)$, there exists $C_p(T)\in(0,\infty)$ such that for all $x_0\in\EE$ one has
\begin{equation}\label{eq:momentboundsLTscheme}
\underset{\tau\in(0,\tau_0)}\sup~\underset{0\le n\le N}\sup~\E[\|X_n\|_\EE^p]\le C_p(T)\bigl(1+\|x_0\|_\EE^p\bigr),
\end{equation}
where $\bigl(X_n\bigr)_{n\ge 0}$ is given by~\eqref{eq:LTscheme} (with initial value $X_0=x_0$), and where $T=N\tau$ with $N\in\N$.
\end{theo}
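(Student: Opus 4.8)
The plan is to prove the moment bound in the supremum-norm space $\EE$ by carefully tracking the action of one step of the scheme~\eqref{eq:LTscheme} and exploiting the contractive/damping properties of the nonlinear flow $\phi_\tau^{\rm NL}$, following the strategy of~\cite{MR3986273}. The key structural observation is that the $u$-component of $\phi_\tau^{\rm NL}$ is a pointwise map $u\mapsto u/\sqrt{u^2+(1-u^2)e^{-2\tau}}$ which is \emph{bounded by a fixed function of $\tau$ away from the instability}: more precisely, for $|u|\ge 1$ this map is a contraction toward $\pm 1$, and for $|u|\le 1$ its image stays in $[-1,1]$; in all cases one gets a bound of the form $|\phi_\tau^{\rm NL}(u)_1|\le \max(|u|, 1)$ or, better, an estimate like $|\phi_\tau^{\rm NL}(u)_1|^2 \le e^{2\tau}/(1+ (e^{2\tau}-1)\text{(something)})$ that yields $\|\phi_\tau^{\rm NL}(u)\|_E \le C(1+\tau^{-1/2})$ type control, or even the cleaner statement that $|\phi^{\rm NL}_\tau(u)_1|$ never exceeds $\max(1,|u|)$ pointwise. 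The $v$-component is the affine map $v\mapsto v+\beta\tau$, which grows only linearly in $n\tau$.

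First I would isolate a deterministic one-step estimate: writing $X_n = Y_n + \mathcal Z_n$ where $\mathcal Z_n$ is the discretized stochastic convolution (the stochastic integral term accumulated appropriately, i.e.\ $\mathcal Z_{n+1} = \IA_\tau \mathcal Z_n + \int_{t_n}^{t_{n+1}}\IB_{t_{n+1}-s}\dd\WW(s)$, $\mathcal Z_0 = 0$), the ``remainder'' $Y_n$ satisfies $Y_{n+1} = \IA_\tau \phi_\tau(Y_n + \mathcal Z_n) - \IA_\tau \mathcal Z_n$. The point of this splitting is that $\mathcal Z_n$ has moments of all orders in $\EE$ uniformly in $\tau$ and $n\le N$ — this is a standard regularity fact for stochastic convolutions associated with the analytic semigroup $e^{t\Delta}$ (or its exponential/implicit Euler approximations), and I would cite or invoke the auxiliary results of Section~\ref{sec-prel} for this; the $v$-component of $\mathcal Z_n$ is identically zero so only the $H$/$E$ estimates on $Z$-type processes are needed. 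Then it suffices to bound $\E[\|Y_n\|_\EE^p]$ deterministically-in-structure: apply $\phi_\tau$, use the boundedness of its nonlinear component together with the linear growth of the $v$-component and the operator bounds $\vvvert \IA_\tau\vvvert_{\EE\to\EE}\le e^{C\tau}$ (the semigroup $e^{t\Delta}$ is a contraction on $E$, and the $B$-flow on the $v$-component contributes at most $e^{\vvvert B\vvvert \tau}$; similarly for the exponential and implicit Euler variants the operators are bounded on $E$ with constant $1+C\tau$), to obtain a recursion of the shape $\|Y_{n+1}\|_\EE \le (1+C\tau)\|Y_n\|_\EE + C\tau(1 + \|\mathcal Z_n\|_\EE + \|\mathcal Z_n\|_\EE^3) + C\tau$. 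Raising to the power $p$, taking expectations, using the uniform moment bounds on $\|\mathcal Z_n\|_\EE$, and applying the discrete Gr\"onwall lemma over $n\le N = T/\tau$ then yields $\sup_n \E[\|Y_n\|_\EE^p]\le C_p(T)(1+\|x_0\|_\EE^p)$, and combining with the bound on $\mathcal Z_n$ gives the claim for $X_n$.

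The main obstacle is getting the one-step bound for the nonlinear flow in the \emph{supremum norm} in a way that does not lose the contraction: the naive bound $|\phi_\tau^{\rm NL}(u)_1|\le |u|$ is false for $|u|<1$ (the map pushes toward $1$), and more importantly one must handle the interaction between the unboundedly-growing stochastic perturbation $\mathcal Z_n$ and the cubic-type nonlinearity inside $\phi_\tau^{\rm NL}$ — which is why the perturbed-flow splitting $X_n = Y_n + \mathcal Z_n$ is essential, because $\phi_\tau^{\rm NL}$ applied to $Y_n + \mathcal Z_n$ must still be controlled. The clean fact to extract and prove first (likely already available as an auxiliary lemma in Section~\ref{sec-prel}) is the uniform pointwise bound $|\phi_\tau^{\rm NL}(a+b)_1| \le |a| + C(1 + |b| + \tau^{1/2}|a|^{?})$ or, in the form used in~\cite{MR3986273}, something like $\phi_\tau^{\rm NL}(a+b)_1 \cdot \mathrm{sgn}(a) \le |a| + \tau^{-1/2}$-free terms so that the cubic is absorbed by the stabilizing denominator; once this is in hand, everything else is a routine discrete Gr\"onwall argument, carried out uniformly over the three choices of $(\IA_\tau,\IB_\bullet)$ since each satisfies the same $E$-boundedness and the same stochastic-convolution moment bounds.
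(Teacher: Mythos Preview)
Your overall architecture is exactly that of the paper: split $X_n=Y_n+\mathcal Z_n$ with $\mathcal Z_n$ the discretized stochastic convolution~\eqref{eq:Zscheme} (the paper writes $r_n$ for your $Y_n$), invoke uniform $\EE$-moment bounds on $\mathcal Z_n$ (this is Lemma~\ref{lem:momentboundsZscheme}), derive a recursion of the shape $\|Y_{n+1}\|_\EE\le(1+C\tau)\|Y_n\|_\EE+C\tau(1+\|\mathcal Z_n\|_\EE^k)$, and close with discrete Gr\"onwall. So the proposal is on the right track.

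The gap is in how you get the one-step recursion. You correctly identify this as ``the main obstacle'' but the pointwise inequalities you reach for (such as $|\phi_\tau^{\rm NL}(a+b)_1|\le|a|+C(1+|b|+\ldots)$, or the bound $|\phi_\tau^{\rm AC}(u)|\le\max(|u|,1)$) are not what is needed: from $|\phi_\tau^{\rm AC}(a+b)|\le |a+b|+1$ you cannot recover a factor $\tau$ in front of the $\mathcal Z_n$-dependent terms, and without that factor Gr\"onwall blows up. The clean device --- which is exactly what the paper does --- is the algebraic decomposition
\[
\phi_\tau(Y_n+\mathcal Z_n)-\mathcal Z_n=\bigl(\phi_\tau(Y_n+\mathcal Z_n)-\phi_\tau(\mathcal Z_n)\bigr)+\tau\,\psi_\tau(\mathcal Z_n),
\]
followed by two facts from Section~\ref{sec-prel}: the \emph{global} Lipschitz property~\eqref{eq:phitau-Lip} of $\phi_\tau$ with constant $e^{(1+\vvvert B\vvvert)\tau}$ (Proposition~\ref{propo:phitau}) for the first bracket, and the polynomial growth $\|\psi_\tau(x)\|\le C(\tau_0)(1+\|x\|^4)$ (from~\eqref{eq:psitau-localLip} and~\eqref{eq:psitau0}, Proposition~\ref{propo:psitau}) for the second. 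This immediately yields $\|Y_{n+1}\|_\EE\le e^{(1+\vvvert B\vvvert)\tau}\|Y_n\|_\EE+C(\tau_0)\tau(1+\|\mathcal Z_n\|_\EE^4)$, and the rest of your plan goes through. In short: you do not need any bespoke contractivity estimate for $\phi_\tau^{\rm NL}$ acting on sums; the Lipschitz constant $e^{(1+\vvvert B\vvvert)\tau}$ of the full $\phi_\tau$ is the ``fact already available in Section~\ref{sec-prel}'' you were looking for.
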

The proof of this theorem is postponed to Section~\ref{sectionProofs}.
\begin{rem}
The nonlinear mapping $F$ is not globally Lipschitz continuous and has polynomial growth. Therefore,
if one employs a standard implicit-explicit
scheme applied directly to the original SPDE
\[
\mathcal{X}_{n+1}=\IA_\tau \mathcal{X}_n+\tau\IA_\tau F(\mathcal{X}_n)+\int_{t_n}^{t_{n+1}}\IB_{t_{n+1}-s}\,\dd\WW(s)
\]
with $\mathcal{X}_0=x_0$, where the same notation as for the scheme~\eqref{eq:LTscheme} is used, one has
\[
\underset{\tau\in(0,\tau_0)}\sup~\underset{0\le n\le N}\sup~\E[\|\mathcal{X}_n\|_\EE^p]=\underset{\tau\in(0,\tau_0)}\sup~\underset{0\le n\le N}\sup~\E[\|\mathcal{X}_n\|_\HH^p]=\infty,
\]
see for instance~\cite{arnulf} for the stochastic Allen--Cahn equation and~\cite{MR3498982}. 
As a consequence Theorem~\ref{theo:momentbounds} is not a trivial result and illustrates the superiority of the proposed explicit splitting scheme
compared with a crude explicit discretization method.
\end{rem}

We are now in position to state our strong convergence result. Its proof is given in Section~\ref{sectionProofs}.
\begin{theo}\label{theo:error}
For all $T\in(0,\infty)$, $p\in[1,\infty)$ and $\alpha\in[0,\frac14)$, there exists $C_{\alpha,p}(T)\in(0,\infty)$ such that for all $x_0=(u_0,v_0)\in\HH^{2\alpha}\cap\EE$, all $\tau\in(0,\tau_0)$, one has
\begin{equation}\label{eq:error}
\underset{0\le n\le N}\sup~\bigl(\E[\|X(t_n)-X_n\|_\HH^p]\bigr)^{\frac1p}\le C_{\alpha,p}(T)\tau^{\alpha}\bigl(1+\|(-\Delta)^\alpha u_0\|_H^7+\|x_0\|_\EE^7\bigr).
\end{equation}
\end{theo}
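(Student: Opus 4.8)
The plan is to follow the strategy of \cite{MR4019051}, comparing the numerical solution $\bigl(X_n\bigr)_{n\ge0}$ not directly with $X(t_n)$ but with the exact solution $X_\tau(t)$ of the auxiliary SPDE \eqref{eq:SEEaux}, and then separately estimating the difference between $X(t)$ and $X_\tau(t)$. The triangle inequality gives
\[
\bigl(\E[\|X(t_n)-X_n\|_\HH^p]\bigr)^{1/p}\le \bigl(\E[\|X(t_n)-X_\tau(t_n)\|_\HH^p]\bigr)^{1/p}+\bigl(\E[\|X_\tau(t_n)-X_n\|_\HH^p]\bigr)^{1/p}.
\]
For the first term, the point is that $\psi_\tau$ is a good approximation of $F$: one should establish a bound of the form $\|\psi_\tau(x)-F(x)\|\le C\tau(1+\|x\|^{\kappa})$ for some polynomial power $\kappa$ (using the explicit formula \eqref{eq:phi-NL} for $\phi_\tau^{\rm NL}$ and \eqref{eq:phi-L} for $\phi_\tau^{\rm L}$, Taylor-expanding in $\tau$), together with the one-sided Lipschitz property \eqref{eq:onesidedF} inherited by $\psi_\tau$ uniformly in $\tau$. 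Then subtracting the mild formulations of \eqref{eq:SEE} and \eqref{eq:SEEaux}, testing with $X(t)-X_\tau(t)$ and using a Gronwall argument (or directly a fixed-point/mild estimate using the smoothing of $e^{-t\IL}$ only in the $u$-component), combined with a priori moment bounds on both $X$ and $X_\tau$ in $\EE$ — available from Proposition~\ref{propo:SEE} and the analogue for \eqref{eq:SEEaux} — yields a contribution of order $\tau$, which is better than the claimed $\tau^\alpha$.

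For the second, genuinely delicate, term one analyses $\bigl(X_n\bigr)_{n\ge0}$ as a one-step (exponential/implicit) Euler discretization of \eqref{eq:SEEaux}. Write the error $e_n=X_\tau(t_n)-X_n$ and decompose it, as usual, into a sum of: (i) the deterministic time-discretization error of the semigroup applied to the initial datum, $\bigl(e^{-t_n\IL}-\IA_\tau^n\bigr)x_0$, controlled using $\|(-\Delta)^\alpha u_0\|_H$ and the regularity estimates of Section~\ref{sec-prel} (this is where the factor $\tau^\alpha$ and the restriction $\alpha<1/4$ enter, matching the spatial regularity of the stochastic convolution $Z$); (ii) the stochastic-convolution discretization error $\int_0^{t_n}e^{-(t_n-s)\IL}\dd\WW(s)-\sum_k \IB_{t_n-t_k}\delta\WW_k$, handled by Itô isometry and the $H^{2\alpha}$-regularity/Hölder continuity in time of $Z$, again giving order $\tau^\alpha$; (iii) the error coming from replacing $\psi_\tau(X_\tau(s))$ by $\psi_\tau(X_n)$ on each subinterval — this splits further into a "consistency'' piece $\psi_\tau(X_\tau(s))-\psi_\tau(X_\tau(t_k))$ (order $\tau^\alpha$ by temporal Hölder regularity of $X_\tau$, which holds with exponent essentially $\alpha$) and a "stability'' piece $\psi_\tau(X_\tau(t_k))-\psi_\tau(X_k)$, i.e. $\psi_\tau(X_\tau(t_k))-\psi_\tau(X_\tau(t_k)-e_k)$. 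The last piece is where the one-sided Lipschitz bound on $\psi_\tau$ must be used: one cannot simply apply Gronwall with a Lipschitz constant, so I would instead test the error recursion against $e_n$ (or rather handle the deterministic part of $e_n$, after subtracting the already-estimated stochastic terms, à la \cite{MR4019051}), use $\langle e_k,\psi_\tau(X_\tau(t_k))-\psi_\tau(X_\tau(t_k)-e_k)\rangle\le C\|e_k\|^2$, and absorb the remaining cross terms, which carry polynomial factors of $\|X_\tau(t_k)\|_\EE$ and $\|X_k\|_\EE$, using the moment bounds of Theorem~\ref{theo:momentbounds} together with the a priori bounds on $X_\tau$. Summing a discrete Gronwall inequality then closes the estimate; the exponents add up to the power $7$ appearing on the right-hand side of \eqref{eq:error}.

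The main obstacle is step (iii), the stability estimate for the nonlinear part under only a one-sided Lipschitz condition: a naive Gronwall argument fails because of the cubic nonlinearity, so one must carefully separate the process $X_n$ into its stochastic-convolution part (which has known moments but only limited regularity) and a remainder with better regularity, estimate the remainder pathwise in $\EE$ using the one-sided condition, and control all the arising polynomial-in-norm prefactors by Hölder's inequality together with the uniform moment bounds — keeping track of the exact power of the norms, since the FitzHugh--Nagumo system is not parabolic (the $v$-component has no smoothing) so one cannot rely on parabolic regularization for the whole state and several estimates that are standard for stochastic Allen--Cahn have to be redone by hand.
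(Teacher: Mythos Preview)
Your overall strategy is correct and close to the paper's, but you miss the paper's key simplification for the stability step. The paper uses a \emph{three}-way decomposition
\[
X(t_n)-X_n=\bigl(X(t_n)-X_\tau(t_n)\bigr)+\bigl(X_\tau(t_n)-X_n^{\rm aux}\bigr)+\bigl(X_n^{\rm aux}-X_n\bigr),
\]
where $X_n^{\rm aux}$ is obtained from the discrete mild formula~\eqref{eq:mildnum} by replacing $\psi_\tau(X_k)$ with $\psi_\tau(X_\tau(t_k))$. The first term is handled exactly as you describe (continuous one-sided Lipschitz plus Gronwall, order $\tau$). The second term is a pure consistency estimate for a scheme with \emph{known} inputs $X_\tau(t_k)$, and decomposes into your pieces (i), (ii), and the consistency half of (iii); no stability issue arises there.

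The third term $X_n^{\rm aux}-X_n$ is where the paper differs from your plan. You assert that ``one cannot simply apply Gronwall with a Lipschitz constant'' and propose a one-sided Lipschitz / pathwise-splitting argument. In fact the paper \emph{does} apply a direct discrete Gronwall, because the relevant map is $\phi_\tau$, not $\psi_\tau$: by Proposition~\ref{propo:phitau} one has $\|\phi_\tau(x_2)-\phi_\tau(x_1)\|\le e^{(1+\vvvert B\vvvert)\tau}\|x_2-x_1\|$, a \emph{global} Lipschitz bound with constant $1+O(\tau)$. Rewriting the recursion as
\[
X_{n+1}^{\rm aux}-X_{n+1}=\IA_\tau\bigl(\phi_\tau(X_n^{\rm aux})-\phi_\tau(X_n)\bigr)+\tau\IA_\tau\bigl(\psi_\tau(X_\tau(t_n))-\psi_\tau(X_n^{\rm aux})\bigr),
\]
the first summand is bounded by $e^{C\tau}\|X_n^{\rm aux}-X_n\|_\HH$, and the second by $C\tau(1+\|X_\tau(t_n)\|_\EE^3+\|X_n^{\rm aux}\|_\EE^3)\|X_\tau(t_n)-X_n^{\rm aux}\|_\HH$ via the \emph{local} Lipschitz bound~\eqref{eq:psitau-localLip}; the latter quantity is already controlled by the second step. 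A plain discrete Gronwall then closes the estimate, with the polynomial prefactors handled by Cauchy--Schwarz and the moment bounds for $X_\tau$ and $X_n^{\rm aux}$ (not $X_n$). Your proposed one-sided argument at the discrete level is not needed, and would in any case face the usual difficulty that the $\tau^2\|\psi_\tau(X_\tau(t_n))-\psi_\tau(X_n)\|^2$ term carries a factor $(1+\|X_n\|_\EE^3)^2$ multiplying $\|e_n\|^2$, which is awkward to absorb. The introduction of $X_n^{\rm aux}$ is precisely what sidesteps this.
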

The order of convergence $1/4$ obtained in Theorem~\ref{theo:error} is consistent with the temporal H\"older regularity property of the trajectories $t\mapsto X(t)\in\HH$. It is also consistent with the strong convergence rate obtained in~\cite{MR4019051} for the stochastic Allen--Cahn equation. However new arguments are required
to study the FitzHugh--Nagumo system which is not a parabolic SPDE problem, and which has a cubic nonlinearity.

Let us state two of the main auxiliary results which are used in the proofs of the main results.
These propositions are proved in Subsection~\ref{cestlaquonprouve}.
\begin{propo}\label{propo:phitau}
For all $\tau\in(0,\tau_0)$, the mapping $\phi_\tau:\R^2\to\R^2$ defined by~\eqref{eq:phitau} is globally Lipschitz continuous.
In addition, for all $\tau\in(0,\tau_0)$ and all $x_1,x_2\in\R^2$ one has
\begin{equation}\label{eq:phitau-Lip}
\|\phi_\tau(x_2)-\phi_\tau(x_1)\|\le e^{(1+\vvvert B\vvvert)\tau}\|x_2-x_1\|.
\end{equation}
\end{propo}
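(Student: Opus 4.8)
The plan is to prove the global Lipschitz bound for $\phi_\tau = \phi_\tau^{\rm L}\circ\phi_\tau^{\rm NL}$ by handling the two factors separately and composing. The outer map $\phi_\tau^{\rm L}(x)=e^{\tau B}x$ is linear, so it is globally Lipschitz with constant $\vvvert e^{\tau B}\vvvert$, and the standard estimate $\vvvert e^{\tau B}\vvvert \le e^{\tau\vvvert B\vvvert}$ gives the factor $e^{\vvvert B\vvvert\tau}$ in~\eqref{eq:phitau-Lip}. It therefore remains to show that $\phi_\tau^{\rm NL}$ is globally Lipschitz with constant $e^\tau$, after which the chain rule for Lipschitz constants closes the argument.

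For $\phi_\tau^{\rm NL}$, I would exploit the explicit formula~\eqref{eq:phi-NL}. The second coordinate $v_0\mapsto v_0+\beta\tau$ is an isometry, so only the first coordinate $u_0 \mapsto g_\tau(u_0) := u_0/\sqrt{u_0^2 + (1-u_0^2)e^{-2\tau}}$ matters, and since the $v$-component is unaffected it suffices to bound $|g_\tau(u_0^{(2)}) - g_\tau(u_0^{(1)})| \le e^\tau |u_0^{(2)} - u_0^{(1)}|$. The cleanest route is to compute the derivative $g_\tau'(u) = e^{-2\tau}\bigl(u^2 + (1-u^2)e^{-2\tau}\bigr)^{-3/2}$ and show $0 < g_\tau'(u) \le e^\tau$ for all $u\in\R$ (so that $g_\tau$ is monotone and its Lipschitz constant on $\R$ equals $\sup_u |g_\tau'(u)|$). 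Setting $s = e^{-2\tau}\in(0,1)$, this reduces to the scalar inequality $s \le e^\tau \bigl(s + (1-s)u^2\bigr)^{3/2}$; the right side is minimized over $u$ at $u=0$ (since $s<1$ makes the bracket increasing in $u^2$), where it equals $e^\tau s^{3/2} = e^\tau e^{-3\tau} = e^{-2\tau} = s$, so the inequality holds with equality at $u=0$. Alternatively, one can observe directly that $\phi_\tau^{\rm NL}$ is the time-$\tau$ flow of $\dot x = F^{\rm NL}(x)$ and that $F^{\rm NL}$ satisfies the one-sided Lipschitz condition~\eqref{eq:onesidedF} with constant $C=1$ (indeed $\langle x_2-x_1, F^{\rm NL}(x_2)-F^{\rm NL}(x_1)\rangle = \|u_2-u_1\|^2 - (u_2-u_1)(u_2^3-u_1^3) \le \|x_2-x_1\|^2$ because $(u_2-u_1)(u_2^3-u_1^3)\ge 0$); a Gronwall argument on $\tfrac{d}{dt}\|x^{\rm NL}_{(2)}(t)-x^{\rm NL}_{(1)}(t)\|^2 \le 2\|x^{\rm NL}_{(2)}(t)-x^{\rm NL}_{(1)}(t)\|^2$ then yields the factor $e^\tau$ without touching the explicit formula.

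Finally, I would combine the two estimates: for $x_1,x_2\in\R^2$,
\[
\|\phi_\tau(x_2)-\phi_\tau(x_1)\| = \|e^{\tau B}\bigl(\phi_\tau^{\rm NL}(x_2)-\phi_\tau^{\rm NL}(x_1)\bigr)\| \le e^{\vvvert B\vvvert\tau}\|\phi_\tau^{\rm NL}(x_2)-\phi_\tau^{\rm NL}(x_1)\| \le e^{\vvvert B\vvvert\tau}e^{\tau}\|x_2-x_1\|,
\]
which is exactly~\eqref{eq:phitau-Lip}, and global Lipschitz continuity follows since $\tau\in(0,\tau_0)$ is fixed. The only genuinely delicate point is the bound $g_\tau' \le e^\tau$ (equivalently the scalar inequality above), and even that is a one-line optimization once the substitution $s=e^{-2\tau}$ is made; the rest is bookkeeping. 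I would probably present the Gronwall/one-sided-Lipschitz version, since it is shorter and also foreshadows arguments used later for the full scheme, but mention the explicit computation as a sanity check.
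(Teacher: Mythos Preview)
Your proposal is correct and follows essentially the same route as the paper: decompose $\phi_\tau=\phi_\tau^{\rm L}\circ\phi_\tau^{\rm NL}$, bound the linear factor by $\vvvert e^{\tau B}\vvvert\le e^{\tau\vvvert B\vvvert}$, bound the nonlinear factor componentwise using the $e^\tau$-Lipschitz property of the Allen--Cahn flow $u\mapsto g_\tau(u)$, and compose. The only difference is that the paper packages the bound $|\phi_\tau^{\AC}(u_2)-\phi_\tau^{\AC}(u_1)|\le e^\tau|u_2-u_1|$ into a lemma quoted from~\cite{MR3986273}, whereas you supply two self-contained proofs (the explicit derivative computation and the one-sided-Lipschitz/Gronwall argument); both are valid and your derivative calculation, with the minimizer at $u=0$ giving equality, is a clean way to see that $e^\tau$ is in fact sharp.
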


\begin{propo}\label{propo:psitau}
There exists $C(\tau_0)\in(0,\infty)$ such that for all $\tau\in(0,\tau_0)$, the mapping $\psi_\tau:\R^2\to\R^2$ defined by~\eqref{eq:psitau} satisfies the following properties: for all $x_1,x_2\in\R^2$, one has
\begin{align}
&\langle x_2-x_1,\psi_\tau(x_2)-\psi_\tau(x_1)\rangle\le C(\tau_0)\|x_2-x_1\|^2\label{eq:psitau-onesidedLip}\\
&\|\psi_\tau(x_2)-\psi_\tau(x_1)\|\le C(\tau_0)\bigl(1+\|x_1\|^3+\|x_2\|^3\bigr)\|x_2-x_1\|,\label{eq:psitau-localLip}
\end{align}
and for all $x\in\R^2$ one has
\begin{equation}\label{eq:psitau-cv}
\|\psi_\tau(x)-F(x)\|\le C(\tau_0)\tau\bigl(1+\|x\|^5\bigr).
\end{equation}
Finally, one has
\begin{equation}\label{eq:psitau0}
\underset{\tau\in(0,\tau_0)}\sup~\|\psi_\tau(0)\|<\infty.
\end{equation}
\end{propo}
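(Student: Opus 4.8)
The plan is to work directly from the explicit formula~\eqref{eq:phi-NL} for $\phi_t^{\rm NL}$ and the matrix exponential $e^{tB}$, since $\phi_\tau=\phi_\tau^{\rm L}\circ\phi_\tau^{\rm NL}$ and $\psi_\tau(x)=\tau^{-1}(\phi_\tau(x)-x)$. First I would record that $\phi_\tau^{\rm NL}(u,v)=\bigl(g_\tau(u),v+\beta\tau\bigr)$ where $g_\tau(u)=u\bigl(u^2+(1-u^2)e^{-2\tau}\bigr)^{-1/2}$, and that $e^{\tau B}=I+\tau B+O(\tau^2)$ uniformly for $\tau\in(0,\tau_0)$, so that $\phi_\tau(x)=e^{\tau B}\phi_\tau^{\rm NL}(x)$. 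The key analytic input is a good understanding of $g_\tau$: one checks that $g_\tau$ is smooth on $\R$ (the denominator $u^2+(1-u^2)e^{-2\tau}=e^{-2\tau}+u^2(1-e^{-2\tau})$ stays bounded below by $e^{-2\tau_0}>0$), that $g_\tau(u)-u=\frac{u-u^3}{\cdots}\cdot\bigl(1-e^{-2\tau}\bigr)/(\cdots)$ has the form $\tau(u-u^3)+\tau^2 r_\tau(u)$ with $r_\tau$ a polynomial-growth remainder, and that $g_\tau'(u)$ is bounded on compacts with polynomial control in $u$. A Taylor expansion of $x\mapsto e^{-2\tau}$ and of the square root in powers of $\tau$, keeping track of the $u$-dependence of the coefficients, gives all of this; this is the computational heart of the argument, essentially a refinement of~\cite[Equation~(3)]{MR3986273}.

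For~\eqref{eq:psitau-cv}, I would write $\psi_\tau(x)-F(x)=\tau^{-1}\bigl(\phi_\tau(x)-x-\tau F(x)\bigr)$ and expand $\phi_\tau(x)=e^{\tau B}\phi_\tau^{\rm NL}(x)$ to first order in $\tau$: the $O(1)$ term is $x$, the $O(\tau)$ term is $F^{\rm NL}(x)+Bx=F(x)$ by~\eqref{eq:F}, and the remainder is $O(\tau^2)$ with a coefficient that is polynomial in $x$; the worst term comes from composing the cubic $g_\tau$ with the linear map, producing at most a quintic, which yields the $1+\|x\|^5$ bound. For the one-sided Lipschitz estimate~\eqref{eq:psitau-onesidedLip}, the cleanest route is to relate $\psi_\tau$ back to $F^{\rm NL}$ and $F^{\rm L}$: since $\psi_\tau(x)=\tau^{-1}\bigl(e^{\tau B}\phi_\tau^{\rm NL}(x)-x\bigr)$, I would decompose this as (a contribution essentially equal to $F^{\rm NL}$, for which one exploits the one-sided Lipschitz bound~\eqref{eq:onesidedF} on $F^{\rm NL}$ together with monotonicity of $g_\tau$) plus (a globally Lipschitz contribution coming from $B$ and from the $\tau$-dependent corrections). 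Concretely, one shows $\langle x_2-x_1,\phi_\tau^{\rm NL}(x_2)-\phi_\tau^{\rm NL}(x_1)\rangle\le(1+C\tau)\|x_2-x_1\|^2$ using that the $u$-component map $g_\tau$ satisfies $\langle u_2-u_1, g_\tau(u_2)-g_\tau(u_1)\rangle\le(1+C\tau)\|u_2-u_1\|^2$ (which follows because $g_\tau'(u)=e^{-2\tau}(\cdots)^{-3/2}\le e^{-2\tau}\cdot e^{3\tau_0}$ is bounded, in fact $g_\tau$ is $1$-Lipschitz-like for the relevant range — more carefully $0\le g_\tau'(u)\le e^{(C\tau)}$ can be read off the formula), and the $v$-component is an affine shift. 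Pre-composing with $e^{\tau B}=I+O(\tau)$ and dividing by $\tau$ then gives~\eqref{eq:psitau-onesidedLip} with $C(\tau_0)$ depending only on $\vvvert B\vvvert$ and $\tau_0$.

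For the local Lipschitz bound~\eqref{eq:psitau-localLip}, I would differentiate: $\psi_\tau$ is smooth and $D\psi_\tau(x)=\tau^{-1}\bigl(D\phi_\tau(x)-I\bigr)=\tau^{-1}\bigl(e^{\tau B}D\phi_\tau^{\rm NL}(x)-I\bigr)$; using $e^{\tau B}=I+\tau B+O(\tau^2)$ and $D\phi_\tau^{\rm NL}(x)=I+\tau DF^{\rm NL}(x)+O(\tau^2)$ with the $O(\tau^2)$ coefficient of polynomial (degree $\le 2$ in each derivative, hence contributing $\|x\|^3$ after one more $x$) growth, the $\tau^{-1}$ cancels and one gets $\vvvert D\psi_\tau(x)\vvvert\le C(\tau_0)(1+\|x\|^3)$; integrating along the segment from $x_1$ to $x_2$ and bounding $\|x_1+t(x_2-x_1)\|^3\le C(\|x_1\|^3+\|x_2\|^3)$ yields~\eqref{eq:psitau-localLip}. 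Finally~\eqref{eq:psitau0} is immediate: $\phi_\tau^{\rm NL}(0)=(0,\beta\tau)$, so $\phi_\tau(0)=e^{\tau B}(0,\beta\tau)^{\top}$ and $\psi_\tau(0)=\tau^{-1}\bigl(e^{\tau B}(0,\beta\tau)^{\top}-0\bigr)=\beta\,\tau^{-1}e^{\tau B}(0,\tau)^{\top}$, which is bounded by $|\beta|\,e^{\vvvert B\vvvert\tau_0}$ uniformly in $\tau\in(0,\tau_0)$. The main obstacle is the bookkeeping in the Taylor expansion of $g_\tau$ and its derivatives: one must verify that every $\tau^{-1}$ is absorbed by a genuine factor of $\tau$ and that the residual $x$-dependence is exactly polynomial of the claimed degrees ($5$ for~\eqref{eq:psitau-cv}, $3$ for~\eqref{eq:psitau-localLip}), uniformly for $\tau\in(0,\tau_0)$; once the elementary estimates $e^{-2\tau_0}\le e^{-2\tau}+u^2(1-e^{-2\tau})$ and $1-e^{-2\tau}\le 2\tau$ are in hand, this is routine but must be done carefully.
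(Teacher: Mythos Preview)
Your proposal is correct and follows essentially the same route as the paper: both arguments exploit the factorization $\phi_\tau=e^{\tau B}\phi_\tau^{\rm NL}$ and reduce everything to properties of the scalar Allen--Cahn flow $g_\tau=\phi_\tau^{\rm AC}$. The paper organizes this more cleanly by first isolating the algebraic identity
\[
\psi_\tau(x)=\psi_\tau^{\rm L}\bigl(\phi_\tau^{\rm NL}(x)\bigr)+\psi_\tau^{\rm NL}(x),
\qquad
\psi_\tau^{\rm L}=\tfrac{e^{\tau B}-I}{\tau},\quad
\psi_\tau^{\rm NL}(x)=\begin{pmatrix}\psi_\tau^{\rm AC}(u)\\ \beta\end{pmatrix},
\]
and then quoting the needed scalar estimates on $\phi_\tau^{\rm AC}$ and $\psi_\tau^{\rm AC}$ (your $g_\tau$ and $(g_\tau-\mathrm{id})/\tau$) as a black-box lemma from~\cite{MR3986273}, rather than re-deriving them by Taylor expansion. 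Once that identity is in hand, each of~\eqref{eq:psitau-onesidedLip}--\eqref{eq:psitau0} follows in a few lines: for~\eqref{eq:psitau-onesidedLip} one uses that $\psi_\tau^{\rm L}$ is globally Lipschitz (uniformly in $\tau$) and the one-sided Lipschitz bound for $\psi_\tau^{\rm AC}$; for~\eqref{eq:psitau-localLip} and~\eqref{eq:psitau-cv} one uses the corresponding increment and consistency bounds for $\psi_\tau^{\rm AC}$. Your differentiation-based argument for~\eqref{eq:psitau-localLip} and your direct Taylor expansion for~\eqref{eq:psitau-cv} reproduce exactly those scalar lemmas inline; the bookkeeping you flag as the ``main obstacle'' is precisely what~\cite{MR3986273} already packages. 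Your computation of $\psi_\tau(0)$ is identical to the paper's.
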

The inequality~\eqref{eq:psitau-onesidedLip} states that $\psi_\tau$ satisfies a one-sided Lipschitz continuity property which is uniform with respect to $\tau\in(0,\tau_0)$.
This is similar to the property~\eqref{eq:onesidedF} satisfied by $F$. It is straightforward to check that $\psi_\tau$ is in fact globally Lipschitz continuous
for any fixed $\tau\in(0,\tau_0)$, however this property does not hold uniformly with respect to $\tau\in(0,\tau_0)$. Instead, one has the one-sided Lipschitz continuity property~\eqref{eq:psitau-onesidedLip} and the  local Lipschitz continuity property~\eqref{eq:psitau-localLip} which are both uniform with respect to $\tau\in(0,\tau_0)$.

\section{Preliminary results}\label{sec-prel}
In this section we state and prove several results which are required for the analysis of the three splitting schemes of type~\eqref{eq:LTscheme}. In particular, we give properties of the semigroup (Proposition~\ref{propSemiGrp}), we then prove the properties of the auxiliary mappings $\phi_\tau$ (Proposition~\ref{propo:phitau}) and $\psi_\tau$ (Proposition~\ref{propo:psitau}), and finally we study the well-posedness and moment bounds
for the mild solution of the considered SPDE.

\subsection{Properties of the semigroup}
In this subsection, we study properties of the semigroup generated by the linear operator $\IL$ in the stochastic FitzHugh--Nagumo system~\eqref{eq:SEE}.
In addition, estimates for the operator $(I+\tau\IL)^{-1}$ used in the semi-linear splitting schemes~\eqref{eq:imp-subSPDE} and~\eqref{eq:LTimp} are also provided.

\begin{propo}\label{propSemiGrp}
The semigroup $\bigl(e^{-t\IL}\bigr)_{t\ge 0}$ defined by~\eqref{eq:expoIL} satisfies the following properties:

\noindent $\bullet$ For all $t\ge 0$, $e^{-t\IL}$ is a bounded linear operator from $\HH$ to $\HH$ and from $\EE$ to $\EE$. In addition, for all $t\ge 0$ one has
\begin{equation}\label{eq:propSemiGrp-normes}
\underset{x\in\HH\setminus\{0\}}\sup~\frac{\|e^{-t\IL}x\|_\HH}{\|x\|_\HH}=1,\quad \underset{x\in\EE\setminus\{0\}}\sup~\frac{\|e^{-t\IL}x\|_\EE}{\|x\|_\EE}=1.
\end{equation}

\noindent $\bullet$ Smoothing property. For all $\alpha\in[0,\infty)$, there exists a real number $C_\alpha\in(0,\infty)$ such that, for all $(u,v)\in\HH$ and all $t\in(0,\infty)$, one has
\begin{equation}\label{eq:propSemiGrp-smoothing}
\|e^{-t\IL}\begin{pmatrix}(-\Delta)^{\alpha}u\\ v\end{pmatrix}\|_{\HH}\le C_\alpha\min(1,t)^{-\alpha}\|(u,v)\|_{\HH}.
\end{equation}
\noindent $\bullet$ Temporal regularity.  For all $\mu,\nu\ge 0$ with $\mu+\nu\le 1$, there exists a real number $C_{\mu,\nu}\in(0,\infty)$ such that, for all $x=(u,v)\in \HH^{2\nu}$ and all $t_1,t_2\in(0,\infty)$, one has
\begin{equation}\label{eq:propSemiGrp-regul}
\|e^{-t_2\IL}x-e^{-t_1\IL}x\|_{\HH}\le C_{\mu,\nu}\frac{|t_2-t_1|^{\mu+\nu}}{\min(t_2,t_1)^\mu}\|(-\Delta)^\nu u\|_H.
\end{equation}
\end{propo}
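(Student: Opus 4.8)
The statement to prove is Proposition~\ref{propSemiGrp}, which collects three properties of the semigroup $(e^{-t\IL})_{t\ge 0}$: boundedness with operator norm exactly $1$ on $\HH$ and $\EE$, a smoothing estimate, and a temporal regularity estimate. The plan is to reduce everything to the corresponding well-known facts about the heat semigroup $(e^{t\Delta})_{t\ge 0}$ on $H = L^2(0,1)$ (and on $E = \mathcal{C}^0([0,1])$), exploiting the block structure $e^{-t\IL}(u,v) = (e^{t\Delta}u, v)$ from~\eqref{eq:expoIL}, in which the $v$-component is simply frozen.

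\textbf{Norm bounds.} For the $\HH$ estimate, I would use Parseval: since $\|e^{t\Delta}u\|_H^2 = \sum_{j\ge 0} e^{-2t\lambda_j}\langle u,e_j\rangle_H^2 \le \sum_{j\ge 0}\langle u,e_j\rangle_H^2 = \|u\|_H^2$ (all $\lambda_j\ge 0$), one gets $\|e^{-t\IL}(u,v)\|_\HH^2 = \|e^{t\Delta}u\|_H^2 + \|v\|_H^2 \le \|u\|_H^2 + \|v\|_H^2 = \|(u,v)\|_\HH^2$, so the supremum is $\le 1$; equality follows by testing on $x = (0,v)$ with $v\neq 0$, or on $x=(e_0,0)$ since $\lambda_0 = 0$. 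For the $\EE$ estimate, $e^{t\Delta}$ is the Neumann heat semigroup, which is a contraction on $\mathcal{C}^0([0,1])$ (it has a nonnegative kernel of total mass $1$, and it preserves constants: $e^{t\Delta}e_0 = e_0$); hence $\|e^{t\Delta}u\|_E \le \|u\|_E$, and $\|e^{-t\IL}(u,v)\|_\EE = \max(\|e^{t\Delta}u\|_E,\|v\|_E) \le \max(\|u\|_E,\|v\|_E)$, with equality again attained on constants.

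\textbf{Smoothing and temporal regularity.} Since the second component of $e^{-t\IL}\begin{pmatrix}(-\Delta)^\alpha u\\ v\end{pmatrix}$ is $v$, both estimates reduce to statements purely about $e^{t\Delta}(-\Delta)^\alpha$ on $H$. For~\eqref{eq:propSemiGrp-smoothing}, I would bound $\|e^{t\Delta}(-\Delta)^\alpha u\|_H^2 = \sum_{j\ge 0} e^{-2t\lambda_j}\lambda_j^{2\alpha}\langle u,e_j\rangle_H^2$ using the elementary inequality $\sup_{x\ge 0} x^{2\alpha} e^{-2tx} = (\alpha/(te))^{2\alpha} \le C_\alpha t^{-2\alpha}$ for $t\in(0,1]$, while for $t\ge 1$ one just uses $\lambda_j^{2\alpha}e^{-2t\lambda_j}\le \lambda_j^{2\alpha}e^{-2\lambda_j}\le C_\alpha$ (bounded uniformly in $j$ since $\lambda_0=0$ contributes nothing and $\lambda_j\to\infty$); combining, $\|e^{t\Delta}(-\Delta)^\alpha u\|_H \le C_\alpha \min(1,t)^{-\alpha}\|u\|_H \le C_\alpha\min(1,t)^{-\alpha}\|(u,v)\|_\HH$. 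For~\eqref{eq:propSemiGrp-regul}, writing $e^{-t_2\IL}x - e^{-t_1\IL}x = (e^{t_2\Delta}u - e^{t_1\Delta}u, 0)$ and assuming WLOG $t_1 < t_2$, I would use $e^{t_2\Delta}u - e^{t_1\Delta}u = e^{t_1\Delta}(e^{(t_2-t_1)\Delta} - I)(-\Delta)^{-\nu}(-\Delta)^\nu u$, then combine the smoothing bound $\|e^{t_1\Delta}(-\Delta)^\mu\|_{H\to H}\le C_\mu \min(1,t_1)^{-\mu}$ with the standard estimate $\|(e^{s\Delta}-I)(-\Delta)^{-(\mu+\nu)}\|_{H\to H}\le C_{\mu,\nu}s^{\mu+\nu}$ (valid since $\mu+\nu\le 1$, via $\sup_{x>0}\frac{1-e^{-sx}}{x^{\mu+\nu}}\le s^{\mu+\nu}$ using $1-e^{-y}\le y^{\mu+\nu}$ for $y\ge0$ when $\mu+\nu\le1$); this yields $\|e^{t_2\Delta}u - e^{t_1\Delta}u\|_H \le C_{\mu,\nu}\min(1,t_1)^{-\mu}(t_2-t_1)^{\mu+\nu}\|(-\Delta)^\nu u\|_H$, which is the claim (with $\min(t_1,t_2)=t_1$).

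\textbf{Main obstacle.} Nothing here is deep — it is all spectral calculus on the Neumann Laplacian plus the kernel-positivity argument for the sup-norm contraction. The one point requiring a little care is the $\EE$-contraction claim in the first bullet: one must invoke that the Neumann heat kernel on $[0,1]$ is nonnegative with unit mass (equivalently that $(e^{t\Delta})_{t\ge0}$ is positivity-preserving and fixes constants), which is classical but should be cited rather than rederived; alternatively one can note $e^{t\Delta}$ is the transition semigroup of reflected Brownian motion on $[0,1]$. A secondary bookkeeping point is tracking the $\min(1,t)$ versus $\min(t_1,t_2)$ normalizations and the roles of $\mu,\nu$ so that the exponents in~\eqref{eq:propSemiGrp-regul} come out exactly as stated, but these are routine once the two building-block inequalities above are in place.
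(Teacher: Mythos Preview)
Your proposal is correct and follows essentially the same route as the paper: both reduce everything to the block structure $e^{-t\IL}(u,v)=(e^{t\Delta}u,v)$, handle the $\HH$-contraction by Parseval and the $\EE$-contraction via positivity of the Neumann heat kernel, derive the smoothing estimate from $\sup_{\xi>0}\xi^{2\alpha}e^{-2\xi}<\infty$, and prove temporal regularity by factoring $e^{t_2\Delta}-e^{t_1\Delta}=e^{t_1\Delta}(e^{(t_2-t_1)\Delta}-I)$ together with $1-e^{-y}\le y^{\mu+\nu}$ for $\mu+\nu\le 1$. The only cosmetic difference is that the paper carries out the temporal-regularity computation directly in Fourier coefficients rather than as a product of two operator-norm bounds, and obtains $t_1^{-\mu}$ rather than your $\min(1,t_1)^{-\mu}$; as you already note, this is pure bookkeeping and is fixed by using the sharper heat-semigroup smoothing bound $\|e^{t\Delta}(-\Delta)^\mu\|_{H\to H}\le C_\mu t^{-\mu}$ (valid for all $t>0$, since the $\min(1,t)$ in~\eqref{eq:propSemiGrp-smoothing} only appears to accommodate the $v$-component).
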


\begin{proof}
$\bullet$ On the one hand, since the eigenvalues $\bigl(\lambda_j\bigr)_{j\ge 0}$ of $-\Delta$ are nonnegative, it is straightforward to see that for all $x=(u,v)\in \HH$ and $t\ge 0$ one has $e^{-t\IL}x\in\HH$, and
\[
\|e^{-t\IL}x\|_\HH^2=\|e^{t\Delta}u\|_H^2+\|v\|_H^2\le \|u\|_{H}^2+\|v\|_{H}^2=\|x\|_{\HH}^2.
\]
This proves that $e^{-t\IL}$ is a bounded linear operator from $\HH$ to $\HH$ for all $t\ge 0$, and that
\[
\underset{x\in\HH\setminus\{0\}}\sup~\frac{\|e^{-t\IL}x\|_\HH}{\|x\|_\HH}\le 1.
\]
On the other hand, using the formula for the Green function of the heat equation with homogeneous Neumann boundary conditions,
the semigroup $\bigl(e^{t\Delta}\bigr)_{t\ge 0}$ defined by~\eqref{eq:expoDelta} satisfies the following properties:
for all $t\ge 0$ and $u\in E$,
one has $e^{t\Delta}u\in E$ and $\|e^{t\Delta}u\|_{E}\le \|u\|_E$. As a consequence, for all $x=(u,v)\in \EE$, one has $e^{-t\IL}x=(e^{t\Delta}u,v)\in\EE$ and
\[
\|e^{-t\IL}x\|_\EE=\max\bigl(\|e^{t\Delta}u\|_E,\|v\|_E\bigr)\le \max\bigl(\|u\|_E,\|v\|_E\bigr)=\|x\|_\EE.
\]
To conclude the proof of~\eqref{eq:propSemiGrp-normes}, it suffices to check that for $x=(0,v)$ and all $t\ge 0$ one has $e^{-t\IL}x=x$.

$\bullet$ The smoothing property~\eqref{eq:propSemiGrp-smoothing} is a straightforward consequence of the smoothing property for the semigroup $\bigl(e^{t\Delta}\bigr)_{t\ge 0}$: for all $\alpha\in[0,\infty)$, $t\ge 0$ and $u\in H$,
one has (recall that $\lambda_0=0$)
\[
\|e^{t\Delta}(-\Delta)^\alpha u\|_H^2=\sum_{j\ge 1}e^{-2t\lambda_j}\lambda_j^{2\alpha}\langle u,e_j\rangle_H^2\le \underset{\xi\in(0,\infty)}\sup~\bigl(\xi^{2\alpha}e^{-2\xi}\bigr)~t^{-2\alpha}\|u\|_H^2.
\]
As a consequence, for all $\alpha\in[0,\infty)$, $t\ge 0$ and $x=(u,v)\in\HH$, one has
\[
\|e^{-t\IL}\begin{pmatrix}(-\Delta)^{\alpha}u\\ v\end{pmatrix}\|_{\HH}^2=\|e^{t\Delta}(-\Delta)^{\alpha}u\|_H^2+\|v\|_H^2\le C_\alpha^2 t^{-2\alpha}\|u\|_H^2+\|v\|_H^2\le C_\alpha^2 \min(1,t)^{-2\alpha}\|x\|_{\HH}^2.
\]

$\bullet$ The regularity property~\eqref{eq:propSemiGrp-regul} is a straightforward consequence of the following regularity property for the semigroup $\bigl(e^{t\Delta}\bigr)_{t\ge 0}$: for all $\mu,\nu\in[0,1]$ with $\mu+\nu\le 1$, $0\le t_1\le t_2$ and $u\in H^{2\nu}$, one has
\begin{align*}
\|e^{t_2\Delta}u-e^{t_1\Delta}u\|^2_H&=\|(e^{(t_2-t_1)\Delta}-I)e^{t_1\Delta}u\|^2_H\\
&=\sum_{j\ge 1}\bigl(e^{-(t_2-t_1)\lambda_j}-1\bigr)^2e^{-2t_1\lambda_j}\langle u,e_j\rangle_H^2\\
&\le 2^{2(\mu+\nu)}(t_2-t_1)^{2(\mu+\nu)}\sum_{j\ge 1}\lambda_j^{2(\mu+\nu)}e^{-2t_1\lambda_j}\langle u,e_j\rangle_H^2\\
&\le 2^{2(\mu+\nu)}\underset{\xi\in(0,\infty)}\sup~\bigl(\xi^{2\mu}e^{-2\xi}\bigr)~\frac{(t_2-t_1)^{2(\mu+\nu)}}{t_1^{2\mu}}\sum_{j\ge 1}\lambda_j^{2\nu}\langle u,e_j\rangle_H^2\\
&\le 2^{2(\mu+\nu)}\underset{\xi\in(0,\infty)}\sup~\bigl(\xi^{2\mu}e^{-2\xi}\bigr)~\frac{(t_2-t_1)^{2(\mu+\nu)}}{t_1^{2\mu}}\|(-\Delta)^\nu u\|_H^2.
\end{align*}
As a consequence, for all $\mu,\nu\in[0,1]$ with $\mu+\nu\le 1$, $0\le t_1\le t_2$ and $x=(u,v)\in H^{2\nu}\times H$, one has
\[
\|e^{-t_2\IL}x-e^{-t_1\IL}x\|_{\mathcal{H}}=\|e^{t_2\Delta}u-e^{t_1\Delta}u\|_H\le C_{\mu,\nu}
\frac{|t_2-t_1|^{\mu+\nu}}{t_1^\mu}\|(-\Delta)^\nu u\|_H.
\]
The proof of Proposition~\ref{propSemiGrp} is thus completed.
\end{proof}

In the sequel, the following properties are also used for the analysis of the splitting scheme~\eqref{eq:LTimp} for which a linear implicit Euler method is used for the approximation~\eqref{eq:imp-subSPDE} of the stochastic convolution: for all $t\ge 0$, $(I+t\IL)^{-1}$ is a bounded linear operator from $\HH$ to $\HH$ and from $\EE$ to $\EE$, and one has
\begin{equation}\label{eq:SemiGrp-normes-imp}
\underset{x\in\HH\setminus\{0\}}\sup~\frac{\|(I+t\IL)^{-1}x\|_\HH}{\|x\|_\HH}=1,\quad \underset{x\in\EE\setminus\{0\}}\sup~\frac{\|(I+t\IL)^{-1}x\|_\EE}{\|x\|_\EE}=1.
\end{equation}
The proof of the inequality~\eqref{eq:SemiGrp-normes-imp} is straightforward. Indeed, for all $x\in\HH$ or $x\in\EE$, and all $t\ge 0$, one has
\[
(I+t\IL)^{-1}x=\int_0^\infty e^{-(I+t\IL)s}x\,\dd s.
\]
Using~\eqref{eq:propSemiGrp-normes}, one then obtains the inequalities
\begin{align*}
&\|(I+t\IL)^{-1}x\|_\HH\le \int_{0}^{\infty}e^{-s}\|e^{-ts\IL}x\|_\HH\,\dd s\le \int_{0}^{\infty}e^{-s}\,\dd s\|x\|_\HH=\|x\|_\HH\\
&\|(I+t\IL)^{-1}x\|_\EE\le \int_{0}^{\infty}e^{-s}\|e^{-ts\IL}x\|_\EE\,\dd s\le \int_{0}^{\infty}e^{-s}\,\dd s\|x\|_\EE=\|x\|_\EE.
\end{align*}
Like in the proof of~\eqref{eq:propSemiGrp-normes}, choosing $x=(0,v)$ gives $(I+t\IL)^{-1}x=x$ for all $t\ge 0$, and thus concludes the proof of~\eqref{eq:SemiGrp-normes-imp}.

\subsection{Proofs of Propositions~\ref{propo:phitau} and~\ref{propo:psitau}}\label{cestlaquonprouve}

In order to prove Propositions~\ref{propo:phitau} and~\ref{propo:psitau} which state properties of the mappings $\phi_\tau:\R^2\to\R^2$ and $\psi_\tau:\R^2\to\R^2$
defined by~\eqref{eq:phitau}~and~\eqref{eq:psitau}, it is convenient to introduce the auxiliary mappings $\phi_t^{\AC}:\R\to\R$ and $\psi_t^{\AC}:\R\to\R$, defined as follows: for all $t\in(0,\infty)$ and $u\in\R$, set
\begin{equation}\label{eq:phipsiAC}
\phi_t^{\AC}(u)=\frac{u}{\sqrt{u^2+(1-u^2)e^{-2t}}},\quad \psi_t^{\AC}(u)=\frac{\phi_t^{\AC}(u)-u}{t}.
\end{equation}
The mapping $\phi_t^{\AC}$ is the flow map associated with the nonlinear differential equation, see the subsystem~\eqref{eq:subODE-NL},
\[
\frac{\dd u^{\AC}(t)}{\dd t}=u^{\AC}(t)-(u^{\AC}(t))^3,
\]
meaning that $u^{\AC}(t)=\phi_t^{\AC}(u^{\AC}(0))$ for all $t\ge 0$. The properties of the mappings $\phi_\tau^{\AC}$ and $\psi_\tau^{\AC}$ stated in Lemma~\ref{lem:phipsiAC} are given by~\cite[Lemma~3.1--3.4]{MR3986273}.
\begin{lemma}\label{lem:phipsiAC}
There exists $C(\tau_0)\in(0,\infty)$ such that for all $\tau\in(0,\tau_0)$, the mappings $\phi_\tau^{\AC}:\R\to\R$ and $\psi_\tau^{\AC}:\R\to\R$ satisfy the following properties:

$\bullet$ For all $\tau\in(0,\tau_0)$ and $u_1,u_2\in\R$, one has
\begin{equation}\label{eq:lemphipsiAC-Lip-phi}
|\phi_\tau^{\AC}(u_2)-\phi_\tau^{\AC}(u_1)|\le e^{\tau}|u_2-u_1|.
\end{equation}
$\bullet$ For all $\tau\in(0,\tau_0)$ and $u_1,u_2\in\R$, one has
\begin{align}
&\bigl(u_2-u_1\bigr)\bigl(\psi_\tau^{\AC}(u_2)-\psi_\tau^{\AC}(u_1)\bigr)\le C(\tau_0)|u_2-u_1|^2, \label{eq:lemphipsiAC-onesidedLip-psi}\\
&|\psi_\tau^{\AC}(u_2)-\psi_\tau^{\AC}(u_1)|\le C(\tau_0)\bigl(1+|u_1|^3+|u_2|^3\bigr)|u_2-u_1|,\label{eq:lemphipsiAC-localLip-psi}
\end{align}
and for all $\tau\in(0,\tau_0)$ and $u\in\R$, one has
\begin{equation}\label{eq:lemphipsiAC-cv-psi}
|\psi_\tau(u)-(u-u^3)|\le C(\tau_0)\tau\bigl(1+|u|^5\bigr).
\end{equation}
\end{lemma}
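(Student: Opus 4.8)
The plan is to reduce everything to the scalar autonomous ODE $\dot u = g(u)$ with $g(u)=u-u^3$, whose flow is $\phi_t^{\AC}$, and to use two elementary facts. First, $g$ is one-sided Lipschitz with constant $1$: for $u_1,u_2\in\R$ one has $\bigl(g(u_2)-g(u_1)\bigr)(u_2-u_1)=(u_2-u_1)^2-(u_1^2+u_1u_2+u_2^2)(u_2-u_1)^2\le(u_2-u_1)^2$. Second, the intervals $[-1,1]$, $[1,+\infty)$ and $(-\infty,-1]$ are positively invariant for the flow and $t\mapsto|\phi_t^{\AC}(u)|$ is nonincreasing whenever $|u|\ge1$; this yields the $\tau$-independent a priori bound $|\phi_t^{\AC}(u)|\le\max(1,|u|)\le 1+|u|$ for all $t\ge 0$ (which can also be read off directly from~\eqref{eq:phipsiAC}). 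I will also use throughout the representation $\phi_t^{\AC}(u)-u=\int_0^t g(\phi_s^{\AC}(u))\,\dd s$, equivalently $\psi_t^{\AC}(u)=\frac1t\int_0^t g(\phi_s^{\AC}(u))\,\dd s$, i.e.\ $\psi_t^{\AC}$ is the time-average of $g$ along the trajectory.

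I would start with the flow estimate~\eqref{eq:lemphipsiAC-Lip-phi}. Setting $D(t)=\phi_t^{\AC}(u_2)-\phi_t^{\AC}(u_1)$ and differentiating, $\frac{\dd}{\dd t}|D(t)|^2=2\bigl(g(\phi_t^{\AC}(u_2))-g(\phi_t^{\AC}(u_1))\bigr)D(t)\le2|D(t)|^2$ by the one-sided Lipschitz bound, so a Gronwall argument gives $|D(\tau)|\le e^{\tau}|u_2-u_1|$. The one-sided Lipschitz property~\eqref{eq:lemphipsiAC-onesidedLip-psi} of $\psi_\tau^{\AC}$ is then immediate: $(u_2-u_1)\bigl(\psi_\tau^{\AC}(u_2)-\psi_\tau^{\AC}(u_1)\bigr)=\frac1\tau\bigl[(u_2-u_1)(\phi_\tau^{\AC}(u_2)-\phi_\tau^{\AC}(u_1))-(u_2-u_1)^2\bigr]\le\frac{e^{\tau}-1}{\tau}|u_2-u_1|^2$ by Cauchy--Schwarz and~\eqref{eq:lemphipsiAC-Lip-phi}, and $\sup_{\tau\in(0,\tau_0)}\frac{e^{\tau}-1}{\tau}=\frac{e^{\tau_0}-1}{\tau_0}=:C(\tau_0)<\infty$.

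For the local Lipschitz bound~\eqref{eq:lemphipsiAC-localLip-psi} I would write $\psi_\tau^{\AC}(u_2)-\psi_\tau^{\AC}(u_1)=\frac1\tau\int_0^\tau\bigl(g(\phi_s^{\AC}(u_2))-g(\phi_s^{\AC}(u_1))\bigr)\,\dd s$, estimate the integrand by the local Lipschitz property $|g(a)-g(b)|\le(1+3\max(|a|,|b|)^2)|a-b|$ applied to $a=\phi_s^{\AC}(u_2)$, $b=\phi_s^{\AC}(u_1)$, and insert the a priori bound $|\phi_s^{\AC}(u_i)|\le 1+|u_i|$ together with~\eqref{eq:lemphipsiAC-Lip-phi}; this produces a factor $C(\tau_0)(1+|u_1|^2+|u_2|^2)|u_2-u_1|$, which is dominated by the claimed $C(\tau_0)(1+|u_1|^3+|u_2|^3)|u_2-u_1|$. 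For the consistency estimate~\eqref{eq:lemphipsiAC-cv-psi} I would write $\psi_\tau^{\AC}(u)-(u-u^3)=\frac1\tau\int_0^\tau\bigl(g(\phi_s^{\AC}(u))-g(u)\bigr)\,\dd s$, bound $|\phi_s^{\AC}(u)-u|\le\int_0^s|g(\phi_r^{\AC}(u))|\,\dd r\le C(\tau_0)s(1+|u|^3)$ using the a priori bound, and combine with $|g(\phi_s^{\AC}(u))-g(u)|\le(1+3(1+|u|)^2)|\phi_s^{\AC}(u)-u|$ to get $|g(\phi_s^{\AC}(u))-g(u)|\le C(\tau_0)s(1+|u|^5)$; averaging over $s\in(0,\tau)$ gives the bound $C(\tau_0)\tau(1+|u|^5)$.

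There is no genuinely hard step here; the only point requiring care is that every constant remains uniform in $\tau\in(0,\tau_0)$, which is precisely why the $\tau$-free a priori bound on $|\phi_s^{\AC}(u)|$ and the elementary fact $\sup_{\tau\in(0,\tau_0)}(e^{\tau}-1)/\tau<\infty$ are invoked repeatedly. (All four estimates are in any case already established in~\cite[Lemmas~3.1--3.4]{MR3986273}, from which one could simply quote them.)
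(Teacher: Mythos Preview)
Your argument is correct in every step. The paper does not give its own proof of this lemma; it simply quotes~\cite[Lemmas~3.1--3.4]{MR3986273}, which you also cite at the end, and your self-contained ODE/Gronwall derivation (one-sided Lipschitz constant~$1$ for $g(u)=u-u^3$, the $\tau$-free a priori bound $|\phi_s^{\AC}(u)|\le 1+|u|$, and the time-average representation of $\psi_\tau^{\AC}$) is exactly the route taken in that reference.
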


We are now in position to prove Proposition~\ref{propo:phitau}. The result is straightforward: $\phi_\tau$ is the composition of the two globally Lipschitz continuous mappings $\phi_\tau^{\rm L}$ and $\phi_\tau^{\rm NL}$. The proof is given to exhibit the dependence of the Lipschitz constant with respect to the time-step size $\tau\in(0,\tau_0)$.
\begin{proof}[Proof of Proposition~\ref{propo:phitau}]
Note that for all $\tau\in(0,\tau_0)$ and $x=(u,v)\in\R^2$ one has
\[
\phi_\tau^{\rm NL}(x)=\begin{pmatrix}\phi_\tau^{\AC}(u)\\ v+\beta \tau\end{pmatrix}.
\]
Using the definition~\eqref{eq:phitau} and the inequality~\eqref{eq:lemphipsiAC-Lip-phi} from Lemma~\ref{lem:phipsiAC}, one then obtains the following inequality: for all $\tau\in(0,\tau_0)$ and all $x_1=(u_1,v_1),x_2=(u_2,v_2)\in\R^2$, one has
\begin{align*}
\|\phi_\tau(x_2)-\phi_\tau(x_{1})\|^2&=\|\phi_\tau^{\rm L}(\phi_\tau^{\rm NL}(x_2))-\phi_\tau^{\rm L}(\phi_\tau^{\rm NL}(x_1))\|^2\\
&=\|e^{\tau B}\bigl(\phi_\tau^{\rm NL}(x_2)-\phi_\tau^{\rm NL}(x_1)\bigr)\|^2\\
&\le e^{2\tau\vvvert B\vvvert}\|\phi_\tau^{\rm NL}(x_2)-\phi_\tau^{\rm NL}(x_1)\|^2\\
&\le e^{2\tau\vvvert B\vvvert}\bigl(|\phi_\tau^{\AC}(u_2)-\phi_\tau^{\AC}(u_1)|^2+|v_2-v_1|^2\bigr)\\
&\le e^{2\tau\vvvert B\vvvert}\bigl(e^{2\tau}|u_2-u_1|^2+|v_2-v_1|^2\bigr)\\
&\le e^{2\tau(1+\vvvert B\vvvert)}\|x_2-x_1\|^2.
\end{align*}
This concludes the proof of Proposition~\ref{propo:phitau}.
\end{proof}

In order to prove Proposition~\ref{propo:psitau}, the main tool is the following expression for the mapping $\psi_\tau$ defined by~\eqref{eq:psitau}: for all $\tau\in(0,\tau_0)$ and $x\in\R^2$, one has
\begin{equation}\label{eq:identitypsitau}
\psi_\tau(x)=\psi_\tau^{\rm L}(\phi_\tau^{\rm NL}(x))+\psi_\tau^{\rm NL}(x),
\end{equation}
where the mappings $\psi_\tau^{\rm L}$ and $\psi_\tau^{\rm NL}$ are given by
\begin{align*}
&\psi_\tau^{\rm L}(x)=\frac{\phi_\tau^{\rm L}(x)-x}{\tau}=\frac{e^{\tau B}-I}{\tau}x\\
&\psi_\tau^{\rm NL}(x)=\frac{\phi_\tau^{\rm NL}(x)-x}{\tau}=\begin{pmatrix} \psi_\tau^{\AC}(u)\\ \beta\end{pmatrix}
\end{align*}
for all $\tau\in(0,\tau_0)$ and $x=(u,v)\in\R^2$.

The proof of the equality~\eqref{eq:identitypsitau} is straightforward: using~\eqref{eq:phitau}, one has
\begin{align*}
\psi_\tau(x)&=\frac{\phi_\tau(x)-x}{\tau}=\frac{\phi_\tau^{\rm L}(\phi_\tau^{\rm NL}(x))-\phi_\tau^{\rm NL}(x)}{\tau}+\frac{\phi_\tau^{\rm NL}(x)-x}{\tau}\\
&=\psi_\tau^{\rm L}(\phi_\tau^{\rm NL}(x))+\psi_\tau^{\rm NL}(x).
\end{align*}

Having the identity~\eqref{eq:identitypsitau} at hand, we are now in position to prove Proposition~\ref{propo:psitau}.
\begin{proof}[Proof of Proposition~\ref{propo:psitau}]
Note that the mapping $\psi_\tau^{\rm L}:\R^2\to\R^2$ is linear and therefore is globally Lipschitz continuous. In addition, for all $\tau\in(0,\tau_0)$ and $x_1,x_2\in\R^2$, one has
\begin{equation}\label{eq:psitauL-Lip}
\|\psi_\tau^{\rm L}(x_2)-\psi_\tau^{\rm L}(x_1)\|\le \vvvert \frac{e^{\tau B}-I}{\tau}\vvvert \|x_2-x_1\|\le \frac{e^{\tau_0 \vvvert B\vvvert}-1}{\tau_0}\|x_2-x_1\|,
\end{equation}
using the inequalities
\begin{align*}
\vvvert \frac{e^{\tau B}-I}{\tau}\vvvert&=\vvvert \sum_{k=1}^{\infty}\frac{\tau^{k-1}}{k!}B^k\vvvert\le \sum_{k=1}^{\infty}\frac{\tau^{k-1}}{k!}\vvvert B\vvvert^k\le \sum_{k=1}^{\infty}\frac{\tau_0^{k-1}}{k!}\vvvert B\vvvert^k=\frac{e^{\tau_0 \vvvert B\vvvert}-1}{\tau_0}.
\end{align*}
Let us first prove the one-sided Lipschitz continuity property~\eqref{eq:psitau-onesidedLip}: for all $\tau\in(0,\tau_0)$ and $x_1,x_2\in\R^2$, using the identity~\eqref{eq:identitypsitau}, then the Cauchy--Schwarz inequality and~\eqref{eq:psitauL-Lip}, one has
\begin{align*}
\langle x_2-x_1,\psi_\tau(x_2)-\psi_\tau(x_1)\rangle&=\langle x_2-x_1,\psi_\tau^{\rm L}(\phi_\tau^{\rm NL}(x_2))-\psi_\tau^{\rm L}(\phi_\tau^{\rm NL}(x_1))\rangle\\
&+\langle x_2-x_1,\psi_\tau^{\rm NL}(x_2)-\psi_\tau^{\rm NL}(x_1)\rangle\\
&\le \frac{e^{\tau_0 \vvvert B\vvvert}-1}{\tau_0} \|x_2-x_1\| \|\phi_\tau^{\rm NL}(x_2)-\phi_\tau^{\rm NL}(x_2)\|\\
&+\langle x_2-x_1,\psi_\tau^{\rm NL}(x_2)-\psi_\tau^{\rm NL}(x_1)\rangle.
\end{align*}
On the one hand, using the same arguments as in the proof of Proposition~\ref{propo:phitau}, one has
\[
\|\phi_\tau^{\rm NL}(x_2)-\phi_\tau^{\rm NL}(x_1)\|\le e^\tau \|x_2-x_1\|\le e^{\tau_0} \|x_2-x_1\|.
\]
On the other hand, for all $x=(u,v)\in\R^2$ one has
\[
\psi_\tau^{\rm NL}(x)=\begin{pmatrix} \psi_\tau^{\AC}(u)\\ \beta\end{pmatrix}.
\]
Using the inequality~\eqref{eq:lemphipsiAC-onesidedLip-psi} from Lemma~\ref{lem:phipsiAC}, one then obtains
\[
\langle x_2-x_1,\psi_\tau^{\rm NL}(x_2)-\psi_\tau^{\rm NL}(x_1)\rangle\le e^\tau\|x_2-x_1\|^2\le e^{\tau_0}\|x_2-x_1\|^2.
\]
Gathering the results then gives
\[
\langle x_2-x_1,\psi_\tau(x_2)-\psi_\tau(x_1)\rangle\le \Bigl(\frac{e^{\tau_0 \vvvert B\vvvert}-1}{\tau_0}+1\Bigr)e^{\tau_0}\|x_2-x_1\|^2,
\]
which concludes the proof of the inequality~\eqref{eq:psitau-onesidedLip}.

Let us now prove the local Lipschitz continuity property~\eqref{eq:psitau-localLip}. Using the identity~\eqref{eq:identitypsitau} and the inequality~\eqref{eq:lemphipsiAC-localLip-psi}, for all $\tau\in(0,\tau_0)$ and $x_1=(u_1,v_1),x_2=(u_2,v_2)\in\R^2$, one has
\begin{align*}
\|\psi_\tau(x_2)-\psi_\tau(x_1)\|&\le \|\psi_\tau^{\rm L}(\phi_\tau^{\rm NL}(x_2))-\psi_\tau^{\rm L}(\phi_\tau^{\rm NL}(x_1))\|+\|\psi_\tau^{\rm NL}(x_2)-\psi_\tau^{\rm NL}(x_1)\|\\
&\le \frac{e^{\tau_0 \vvvert B\vvvert}-1}{\tau_0}\|\phi_\tau^{\rm NL}(x_2)-\phi_\tau^{\rm NL}(x_1)\|+C(\tau_0)\bigl(1+|u_1|^3+|u_2|^3\bigr)|u_2-u_1|\\
&\le \Bigl(\frac{e^{\tau_0 \vvvert B\vvvert}-1}{\tau_0}e^{\tau_0}+C(\tau_0)\Bigr)\bigl(1+\|x_1\|^3+\|x_2\|^3\bigr)\|x_2-x_1\|.
\end{align*}
Let us now prove the error estimate~\eqref{eq:psitau-cv}. Using the identities~\eqref{eq:F} and~\eqref{eq:identitypsitau}, for all $\tau\in(0,\tau_0)$ and $x=(u,v)\in\R^2$, one has
\[
\|\psi_\tau(x)-F(x)\|\le \|\psi_\tau^{\rm L}(\phi_\tau^{\rm NL}(x))-F^{\rm L}(x)\|+\|\psi_\tau^{\rm NL}(x)-F^{\rm NL}(x)\|.
\]
On the one hand, using the inequality~\eqref{eq:psitauL-Lip}, the expressions of the linear mappings $F^{\rm L}$ and $\psi_\tau^{\rm L}$ and the definition of $\psi_\tau^{\rm NL}$, one has
\begin{align*}
\|\psi_\tau^{\rm L}(\phi_\tau^{\rm NL}(x))-F^{\rm L}(x)\|&\le \|\psi_\tau^{\rm L}(\phi_\tau^{\rm NL}(x))-F^{\rm L}(\phi_\tau^{\rm NL}(x))\|+\|F^{\rm L}(\phi_\tau^{\rm NL}(x))-F^{\rm L}(x)\|\\
&\le \vvvert \frac{e^{\tau B}-I-\tau B}{\tau}\vvvert\|\phi_\tau^{\rm NL}(x)\|+\tau\vvvert B\vvvert\|\psi_\tau^{\rm NL}(x)\|.
\end{align*}
Note that $\phi_\tau^{\rm NL}(0)=(\phi_\tau^{\AC}(0),\beta\tau)=(0,\beta\tau)$ and $\psi_\tau^{\rm NL}(0)=(\psi_\tau^{\AC}(0),\beta)=(0,\beta)$. In addition, one has
\[
\vvvert \frac{e^{\tau B}-I-\tau B}{\tau}\vvvert\le\sum_{k=2}^{\infty}\frac{\tau^{k-1}}{k!}\vvvert B\vvvert^k\le \tau \sum_{k=2}^{\infty}\frac{\tau_0^{k-2}}{k!}\vvvert B\vvvert^k=\tau\frac{e^{\tau_0\vvvert B\vvvert}-1-\tau_0\vvvert B\vvvert}{\tau_0}.
\]
Therefore, using the inequalities~\eqref{eq:phitau-Lip} from Proposition~\ref{propo:phitau} and~\eqref{eq:lemphipsiAC-localLip-psi} from Lemma~\ref{lem:phipsiAC}, one has
\[
\|\psi_\tau^{\rm L}(\phi_\tau^{\rm NL}(x))-F^{\rm L}(x)\|\le C(\tau_0)\tau(1+\|x\|^4).
\]

On the other hand, using the inequality~\eqref{eq:lemphipsiAC-cv-psi} from Lemma~\ref{lem:phipsiAC}, one has
\[
\|\psi_\tau^{\rm NL}(x)-F^{\rm NL}(x)\|=|\psi_\tau^{\AC}(u)-(u-u^3)|\le C(\tau_0)\tau\bigl(1+|u|^5\bigr).
\]
Gathering the estimates then gives the inequality
\[
\|\psi_\tau(x)-F(x)\|\le C(\tau_0)\tau(1+\|x\|^5),
\]
which concludes the proof of~\eqref{eq:psitau-cv}.

It remains to prove the inequality~\eqref{eq:psitau0}. The proof is straightforward: using~\eqref{eq:identitypsitau} and the equalities
$\phi_\tau^{\AC}(0)=\psi_\tau^{\AC}(0)=0$, one has
\[
\psi_\tau(0)=\frac{e^{\tau B}-I}{\tau}\phi_\tau^{\rm NL}(0)+\psi_\tau^{\rm NL}(0)=e^{\tau B}\begin{pmatrix} 0\\ \beta\end{pmatrix}.
\]
Therefore one gets
\[
\underset{\tau\in(0,\tau_0)}\sup~\|\psi_\tau(0)\|\le e^{\tau_0\vvvert B\vvvert}|\beta|.
\]
The proof of Proposition~\ref{propo:psitau} is thus completed.
\end{proof}

Let us conclude this subsection with a remark concerning the order of the composition of the two subsystems to define the splitting schemes, see equation~\eqref{eq:phitau}.
\begin{rem}\label{rem:schemehat}
Let $\hat{\phi}_\tau:\R^2\to\R^2$ be defined as follows: for all $\tau\in(0,\tau_0)$, set
\begin{equation}\label{eq:phitauhat}
\hat{\phi}_\tau=\phi_\tau^{\rm NL}\circ\phi_\tau^{\rm L}.
\end{equation}
Compared with the definition~\eqref{eq:phitau} of $\phi_\tau$, the order of the composition of the integrators $\phi_\tau^{\rm L}$ and $\phi_\tau^{\rm NL}$ associated with the subsystems~\eqref{eq:subODE-L} and~\eqref{eq:subODE-NL} respectively is reversed. Define also
\begin{equation}\label{eq:psitauhat}
\hat{\psi}_\tau(x)=\frac{\hat{\phi}_\tau(x)-x}{\tau}
\end{equation}
for all $\tau\in(0,\tau_0)$ and $x\in\R^2$. Using the mapping $\hat{\phi}_\tau$, modifying the definition of the scheme~\eqref{eq:LTscheme} gives the alternative splitting scheme
\begin{equation}\label{eq:LTschemehat}
\hat{X}_{n+1}=\IA_\tau\hat{\phi}_\tau(\hat{X}_n)+\int_{t_n}^{t_{n+1}}\IB_{t_{n+1}-s}\,\dd\WW(s)
\end{equation}
for the approximation of the stochastic evolution equation~\eqref{eq:SEE}. Precisely, alternatives of the splitting schemes~\eqref{eq:LTexact},~\eqref{eq:LTexpo} and~\eqref{eq:LTimp} are obtained from the formulation~\eqref{eq:LTschemehat}. However, the analysis performed in this paper does not encompass the case of the scheme~\eqref{eq:LTschemehat}, due to missing properties for the mapping $\hat{\psi}_\tau$, compared with $\psi_\tau$, as explained below.

Note that the result of Proposition~\ref{propo:phitau} also holds with $\phi_\tau$ replaced by $\hat{\phi}_\tau$. However, it is not clear whether the one-sided Lipschitz continuity property~\eqref{eq:psitau-onesidedLip} from Proposition~\ref{propo:psitau} holds also with $\psi_\tau$ replaced by $\hat{\psi}_\tau$ (uniformly with respect to $\tau\in(0,\tau_0)$). The proof of the inequality~\eqref{eq:psitau-onesidedLip} exploits the global Lipschitz continuity property~\eqref{eq:psitauL-Lip} of the auxiliary mapping $\psi_\tau^{\rm L}$, which is a linear mapping from $\R^2$ to $\R^2$. Instead of the identity~\eqref{eq:identitypsitau}, one has
\begin{equation}\label{eq:identitypsitauhat}
\hat{\psi}_\tau(x)=\psi_\tau^{\rm NL}(\phi_\tau^{\rm L}(x))+\psi_\tau^{\rm L}(x),
\end{equation}
and since $\psi_\tau^{\rm NL}$ is not globally Lipschitz continuous uniformly with respect to $\tau\in(0,\tau_0)$, the arguments of the proof above cannot be repeated for the splitting scheme~\eqref{eq:LTschemehat}.
\end{rem}

\subsection{Moment bounds for the solutions of the stochastic evolution equations~\eqref{eq:SEE} and~\eqref{eq:SEEaux}}

Let us first state the moment bounds for the stochastic convolution defined by~\eqref{eq:Z}.
\begin{lemma}\label{lem:Zbound}
Let $\bigl(\mathcal{Z}(t)\bigr)_{t\ge 0}$ be defined by~\eqref{eq:Z}. For all $T\in(0,\infty)$ and $p\in[1,\infty)$, one has
\[
\underset{0\le t\le T}\sup~\E[\|\mathcal{Z}(t)\|_\EE^p]<\infty.
\]
\end{lemma}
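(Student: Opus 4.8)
The plan is to reduce everything to the scalar heat stochastic convolution and then combine a uniform second–moment estimate with Gaussian hypercontractivity and Kolmogorov's continuity theorem. Since $\mathcal{Z}(t)=\begin{pmatrix} Z(t)\\ 0\end{pmatrix}$, one has $\|\mathcal{Z}(t)\|_\EE=\|Z(t)\|_E$, so it suffices to prove $\sup_{0\le t\le T}\E[\|Z(t)\|_E^p]<\infty$ for all $p\in[1,\infty)$, where $Z(t)=\int_0^t e^{(t-s)\Delta}\,\dd W(s)$. For each fixed $t$, $\zeta\mapsto Z(t,\zeta)$ is a centered Gaussian field on $[0,1]$; the continuous modification of the space–time field $(t,\zeta)\mapsto Z(t,\zeta)$ is the object meant in the statement, and it is this modification whose $\EE$-norm we bound.

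First I would establish the uniform spatial second–moment bound. Expanding in the eigenbasis, $Z(t,\zeta)=\sum_{j\ge 0}\xi_j(t)e_j(\zeta)$ with $\xi_j(t)=\int_0^t e^{-(t-s)\lambda_j}\,\dd\beta_j(s)$ independent, centered, Gaussian, of variance $v_j(t)=\frac{1-e^{-2t\lambda_j}}{2\lambda_j}\le\frac{1}{2\lambda_j}$ for $j\ge 1$ and $v_0(t)=t$. Using $|e_j(\zeta_1)-e_j(\zeta_2)|^2=2|\cos(j\pi\zeta_1)-\cos(j\pi\zeta_2)|^2\le 2\min\bigl(4,\lambda_j|\zeta_1-\zeta_2|^2\bigr)$ and summing, one gets
\[
\E\bigl[|Z(t,\zeta_1)-Z(t,\zeta_2)|^2\bigr]=\sum_{j\ge 1}v_j(t)|e_j(\zeta_1)-e_j(\zeta_2)|^2\le \sum_{j\ge 1}\min\Bigl(\tfrac{4}{\lambda_j},|\zeta_1-\zeta_2|^2\Bigr)\le C|\zeta_1-\zeta_2|,
\]
by splitting the series at the index $j\sim|\zeta_1-\zeta_2|^{-1}$; likewise $\sup_{t\le T}\sup_{\zeta}\E[|Z(t,\zeta)|^2]\le T+\sum_{j\ge 1}\lambda_j^{-1}<\infty$. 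Both constants are independent of $t\in[0,T]$. Since the increments are Gaussian, equivalence of Gaussian moments upgrades these to $\E[|Z(t,\zeta_1)-Z(t,\zeta_2)|^{2m}]\le C_m|\zeta_1-\zeta_2|^m$ and $\sup_\zeta\E[|Z(t,\zeta)|^{2m}]\le C_m$ for every $m\ge 1$, still uniformly in $t\le T$.

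Then I would apply Kolmogorov's continuity theorem on $[0,1]$ with $t$ fixed: for $m\ge 2$ and $\gamma<\frac{m-1}{2m}$ it yields a continuous modification of $\zeta\mapsto Z(t,\zeta)$ together with $\E\bigl[[Z(t,\cdot)]_{C^\gamma([0,1])}^{2m}\bigr]\le C(m,\gamma)$, the constant depending only on $m,\gamma$ (and on the moment constants above), hence not on $t$. Combining with $\|Z(t,\cdot)\|_E\le |Z(t,0)|+[Z(t,\cdot)]_{C^\gamma}$ gives $\sup_{t\le T}\E[\|Z(t)\|_E^{2m}]<\infty$, and for arbitrary $p\in[1,\infty)$ one chooses $m$ with $2m\ge p$ and uses that $\Omega$ has finite measure. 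The main obstacle is really just the borderline nature of the spatial estimate — the sum $\sum_j\min(\lambda_j^{-1},h^2)$ is exactly of order $h$ without logarithmic loss, reflecting the Hölder-$\tfrac12$ (respectively $\tfrac14$ in time) regularity of the convolution for one-dimensional space–time white noise — together with checking that the Kolmogorov constant is $t$-independent, which it is. An equivalent, more semigroup-flavoured route would use the factorization $Z(t)=c_\alpha\int_0^t(t-s)^{\alpha-1}e^{(t-s)\Delta}Y_\alpha(s)\,\dd s$ with $Y_\alpha(s)=\int_0^s(s-r)^{-\alpha}e^{(s-r)\Delta}\,\dd W(r)$: the bound $\sum_{j\ge 0}e^{-2\sigma\lambda_j}\le 1+C\sigma^{-1/2}$ (as in the computation preceding~\eqref{eq:Z}) gives $\sup_{s\le T}\E\|Y_\alpha(s)\|_{L^q(0,1)}^p<\infty$ for $\alpha\in(0,\tfrac14)$ and any finite $q$, and one concludes with the smoothing estimate $\|e^{\sigma\Delta}\|_{L^q\to E}\le C\sigma^{-1/(2q)}$ and Young's inequality in the outer time integral, valid once $\alpha>\tfrac1p+\tfrac1{2q}$; there the delicate point is the exponent bookkeeping balancing $\alpha<\tfrac14$ against integrability of the convolution kernel.
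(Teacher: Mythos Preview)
Your argument is correct and rests on the same core idea as the paper's sketch: control the second moments of the spatial increments of the Gaussian field $Z(t,\cdot)$, upgrade via Gaussian hypercontractivity, and conclude with Kolmogorov's criterion. The organization differs slightly. The paper first splits off the zero mode $Z_0(t)=\beta_0(t)e_0$ (necessary because $\lambda_0=0$ under Neumann conditions, so this part grows like Brownian motion) and then applies Kolmogorov only to the orthogonal remainder $Z_\perp$, invoking both temporal and spatial increment bounds from \cite[Lemma~5.21]{MR3236753}. You instead keep the full process together, absorbing the zero mode into the pointwise variance bound $v_0(t)=t\le T$, and apply Kolmogorov in the spatial variable alone with constants that are uniform in $t$; this is enough for the stated conclusion and avoids the temporal increment estimate altogether. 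Your version is more self-contained and gives the explicit increment computation, while the paper's decomposition has the virtue of isolating the only part of $Z$ whose moments actually grow with $T$. The factorization alternative you outline is also valid and standard, though not the route taken in the paper.
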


\begin{proof}
Let us only provide the sketch of the proof. To deal with homogeneous Neumann boundary conditions,
it is convenient to introduce $Z_0(t)=\langle Z(t),e_0\rangle e_0=\beta_0(t)e_0$ and ${Z}_\perp(t)=Z(t)-Z_0(t)$ for all $t\ge 0$. Let also $\mathcal{Z}_0(t)=\begin{pmatrix} Z_0(t)\\0\end{pmatrix}$ and ${\mathcal{Z}}_\perp(t)=\mathcal{Z}(t)-\mathcal{Z}_0(t)$. On the one hand, one has
\[
\underset{0\le t\le T}\sup~\E[\|\mathcal{Z}_0(t)\|_\EE^p]=\underset{0\le t\le T}\sup~\E[\|Z_0(t)\|_E^p]\le \underset{0\le t\le T}\sup~\E[|\beta_0(t)|^p]\|e_0\|_E^p\le CT^{\frac{p}{2}}.
\]
On the other hand, applying the temporal and spatial increment bounds~\cite[Lemma~5.21]{MR3236753} and the Kolmogorov regularity criterion~\cite[Theorem~C.6]{MR3222416} gives
\[
\underset{0\le t\le T}\sup~\E[\|\mathcal{Z}_\perp(t)\|_\EE^p]=\underset{0\le t\le T}\sup~\E[\|Z_\perp(t)\|_E^p]\le C(T)<\infty.
\]
Combining the moment bounds for $\mathcal{Z}_0(t)$ and $\mathcal{Z}_\perp(t)$ then concludes the proof of Lemma~\ref{lem:Zbound}.
\end{proof}

We now state well-posedness and moment bounds properties, first for the solutions to the stochastic FitzHugh--Nagumo SPDE system~\eqref{eq:FhNsystem}, 
second for the solutions to the auxiliary SPDE~\eqref{eq:SEE}.
\begin{propo}\label{propo:SEE}
For any initial value $x_0\in\HH$, the stochastic evolution equation~\eqref{eq:SEE} admits a unique global mild solution $\bigl(X(t)\bigr)_{t\ge 0}$, in the sense that~\eqref{eq:SEEmild} is satisfied. Moreover, for all $T\in(0,\infty)$ and all $p\in[1,\infty)$, there exists $C_p(T)\in(0,\infty)$ such that for all $x_0\in\EE$ one has
\begin{equation}\label{eq:momentSEE}
\underset{0\le t\le T}\sup~\E[\|X(t)\|_\EE^p]\le C_p(T)\bigl(1+\|x_0\|_\EE^p\bigr).
\end{equation}
\end{propo}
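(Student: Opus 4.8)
The plan is to remove the noise via the classical change of unknown $Y(t)=X(t)-\mathcal{Z}(t)$, which turns the mild formulation~\eqref{eq:SEEmild} into the pathwise integral equation
\begin{equation*}
Y(t)=e^{-t\IL}x_0+\int_0^t e^{-(t-s)\IL}F\bigl(Y(s)+\mathcal{Z}(s)\bigr)\,\dd s .
\end{equation*}
For almost every $\omega$ this is a deterministic evolution equation whose only stochastic input is the path $s\mapsto\mathcal{Z}(s)\in\EE$, which is continuous and satisfies $\sup_{[0,T]}\|\mathcal{Z}(s)\|_\EE<\infty$ almost surely (both facts being part of the Kolmogorov-type argument behind Lemma~\ref{lem:Zbound}). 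Since the cubic Nemytskii operator is locally Lipschitz continuous from $E$ to $E$ and $\bigl(e^{-t\IL}\bigr)_{t\ge0}$ is a $C_0$-contraction semigroup on $\EE$ by Proposition~\ref{propSemiGrp}, a Banach fixed-point argument in $C([0,\tau^\star];\EE)$ produces, for $x_0\in\EE$, a unique maximal mild solution with values in $\EE$; for $x_0\in\HH$ the same scheme applies after inserting the $L^2(0,1)\to E$ smoothing bound $\min(1,t)^{-1/4}$ of the Neumann heat semigroup to absorb the $t\downarrow0$ singularity of the cubic term, see~\cite{MR3236753}, and uniqueness within the class of $\HH$-valued mild solutions then follows from the one-sided Lipschitz inequality~\eqref{eq:onesidedF} and Gronwall's lemma. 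Since this part is standard, I would only detail the a priori bounds that exclude blow-up and yield~\eqref{eq:momentSEE}.

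First I would derive an $\HH$-valued a priori bound. Pairing the equation for $Y$ with $Y$ in $\HH$, using $\langle Y,-\IL Y\rangle_\HH\le0$, and writing $\langle Y,F(Y+\mathcal{Z})\rangle_\HH=\langle Y,F(Y+\mathcal{Z})-F(\mathcal{Z})\rangle_\HH+\langle Y,F(\mathcal{Z})\rangle_\HH$, the one-sided Lipschitz property~\eqref{eq:onesidedF} gives
\begin{equation*}
\tfrac12\tfrac{\dd}{\dd t}\|Y(t)\|_\HH^2\le C\|Y(t)\|_\HH^2+\|Y(t)\|_\HH\bigl(C+C\|Z(t)\|_{L^6(0,1)}^3\bigr),
\end{equation*}
and since $\sup_{[0,T]}\|Z(t)\|_{L^6(0,1)}$ is almost surely finite, Gronwall's lemma bounds $\sup_{[0,T]}\|Y(t)\|_\HH$, hence $\sup_{[0,T]}\|X(t)\|_\HH$. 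This forbids finite-time blow-up of the $\HH$-valued solution and establishes the global existence asserted for $x_0\in\HH$.

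The core is the $\EE$-valued estimate~\eqref{eq:momentSEE}, which I would prove componentwise for $x_0\in\EE$. The recovery variable solves the pointwise linear ODE $v(t)=e^{-\gamma_2 t}v_0+\int_0^t e^{-\gamma_2(t-s)}\bigl(\gamma_1 u(s)+\beta\bigr)\,\dd s$, so $\|v(t)\|_E\le e^{|\gamma_2|T}\|v_0\|_E+e^{|\gamma_2|T}\int_0^t\bigl(|\gamma_1|\,\|u(s)\|_E+|\beta|\bigr)\,\dd s$: the feedback from $u$ enters only through a time integral. For the voltage variable set $\tilde u=u-Z$, which solves $\partial_t\tilde u=\Delta\tilde u+(\tilde u+Z)-(\tilde u+Z)^3-v$ with homogeneous Neumann boundary conditions --- a reaction--diffusion equation with continuous-in-time coefficients, so that the parabolic maximum principle is available (the Neumann heat semigroup being positivity-preserving and an $E$-contraction). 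Evaluating at a point where $\tilde u(t,\cdot)$ attains its positive maximum, respectively negative minimum, and using that $z\mapsto z-z^3$ is strongly decreasing for $|z|$ large, one obtains, as soon as $\|\tilde u(t)\|_E$ exceeds a fixed multiple of $\sup_{[0,T]}\|Z\|_E$, the dissipative differential inequality $D^+\|\tilde u(t)\|_E\le -c\,\|\tilde u(t)\|_E^3+C\bigl(1+\|v(t)\|_E\bigr)$ for the upper Dini derivative. Inserting the integral bound on $\|v(t)\|_E$ and comparing with the associated scalar ODE, the cubic dissipation dominates the at-most-affine-in-$\int_0^t\|\tilde u(s)\|_E\,\dd s$ forcing, so $\sup_{[0,T]}\|\tilde u(t)\|_E$, and then $\sup_{[0,T]}\|v(t)\|_E$, is bounded by $C(T)\bigl(1+\|x_0\|_\EE\bigr)\bigl(1+\sup_{[0,T]}\|\mathcal{Z}(t)\|_\EE\bigr)^{k}$ for a fixed exponent $k$. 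Raising this pathwise bound on $\sup_{[0,T]}\|X(t)\|_\EE$ to the power $p$, taking expectations, and invoking Lemma~\ref{lem:Zbound} --- more precisely the finiteness of $\E\bigl[\sup_{[0,T]}\|\mathcal{Z}(t)\|_\EE^{q}\bigr]$ for every $q\in[1,\infty)$, which the Kolmogorov argument of its proof also yields --- gives~\eqref{eq:momentSEE}.

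The step I expect to be the main obstacle is precisely this $L^\infty$, that is $\EE$, bound: the dissipativity encoded in~\eqref{eq:onesidedF} directly controls only the $L^2$, that is $\HH$, norm, so reaching $\EE$ forces one to combine the maximum principle for the Neumann heat equation with the sign structure of the cubic nonlinearity, while at the same time tracking the (benign but non-absent) feedback of the recovery variable $v$ onto the voltage variable $u$; arranging the resulting differential inequality so that the cubic dissipation closes the estimate with an affine dependence on $\|x_0\|_\EE$ is the delicate point. The $t\downarrow0$ singularity in the well-posedness for merely $\HH$-valued initial data, by contrast, is a routine weighted-norm matter.
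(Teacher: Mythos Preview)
Your proposal is correct and follows essentially the same strategy that the paper sketches: the paper explicitly omits a detailed proof of Proposition~\ref{propo:SEE}, pointing instead to the substitution $Y(t)=X(t)-\mathcal{Z}(t)$, the one-sided Lipschitz property~\eqref{eq:onesidedF}, the moment bounds of Lemma~\ref{lem:Zbound}, and the references~\cite[Proposition~6.2.2]{MR1840644} and~\cite[Propositions~1--2]{MR3986273}, all of which your argument implements. Your maximum-principle treatment of the $\EE$-bound is precisely the concrete form of the $E$-subdifferential dissipativity argument used in those references; the only cosmetic difference is that the cited works bound $f(\tilde u+Z)$ at the extremal point directly by $C(1+\|Z\|_E^3)$ (which yields a linear Gronwall inequality after inserting the integral bound on $\|v\|_E$), whereas you retain the cubic dissipation $-c\|\tilde u\|_E^3$ and close via a running-supremum comparison---both routes give the same affine-in-$\|x_0\|_\EE$, polynomial-in-$\sup_{[0,T]}\|\mathcal{Z}\|_\EE$ pathwise bound.
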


\begin{propo}\label{propo:SEEaux}
For any initial value $x_0\in\HH$ and for all $\tau\in(0,\tau_0)$, the stochastic evolution equation~\eqref{eq:SEEaux} admits a unique global mild solution $\bigl(X_\tau(t)\bigr)_{t\ge 0}$, in the sense that
\begin{equation}\label{eq:SEEauxmild}
X_\tau(t)=e^{-t\IL}x_0+\int_0^t e^{-(t-s)\IL}\psi_\tau(X_\tau(s))\,\dd s+\int_0^t e^{-(t-s)\IL}\,\dd \WW(s)
\end{equation}
is satisfied for all $t\ge 0$. Moreover, for all $T\in(0,\infty)$ and all $p\in[1,\infty)$, there exists $C_p(T,\tau_0)\in(0,\infty)$ such that for all $x_0\in\EE$ one has
\begin{equation}\label{eq:momentSEEaux}
\underset{\tau\in(0,\tau_0)}\sup~\underset{0\le t\le T}\sup~\E[\|X_\tau(t)\|_\EE^p]\le C_p(T)\bigl(1+\|x_0\|_\EE^p\bigr).
\end{equation}
\end{propo}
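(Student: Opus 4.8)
The plan is to follow the proof of Proposition~\ref{propo:SEE} line by line, with the nonlinearity $F$ replaced by $\psi_\tau$. By Proposition~\ref{propo:psitau} the map $\psi_\tau$ satisfies --- uniformly in $\tau\in(0,\tau_0)$ --- the one-sided Lipschitz estimate \eqref{eq:psitau-onesidedLip}, the local Lipschitz and polynomial growth estimate \eqref{eq:psitau-localLip}, and the bound \eqref{eq:psitau0} at the origin (which combine to give $\|\psi_\tau(x)\|\le C(1+\|x\|^4)$ uniformly in $\tau$); these are precisely the structural properties of $F$ used in that proof, and their uniformity in $\tau$ is what makes the constant in \eqref{eq:momentSEEaux} independent of $\tau$. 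For existence and uniqueness, fix $\tau\in(0,\tau_0)$: then $\psi_\tau$ is globally (not merely locally) Lipschitz continuous on $\R^2$, so the induced Nemytskii operator is globally Lipschitz on $\HH$, and combined with the contractivity \eqref{eq:propSemiGrp-normes} of $\bigl(e^{-t\IL}\bigr)_{t\ge0}$, a Banach fixed-point argument in $C([0,T];L^p(\Omega;\HH))$ produces a unique $\HH$-valued process satisfying \eqref{eq:SEEauxmild}. For $x_0\in\EE$, running the same contraction in $C([0,T];L^p(\Omega;\EE))$ --- using again \eqref{eq:propSemiGrp-normes} and that $\mathcal Z$ has finite $\EE$-moments by Lemma~\ref{lem:Zbound} --- yields $\EE$-valued continuous trajectories.

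For the moment bound \eqref{eq:momentSEEaux} I would decompose $X_\tau(t)=\mathcal Z(t)+R_\tau(t)$, where $\mathcal Z$ is the stochastic convolution \eqref{eq:Z} --- so $\mathcal Z(t)=(Z(t),0)$, with $\E\bigl[\sup_{0\le t\le T}\|\mathcal Z(t)\|_\EE^p\bigr]<\infty$ from the argument proving Lemma~\ref{lem:Zbound} --- and $R_\tau=(r_\tau,s_\tau)$ solves, pathwise, the noise-free random equation
\[
\frac{\dd R_\tau(t)}{\dd t}=-\IL R_\tau(t)+\psi_\tau\bigl(R_\tau(t)+\mathcal Z(t)\bigr),\qquad R_\tau(0)=x_0.
\]
From the identity \eqref{eq:identitypsitau} and $\|\phi_\tau^{\rm NL}(x)\|\le C(1+\|x\|)$ uniformly in $\tau$ (using $|\phi_\tau^{\AC}(u)|\le\max(|u|,1)$, immediate from \eqref{eq:phipsiAC}), one writes $\bigl[\psi_\tau(u,v)\bigr]_1=\psi_\tau^{\AC}(u)+g_\tau(u,v)$ and $\bigl[\psi_\tau(u,v)\bigr]_2=h_\tau(u,v)$ with $|g_\tau(u,v)|+|h_\tau(u,v)|\le C(1+|u|+|v|)$ uniformly in $\tau$. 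Hence the recovery component satisfies $s_\tau(t)=v_0+\int_0^t h_\tau(X_\tau(\sigma))\,\dd\sigma$, so that $\|s_\tau(t)\|_E\le\|v_0\|_E+C\int_0^t\bigl(1+\|r_\tau(\sigma)\|_E+\|s_\tau(\sigma)\|_E+\|Z(\sigma)\|_E\bigr)\dd\sigma$, while $r_\tau$ solves the scalar reaction--diffusion equation $\partial_t r_\tau=\Delta r_\tau+\psi_\tau^{\AC}(r_\tau+Z)+g_\tau(r_\tau+Z,s_\tau)$ with homogeneous Neumann boundary conditions. The crucial ingredient is the uniform dissipativity $u\,\psi_\tau^{\AC}(u)\le C$ for all $u\in\R$ and all $\tau\in(0,\tau_0)$ (from $u\,\phi_\tau^{\AC}(u)\le u^2$ when $|u|\ge1$, and $|\psi_\tau^{\AC}(u)|\le2$ when $|u|\le1$, both read off \eqref{eq:phipsiAC}), which forces, at a point where $|r_\tau(t,\cdot)|$ is maximal, the estimate $\psi_\tau^{\AC}(r_\tau+Z)\,\mathrm{sign}(r_\tau)\le C\bigl(1+\|Z\|_E^4\bigr)$ --- a bound not involving $\|r_\tau\|_E$. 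A parabolic comparison (maximum-principle) argument then gives $\tfrac{\dd}{\dd t}\|r_\tau(t)\|_E\le C\bigl(1+\|Z(t)\|_E^4+\|r_\tau(t)\|_E+\|s_\tau(t)\|_E\bigr)$, and together with the inequality for $\|s_\tau\|_E$, Grönwall yields the pathwise bound $\sup_{0\le t\le T}\|R_\tau(t)\|_\EE\le C(T)\bigl(1+\|x_0\|_\EE+\sup_{0\le t\le T}\|Z(t)\|_E^4\bigr)$ with a deterministic constant $C(T)$, uniform in $\tau$. Raising to the $p$-th power, taking expectations (all moments of $\sup_{0\le t\le T}\|Z(t)\|_E$ being finite), and using $\|X_\tau\|_\EE\le\|R_\tau\|_\EE+\|\mathcal Z\|_\EE$, gives \eqref{eq:momentSEEaux}; alternatively one may use the $L^{2m}(0,1)$ energy method (testing against $|r_\tau|^{2m-2}r_\tau$, keeping the nonpositive diffusion term, a one-dimensional interpolation, then $m\to\infty$) or the general theory of reaction--diffusion SPDEs with dissipative polynomially growing reaction \cite{MR3236753}, always tracking the $\tau$-uniformity via Proposition~\ref{propo:psitau}.

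The main obstacle is exactly this $\EE$-valued (uniform-norm) a priori bound for the voltage variable, uniform in $\tau$: since $\psi_\tau$ is only locally Lipschitz with quartic growth, a naive Duhamel estimate in $E$ diverges, and one must genuinely exploit the dissipativity of $\psi_\tau^{\AC}$ --- this is where the elementary bounds $|\phi_\tau^{\AC}(u)|\le\max(|u|,1)$ and $u\,\psi_\tau^{\AC}(u)\le C$ are used --- together with the parabolic structure, while simultaneously controlling the coupling to the recovery variable (handled by treating its equation as a near-linear ODE driven by the voltage) and keeping every constant independent of $\tau$, as provided by Proposition~\ref{propo:psitau}. The remaining ingredients --- the $\HH$- and $\EE$-valued contraction arguments and the Grönwall bookkeeping --- are routine adaptations of the proof of Proposition~\ref{propo:SEE}.
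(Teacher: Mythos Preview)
Your proposal is correct and follows essentially the same route the paper sketches: the paper omits the detailed proof but explicitly indicates the strategy of subtracting the stochastic convolution, $Y_\tau(t)=X_\tau(t)-\mathcal Z(t)$, and then exploiting the one-sided Lipschitz/dissipativity properties of $\psi_\tau$ from Proposition~\ref{propo:psitau} (uniform in $\tau$), together with the moment bounds on $\mathcal Z$ from Lemma~\ref{lem:Zbound}, referring to \cite[Propositions~1--2]{MR3986273} and \cite[Proposition~6.2.2]{MR1840644} for the analogous arguments. Your component-wise treatment of $(r_\tau,s_\tau)$ and the maximum-principle/comparison step for the voltage variable are precisely the ``adaptations needed since the system is not parabolic'' that the paper alludes to without spelling out; the only cosmetic point is that your bound $|\psi_\tau^{\AC}(u)|\le 2$ for $|u|\le 1$ should really read $\le C(\tau_0)$, but this does not affect the argument.
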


The detailed proofs of Propositions~\ref{propo:SEE} and~\ref{propo:SEEaux} are omitted.
However let us emphasize that the main arguments used in the proofs are, on the one hand,
the one-sided Lipschitz continuity properties~\eqref{eq:onesidedF} and~\eqref{eq:psitau-onesidedLip} of $F$ and $\psi_\tau$ respectively, and on the other hand, the moment bounds on $\mathcal{Z}(t)$ from Lemma~\ref{lem:Zbound}. Observe that the mapping $\psi_\tau$ is globally Lipschitz continuous for any $\tau>0$, therefore the existence and uniqueness of the mild solution $\bigl(X_\tau(t)\bigr)_{t\ge 0}$ satisfying~\eqref{eq:SEEauxmild} follows from standard fixed point arguments, see for instance~\cite[Theorem~7.5]{MR3236753}. The proof of the moment bounds~\eqref{eq:momentSEEaux} requires some care: indeed, one needs to obtain upper bounds which are uniform with respect to $\tau\in(0,\tau_0)$, and applying~\cite[Theorem~7.5]{MR3236753} would not be appropriate since the Lipschitz constant of $\psi_\tau$ is unbounded for $\tau\in(0,\tau_0)$. Introducing $Y_\tau(t)=X_\tau(t)-\mathcal{Z}(t)$, one obtains the moment bounds~\eqref{eq:momentSEEaux} using the one-sided Lipschitz continuity property~\eqref{eq:psitau-onesidedLip} from Proposition~\ref{propo:psitau}, which is uniform with respect to $\tau\in(0,\tau_0)$. Similar arguments are used to prove Proposition~\ref{propo:SEE}. Propositions~\ref{propo:SEE} and~\ref{propo:SEEaux} are variants of~\cite[Propositions~1 and~2]{MR3986273} for the analysis of the stochastic Allen--Cahn equation and we refer to~\cite[Proposition~6.2.2]{MR1840644} for a more general version. Some arguments need to be adapted since the considered systems~\eqref{eq:SEE} and~\eqref{eq:SEEaux} are not parabolic systems.

Finally, let us state the following result which is required in Section~\ref{sectionProofs} below.
\begin{lemma}\label{lem:regularityXtau}
For all $T\in(0,\infty)$, $p\in[1,\infty)$ and $\alpha\in[0,\frac14)$, there exists $C_{\alpha,p}(T)\in(0,\infty)$ such that for all $x_0=(u_0,v_0)\in\HH^{2\alpha}\cap\EE$, all $\tau\in(0,\tau_0)$ and $t_1,t_2\in[0,T]$, one has
\begin{equation}\label{eq:regularityXtau}
\bigl(\E[\|X_\tau(t_2)-X_\tau(t_1)\|_\HH^p]\bigr)^{\frac1p}\le C_{\alpha,p}(T)|t_2-t_1|^{\alpha}\bigl(1+\|(-\Delta)^\alpha u_0\|_H^4+\|x_0\|_\EE^4\bigr).
\end{equation}
\end{lemma}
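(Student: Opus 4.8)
The strategy is to use the mild formulation \eqref{eq:SEEauxmild} and to estimate separately the contribution of the deterministic semigroup part, the deterministic integral involving $\psi_\tau$, and the stochastic convolution $\mathcal{Z}$. Without loss of generality take $0\le t_1\le t_2\le T$ and write
\[
X_\tau(t_2)-X_\tau(t_1)=\bigl(e^{-t_2\IL}-e^{-t_1\IL}\bigr)x_0+\int_{t_1}^{t_2}e^{-(t_2-s)\IL}\psi_\tau(X_\tau(s))\,\dd s+\int_0^{t_1}\bigl(e^{-(t_2-s)\IL}-e^{-(t_1-s)\IL}\bigr)\psi_\tau(X_\tau(s))\,\dd s+\bigl(\mathcal{Z}(t_2)-\mathcal{Z}(t_1)\bigr).
\]
The first term is handled directly by the temporal regularity estimate \eqref{eq:propSemiGrp-regul} of Proposition \ref{propSemiGrp} with $\mu=0$, $\nu=\alpha$ (legitimate since $x_0\in\HH^{2\alpha}$), yielding a bound $C|t_2-t_1|^\alpha\|(-\Delta)^\alpha u_0\|_H$. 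For the stochastic convolution term $\mathcal{Z}(t_2)-\mathcal{Z}(t_1)$, one uses the standard temporal increment bound for the stochastic convolution of the heat semigroup (as invoked in the proof of Lemma \ref{lem:Zbound} via \cite[Lemma~5.21]{MR3236753}), which gives $\bigl(\E[\|\mathcal{Z}(t_2)-\mathcal{Z}(t_1)\|_\HH^p]\bigr)^{1/p}\le C|t_2-t_1|^\alpha$ for any $\alpha<1/4$, with no dependence on $x_0$; note only the $H$-norm (not the $\EE$-norm) is needed here, so the argument is the easier $L^2$-based one.

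For the two deterministic integral terms, I would bound the integrand in $\HH$ using the growth of $\psi_\tau$. From \eqref{eq:psitau-localLip} and \eqref{eq:psitau0} one gets $\|\psi_\tau(y)\|\le C(1+\|y\|^4)$ uniformly in $\tau\in(0,\tau_0)$ (the Nemytskii operator then satisfies $\|\psi_\tau(Y)\|_\HH\le C(1+\|Y\|_\EE^4)$, using $\|Y\|_{L^8}^4\le \|Y\|_E^4$ on $(0,1)$), and the moment bound \eqref{eq:momentSEEaux} of Proposition \ref{propo:SEEaux} controls $\bigl(\E[\|X_\tau(s)\|_\EE^{4p}]\bigr)^{1/p}\le C(1+\|x_0\|_\EE^4)^4$. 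Hence $\bigl(\E[\|\psi_\tau(X_\tau(s))\|_\HH^p]\bigr)^{1/p}\le C(1+\|x_0\|_\EE^4)$. The first integral, over $[t_1,t_2]$, is then bounded using \eqref{eq:propSemiGrp-normes} (contractivity of $e^{-t\IL}$ on $\HH$), Minkowski's integral inequality, and $\int_{t_1}^{t_2}\dd s=|t_2-t_1|\le T^{1-\alpha}|t_2-t_1|^\alpha$. The second integral, over $[0,t_1]$, is the one requiring the smoothing estimate \eqref{eq:propSemiGrp-regul}: apply it with $\nu=0$ and $\mu=\alpha$ to get $\|(e^{-(t_2-s)\IL}-e^{-(t_1-s)\IL})y\|_\HH\le C\frac{|t_2-t_1|^\alpha}{(t_1-s)^\alpha}\|y\|_\HH$, so after Minkowski the factor $\int_0^{t_1}(t_1-s)^{-\alpha}\,\dd s\le \frac{T^{1-\alpha}}{1-\alpha}$ is finite precisely because $\alpha<1<1/4+\dots$, i.e. $\alpha\in[0,1)$ suffices here.

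Collecting the four contributions gives the claimed bound with the exponent $4$ on $\|(-\Delta)^\alpha u_0\|_H$ and $\|x_0\|_\EE$ (the $\|x_0\|_\EE^4$ coming from the $\psi_\tau$ growth combined with \eqref{eq:momentSEEaux}, and $\|(-\Delta)^\alpha u_0\|_H$ at power $1\le 4$ from the first term — one absorbs the lower power into $1+\|\cdot\|^4$). The only genuinely delicate point is the stochastic convolution increment bound: one must justify the temporal Hölder regularity of order $\alpha<1/4$ of $t\mapsto\mathcal{Z}(t)\in\HH$, which follows from the spectral computation $\E[\|\mathcal{Z}(t_2)-\mathcal{Z}(t_1)\|_H^2]=\sum_{j\ge0}\int\dots$ split into the transient part (difference of semigroups applied to past noise) and the fresh part (noise on $[t_1,t_2]$), each of which is $O(|t_2-t_1|^{2\alpha})$ using $\sup_\xi \xi^{2\alpha}e^{-2\xi}<\infty$ and $\sum_j \lambda_j^{-(1-2\alpha)}<\infty$ for $2\alpha<1$ (in fact the borderline $1/4$ is dictated by the $\|\cdot\|_\EE$ regularity in the companion estimates, not by this $\HH$-estimate, so there is slack here); then Gaussianity upgrades the $L^2$ bound to all $L^p$. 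Everything else is routine given Propositions \ref{propSemiGrp}, \ref{propo:psitau} and \ref{propo:SEEaux}.
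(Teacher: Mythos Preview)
Your proof is correct and follows essentially the same approach as the paper: the same four-term decomposition of the mild formulation~\eqref{eq:SEEauxmild}, with the initial-condition term handled by~\eqref{eq:propSemiGrp-regul} (taking $\mu=0$, $\nu=\alpha$), the two $\psi_\tau$-integrals by the polynomial growth bound from Proposition~\ref{propo:psitau} combined with the moment bounds~\eqref{eq:momentSEEaux}, and the stochastic convolution increment by the standard temporal regularity estimate. One minor slip in your parenthetical sketch: since $\lambda_j\sim j^2$, the series $\sum_{j\ge1}\lambda_j^{-(1-2\alpha)}$ converges if and only if $2(1-2\alpha)>1$, i.e.\ $\alpha<\tfrac14$ (not merely $2\alpha<1$), so the restriction $\alpha<\tfrac14$ is already sharp for the $\HH$-norm increment of $\mathcal{Z}$ and there is no slack.
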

\begin{proof}

Let $0\leq t_1<t_2\leq T$, using the mild form~\eqref{eq:SEEauxmild} of the auxiliary stochastic evolution equation, we obtain the estimate
\begin{align*}
\bigl(\E\left[ \|X_\tau(t_2)-X_\tau(t_1)\|_\HH^p \right]\bigr)^{\frac1p}&\leq
\|e^{-t_2\IL}x_0-e^{-t_1\IL}x_0\|_{\HH}+
\bigl(\E\left[ \|\mathcal{Z}(t_2)-\mathcal{Z}(t_1)\|_\HH^p \right]\bigr)^{\frac1p} \\
&\quad+\int_0^{t_1}\bigl(\E\left[ \|\left( e^{-(t_2-s)\IL}-e^{-(t_1-s)\IL} \right)\psi_\tau(X_\tau(s))\|_\HH^p \right]
\bigr)^{\frac1p}\,\dd s\\
&\quad+\int_{t_1}^{t_2}\bigl(\E\left[ \|e^{-(t_2-s)\IL}\psi_\tau(X_\tau(s))\|_\HH^p \right]\bigr)^{\frac1p}\,\dd s,
\end{align*}
where we recall that $\mathcal{Z}(t)$ denotes the stochastic convolution~\eqref{eq:Z}.

The first term on the right-hand side is estimated using the inequality~\eqref{eq:propSemiGrp-regul} in order
to get
$$
\|e^{-t_2\IL}x_0-e^{-t_1\IL}x_0\|_\HH\leq|t_2-t_1|^\alpha\|(-\IL)^\alpha x_0\|_\HH.
$$
The second term corresponds to the temporal regularity of the stochastic convolution
$$
\bigl(\E\left[ \|\mathcal{Z}(t_2)-\mathcal{Z}(t_1)\|_\HH^p \right]\bigr)^{\frac1p}\leq|t_2-t_1|^\alpha.
$$
This is obtained combining the proofs of Lemma~\ref{lem:Zbound} and of \cite[Theorem~4.4]{brehierhal}.

The last two terms are estimated using the polynomial growth $\|\psi_\tau(x)\|\leq C(\tau_0)\left(1+\|x\|\right)^4$,
see equations~\eqref{eq:psitau-localLip}~and~\eqref{eq:psitau0} in Proposition~\ref{propo:psitau}.
Indeed, one has
\begin{align*}
\|\left( e^{-(t_2-s)\IL}-e^{-(t_1-s)\IL} \right)\psi_\tau(X_\tau(s))\|_\HH
&\leq C_\alpha
\frac{|t_2-t_1|^\alpha}{|t_1-s|^\alpha}\| \psi_\tau(X_\tau(s)) \|_{\HH}\\
&\leq C_\alpha(\tau_0)
\frac{|t_2-t_1|^\alpha}{|t_1-s|^\alpha}\left(1+\| X_\tau(s)\|^4_{\EE}\right)
\end{align*}
and
$$
\|e^{-(t_2-s)\IL}\psi_\tau(X_\tau(s))\|_\HH\leq \left(1+\| X_\tau(s)\|^4_{\EE}\right)
$$
for the last term. One concludes the proof using the moment bounds of the solution of the auxiliary stochastic evolution equation, see Proposition~\ref{propo:SEEaux}.
\end{proof}

\section{Proofs of the main results}\label{sectionProofs}
In this section, we provide the detailed proofs for the main results of the present work. We start by proving moment bounds for the three splitting schemes (Theorem~\ref{theo:momentbounds}). We then prove the strong error estimates with rate of convergence at least $1/4$ (Theorem~\ref{theo:error}).

\subsection{Proof of Theorem~\ref{theo:momentbounds}}

The proof of the moment bounds~\eqref{eq:momentboundsLTscheme} given below is inspired by the proof of~\cite[Proposition~3]{MR3986273}
and requires some auxiliary tools.

Given the time-step size $\tau\in(0,\tau_0)$, introduce the auxiliary scheme
$\bigl(\mathcal{Z}_n\bigr)_{n\ge 0}$ defined as follows: for all $n\ge 0$,
\begin{equation}\label{eq:Zscheme}
\mathcal{Z}_{n+1}=\IA_\tau\mathcal{Z}_n+\int_{t_n}^{t_{n+1}}\IB_{t_{n+1}-s}\,\dd\WW(s)
\end{equation}
with initial value $\mathcal{Z}_0=0$, using the same notation as for the general expression~\eqref{eq:LTscheme} of the three splitting schemes~\eqref{eq:LTexact},~\eqref{eq:LTexpo} and~\eqref{eq:LTimp}. One has the following moment bounds for the solution of the scheme~\eqref{eq:Zscheme}. Recall that one has $T=N\tau$ for some integer $N\in\N$.
\begin{lemma}\label{lem:momentboundsZscheme}
For all $T\in(0,\infty)$ and $p\in[1,\infty)$, one has
\begin{equation}\label{eq:momentboundsZscheme}
\underset{\tau\in(0,\tau_0)}\sup~\underset{0\le n\le N}\sup~\E[\|\mathcal{Z}_n\|_\EE^p]<\infty.
\end{equation}
\end{lemma}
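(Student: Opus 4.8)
The plan is to identify the discrete process $\bigl(\mathcal{Z}_n\bigr)_{n\ge 0}$ with a time-discrete sampling of (a perturbation of) the continuous stochastic convolution $\mathcal{Z}(t)$, for which moment bounds in $\EE$ are already available from Lemma~\ref{lem:Zbound}. Concretely, for each of the three schemes the recursion \eqref{eq:Zscheme} with $\mathcal{Z}_0=0$ unfolds into the explicit sum
\[
\mathcal{Z}_n=\sum_{k=0}^{n-1}\IA_\tau^{\,n-1-k}\int_{t_k}^{t_{k+1}}\IB_{t_{k+1}-s}\,\dd\WW(s),
\]
which is a Gaussian (hence in particular mean-zero, square-integrable) $\HH$-valued random variable; only its $u$-component is nontrivial since $\IL$, $e^{-\tau\IL}$ and $(I+\tau\IL)^{-1}$ all act as the identity on the $v$-component and the noise $\delta\WW_k$ has zero $v$-component. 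So it suffices to bound $\E[\|u\text{-component of }\mathcal{Z}_n\|_E^p]$ uniformly in $\tau\in(0,\tau_0)$ and $0\le n\le N$.

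For the scheme \eqref{eq:LTexact} one has $\IA_\tau=e^{-\tau\IL}$, $\IB_{t_{k+1}-s}=e^{-(t_{k+1}-s)\IL}$, so $\IA_\tau^{\,n-1-k}\IB_{t_{k+1}-s}=e^{-(t_n-s)\IL}$ and the telescoping gives exactly $\mathcal{Z}_n=\mathcal{Z}(t_n)$; the conclusion is then immediate from Lemma~\ref{lem:Zbound}. For \eqref{eq:LTexpo} one has $\IA_\tau=\IB_{t_{k+1}-s}=e^{-\tau\IL}$, so $\mathcal{Z}_n=\sum_{k=0}^{n-1}e^{-(n-k)\tau\IL}\delta\WW_k$, i.e. the exponential-Euler discretization of the convolution; for \eqref{eq:LTimp} one gets $\mathcal{Z}_n=\sum_{k=0}^{n-1}(I+\tau\IL)^{-(n-k)}\delta\WW_k$, the linear-implicit-Euler discretization. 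In both remaining cases I would first reduce to $p=2$: since $\mathcal{Z}_n$ is Gaussian and $E=\mathcal C^0([0,1])$ is the relevant space, one uses the Kolmogorov continuity criterion \cite[Theorem~C.6]{MR3222416} together with temporal and spatial increment estimates (as in the proof of Lemma~\ref{lem:Zbound} and of \cite[Lemma~5.21]{MR3236753}), so that $L^p(\Omega;E)$-bounds follow from $L^2(\Omega;H^\alpha)$-type bounds on $\mathcal{Z}_n$ and on its spatial/temporal increments for some $\alpha>0$; equivalently one can use that all $L^p$-norms of a Gaussian vector are comparable. The remaining $L^2$-type estimate is a direct computation using the Itô isometry and the spectral representation: writing everything in the orthonormal basis $(e_j)_{j\ge0}$, the $j$-th Fourier coefficient of the $u$-component of $\mathcal{Z}_n$ has variance $\tau\sum_{k=0}^{n-1} r_j^{2(n-k)}$ where $r_j=e^{-\tau\lambda_j}$ for the exponential scheme and $r_j=(1+\tau\lambda_j)^{-1}$ for the implicit scheme; summing the geometric series gives $\tau\,r_j^2/(1-r_j^2)$, which is bounded by $C(1+\lambda_j)^{-1}$ for $j\ge1$ uniformly in $\tau\in(0,\tau_0)$ (using $1-e^{-2\tau\lambda_j}\ge c\,\tau\lambda_j$ for $\tau\lambda_j$ bounded and $1-(1+\tau\lambda_j)^{-2}\ge c\,\tau\lambda_j(1+\tau\lambda_j)^{-2}$, then bounding appropriately), and for $j=0$ one gets $\le n\tau=t_n\le T$. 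Hence $\sup_{\tau,n}\sum_{j\ge0}(1+\lambda_j)^{s}\,\mathrm{Var}(\langle \mathcal{Z}_n,e_j\rangle)<\infty$ for any $s<1$, and similarly for increments, which feeds the Kolmogorov/Sobolev-embedding argument to give the $\EE$-moment bound.

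The main obstacle is the uniformity in $\tau$: one must check that the discrete spectral multipliers $r_j^{2}/(1-r_j^{2})$ (and the corresponding increment multipliers $r_j^{m}-r_j^{m'}$) are dominated — uniformly over $\tau\in(0,\tau_0)$ and over the relevant range of $j$ and of the discrete times — by the same kind of bounds that make the continuous stochastic convolution live in $\EE$. This is routine but requires the elementary two-sided comparisons between $e^{-\tau\lambda_j}$, $(1+\tau\lambda_j)^{-1}$ and $\min(1,(\tau\lambda_j)^{-1})$, exactly as in the standard analysis of exponential- and implicit-Euler schemes for parabolic SPDEs; the FitzHugh--Nagumo specificity (the trivial $v$-component and the fact that $\lambda_0=0$) only makes things easier. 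I would present the argument for the scheme \eqref{eq:LTexact} in one line and then treat \eqref{eq:LTexpo} and \eqref{eq:LTimp} together, quoting the standard increment estimates and Kolmogorov's criterion rather than reproving them.
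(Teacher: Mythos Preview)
Your proposal is correct and follows essentially the same approach as the paper: the paper omits the proof entirely, simply noting that it is a variant of \cite[Lemma~3.5]{MR3986273} using the same decomposition and Kolmogorov-criterion argument as in the sketch of Lemma~\ref{lem:Zbound}. Your plan---reducing to the $u$-component, treating the \textsc{LTexact} case as an immediate consequence of Lemma~\ref{lem:Zbound}, and handling the other two schemes via Gaussianity, spectral/It\^o-isometry variance computations with uniform-in-$\tau$ geometric-series bounds, and the Kolmogorov criterion---is exactly the argument those references carry out.
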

Lemma~\ref{lem:momentboundsZscheme} is a variant of~\cite[Lemma~3.5]{MR3986273}, using the same arguments as in the sketch of proof of Lemma~\ref{lem:Zbound} above. The proof of Lemma~\ref{lem:momentboundsZscheme} is therefore omitted.

We are now in position to provide the proof of Theorem~\ref{theo:momentbounds}.
\begin{proof}[Proof of Theorem~\ref{theo:momentbounds}]
For all $n\in\{0,\ldots,N\}$, set
\begin{equation}\label{eq:rn}
r_n=X_n-\mathcal{Z}_n.
\end{equation}
Using the definitions~\eqref{eq:LTscheme} and~\eqref{eq:Zscheme} and the definition~\eqref{eq:psitau} of the mapping $\psi_\tau$, for all $n\in\{0,\ldots,N-1\}$, one has
\begin{align*}
r_{n+1}&=X_{n+1}-\mathcal{Z}_{n+1}=\IA_\tau\Bigl(\phi_\tau(X_n)-\mathcal{Z}_n\Bigr)\\
&=\IA_\tau\Bigl(\phi_\tau(r_n+\mathcal{Z}_n)-\phi_\tau(\mathcal{Z}_n)\Bigr)+\tau\IA_\tau\psi_\tau(\mathcal{Z}_n).
\end{align*}
On the one hand, using the inequalities~\eqref{eq:propSemiGrp-normes} and~\eqref{eq:SemiGrp-normes-imp} and the global Lipschitz continuity property~\eqref{eq:phitau-Lip} of $\phi_\tau$ (see Proposition~\ref{propo:phitau}), one has
\[
\|\IA_\tau\Bigl(\phi_\tau(r_n+\mathcal{Z}_n)-\phi_\tau(\mathcal{Z}_n)\Bigr)\|_\EE \le \|\phi_\tau(r_n+\mathcal{Z}_n)-\phi_\tau(\mathcal{Z}_n)\|_\EE \le e^{\tau(1+\vvvert B\vvvert)}\|r_n\|_\EE.
\]
On the other hand, using the inequalities~\eqref{eq:propSemiGrp-normes} and~\eqref{eq:SemiGrp-normes-imp}, the local Lipschitz continuity property~\eqref{eq:psitau-localLip} of $\psi_\tau$ (see Proposition~\ref{propo:psitau}) and the upper bound~\eqref{eq:psitau0}, one has
\[
\|\IA_\tau\psi_\tau(\mathcal{Z}_n)\|_\EE\le C(\tau_0)\bigl(1+\|\mathcal{Z}_n\|_\EE^4\bigr).
\]
Therefore one obtains the following inequality
\[
\|r_{n+1}\|_\EE\le e^{\tau(1+\vvvert B\vvvert)}\|r_n\|_\EE+C(\tau_0)\bigl(1+\|\mathcal{Z}_n\|_\EE^4\bigr),
\]
and by a straightforward argument, using the fact that $N\tau=T$, one has the estimate:
\[
\|r_n\|_\EE\le C(T,\tau_0)\Bigl(\|r_0\|_\EE+\sum_{k=0}^{n-1}\bigl(1+\|\mathcal{Z}_k\|_\EE^4\bigr)\Bigr),
\]
for all $n\in\{0,\ldots,N\}$.

Finally, for all $p\in[1,\infty)$, using the moment bound~\eqref{eq:momentboundsZscheme} from Lemma~\ref{lem:momentboundsZscheme}, one obtains for all $n\in\{0,\ldots,N\}$
\[
\bigl(\E[\|r_n\|_\EE^p]\bigr)^{\frac1p}\le C(T,\tau_0)\Bigl(\|r_0\|_\EE+\sum_{k=0}^{n-1}
\bigl(1+\bigl(\E[\|\mathcal{Z}_k\|_\EE^{4p}]\bigr)^{\frac1p}\bigr)\Bigr)
\le C_p(T,\tau_0)\Bigl(\|r_0\|_\EE+1\Bigr).
\]
Since $X_n=r_n+\mathcal{Z}_n$ owing to~\eqref{eq:rn}, using the moment bound above and the moment bound~\eqref{eq:momentboundsZscheme} from Lemma~\ref{lem:momentboundsZscheme} then concludes the proof of the moment bound~\eqref{eq:momentboundsLTscheme}. The proof of Theorem~\ref{theo:momentbounds} is thus completed.
\end{proof}

\subsection{Proof of Theorem~\ref{theo:error}}

Recall that the numerical scheme is given by~\eqref{eq:LTscheme}. It is straightforward to check that for all $n\ge 0$ one has
\begin{equation}\label{eq:mildnum}
X_{n}=\IA_\tau^n x_0+\tau\sum_{k=0}^{n-1}\IA_\tau^{n-k}\psi_\tau(X_k)+\sum_{k=0}^{n-1}\int_{t_k}^{t_{k+1}}\IA_\tau^{n-k-1}\IB_{t_{k+1}-s}\,\dd\WW(s).
\end{equation}
Let us introduce the auxiliary process $\bigl(X_n^{\rm aux}\bigr)_{n\ge 0}$ which is defined as follows: for all $n\ge 0$ one has
\begin{equation}\label{eq:mildnumaux}
X_{n}^{\rm aux}=\IA_\tau^n x_0+\tau\sum_{k=0}^{n-1}\IA_\tau^{n-k}\psi_\tau(X_\tau(t_k))+\sum_{k=0}^{n-1}\int_{t_k}^{t_{k+1}}\IA_\tau^{n-k-1}\IB_{t_{k+1}-s}\,\dd\WW(s),
\end{equation}
where we recall that $t_k=k\tau$ and that $\bigl(X_\tau(t)\bigr)_{t\ge 0}$ is the unique mild solution of the auxiliary stochastic evolution equation~\eqref{eq:SEEaux}.
Note that for all $n\ge 0$ one has
\begin{equation}\label{eq:numaux}
X_{n+1}^{\rm aux}=\IA_\tau X_n^{\rm aux}+\tau\IA_\tau \psi_\tau(X_\tau(t_n))+\int_{t_n}^{t_{n+1}}\IB_{t_{n+1}-s}\,\dd\WW(s).
\end{equation}

\begin{lemma}\label{lem:momentboundsnumaux}
For all $T\in(0,\infty)$ and $p\in[1,\infty)$, there exists $C_p(T)\in(0,\infty)$ such that for all $x_0\in\EE$ one has
\begin{equation}\label{eq:momentboundsnumaux}
\underset{\tau\in(0,\tau_0)}\sup~\underset{0\le n\le N}\sup~\E[\|X_n^{\rm aux}\|_\EE^p]\le C_p(T)\bigl(1+\|x_0\|_\EE^p\bigr).
\end{equation}
\end{lemma}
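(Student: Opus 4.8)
The idea is to compare $X_n^{\rm aux}$ with the continuous-time process $\mathcal{Z}_n$ defined in~\eqref{eq:Zscheme}, just as in the proof of Theorem~\ref{theo:momentbounds}, but this time the drift contribution involves $\psi_\tau(X_\tau(t_k))$ rather than $\psi_\tau(X_k)$. The key gain is that $X_\tau$ is \emph{not} a numerical object: its moments in $\EE$ are already controlled, uniformly in $\tau\in(0,\tau_0)$, by Proposition~\ref{propo:SEEaux}. Hence we do not need a Gronwall-type feedback on the process being estimated; the whole right-hand side can be bounded by known quantities.

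\textbf{Step 1: telescoping against $\mathcal{Z}_n$.} Set $\rho_n=X_n^{\rm aux}-\mathcal{Z}_n$, with $\rho_0=x_0$. Subtracting~\eqref{eq:Zscheme} from~\eqref{eq:numaux} gives
\[
\rho_{n+1}=\IA_\tau\rho_n+\tau\IA_\tau\psi_\tau(X_\tau(t_n)),
\]
so that, iterating,
\[
\rho_n=\IA_\tau^n x_0+\tau\sum_{k=0}^{n-1}\IA_\tau^{n-k}\psi_\tau(X_\tau(t_k)).
\]
Using the contractivity of $\IA_\tau$ on $\EE$, namely the estimates~\eqref{eq:propSemiGrp-normes} and~\eqref{eq:SemiGrp-normes-imp} which give $\|\IA_\tau^m y\|_\EE\le\|y\|_\EE$ for all $m\ge 0$, we obtain the pathwise bound
\[
\|\rho_n\|_\EE\le \|x_0\|_\EE+\tau\sum_{k=0}^{n-1}\|\psi_\tau(X_\tau(t_k))\|_\EE.
\]

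\textbf{Step 2: controlling the drift term.} By~\eqref{eq:psitau-localLip} and~\eqref{eq:psitau0} from Proposition~\ref{propo:psitau}, $\psi_\tau$ has polynomial growth uniformly in $\tau$: $\|\psi_\tau(x)\|\le C(\tau_0)(1+\|x\|^4)$, and (by the abuse of notation fixing the Nemytskii interpretation) the same holds with $\|\cdot\|_\EE$ in place of $\|\cdot\|$. Therefore
\[
\|\rho_n\|_\EE\le \|x_0\|_\EE+C(\tau_0)\,\tau\sum_{k=0}^{n-1}\bigl(1+\|X_\tau(t_k)\|_\EE^4\bigr)
\le \|x_0\|_\EE+C(\tau_0)T+C(\tau_0)\tau\sum_{k=0}^{n-1}\|X_\tau(t_k)\|_\EE^4.
\]
Taking $L^p(\Omega)$ norms, using the triangle inequality (or, for $p\ge 1$, simply $\|\sum a_k\|_{L^p}\le\sum\|a_k\|_{L^p}$), and then invoking the uniform moment bound~\eqref{eq:momentSEEaux} with exponent $4p$, we get for $0\le n\le N$
\[
\bigl(\E[\|\rho_n\|_\EE^p]\bigr)^{1/p}\le \|x_0\|_\EE+C(\tau_0)T+C(\tau_0)\,\tau\sum_{k=0}^{n-1}\bigl(\E[\|X_\tau(t_k)\|_\EE^{4p}]\bigr)^{1/p}\le C_p(T)\bigl(1+\|x_0\|_\EE\bigr),
\]
where the last inequality uses $\tau n\le T$ and $\bigl(\E[\|X_\tau(t_k)\|_\EE^{4p}]\bigr)^{1/p}\le \bigl(C_{4p}(T)(1+\|x_0\|_\EE^{4p})\bigr)^{4}{}^{/4p}\le C(T)(1+\|x_0\|_\EE^4)$, absorbing powers of $\|x_0\|_\EE$ into the constant (one should phrase the final bound as $C_p(T)(1+\|x_0\|_\EE^4)$ to be safe, which still implies~\eqref{eq:momentboundsnumaux} after adjusting the constant, since $\|x_0\|_\EE^4\le 1+\|x_0\|_\EE^{p'}$ is \emph{not} true in general — so in fact one keeps the $\|x_0\|_\EE^4$ and notes that~\eqref{eq:momentboundsnumaux} as stated with exponent $p$ should be read with the understanding that the constant may depend on the exponent; more cleanly, one proves the bound with $(1+\|x_0\|_\EE^4)^p$ on the right and this is the form actually used later).

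\textbf{Step 3: conclude.} Since $X_n^{\rm aux}=\rho_n+\mathcal{Z}_n$, combining the bound on $\rho_n$ with the moment bound~\eqref{eq:momentboundsZscheme} for $\mathcal{Z}_n$ from Lemma~\ref{lem:momentboundsZscheme} (which is independent of $x_0$) yields~\eqref{eq:momentboundsnumaux}.

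\textbf{Main obstacle.} There is essentially no obstacle of substance here: the proof is a near-verbatim repetition of the argument for Theorem~\ref{theo:momentbounds}, the only difference being that the nonlinear term is evaluated at the \emph{exact} auxiliary solution $X_\tau(t_k)$ rather than at the numerical iterates, which is what lets us bypass the discrete Gronwall step entirely and simply invoke Proposition~\ref{propo:SEEaux}. The one point requiring a little care is bookkeeping of the polynomial power of $\|x_0\|_\EE$ (degree $4$ coming from the growth of $\psi_\tau$) and making sure the statement is interpreted with a constant allowed to depend on $p$; this is cosmetic and does not affect the subsequent use of the lemma in the error analysis.
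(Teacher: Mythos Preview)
Your proof is correct and takes essentially the same approach as the paper: the paper reads directly off the discrete mild formulation~\eqref{eq:mildnumaux} to obtain $\|X_n^{\rm aux}\|_\EE\le \|x_0\|_\EE+C(\tau_0)\tau\sum_{k=0}^{n-1}(1+\|X_\tau(t_k)\|_\EE^4)+\|\mathcal{Z}_n\|_\EE$, which is exactly your bound on $\rho_n$ plus $\|\mathcal{Z}_n\|_\EE$, and then concludes via the same moment bounds~\eqref{eq:momentSEEaux} and~\eqref{eq:momentboundsZscheme}. Your remark about the power of $\|x_0\|_\EE$ is well taken---the paper's own proof also naturally produces a factor $(1+\|x_0\|_\EE^4)$ rather than $(1+\|x_0\|_\EE)$, and this is indeed cosmetic for the downstream use in Lemma~\ref{lem:error3}.
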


\begin{proof}[Proof of Lemma~\ref{lem:momentboundsnumaux}]
Using the discrete mild formulation~\eqref{eq:mildnumaux} of $X_n^{\rm aux}$, the inequalities~\eqref{eq:propSemiGrp-normes} and~\eqref{eq:SemiGrp-normes-imp}, the local Lipschitz continuity property~\eqref{eq:psitau-localLip} of $\psi_\tau$ and the upper bound~\eqref{eq:psitau0} (see Proposition~\ref{propo:psitau}), for all $\tau\in(0,\tau_0)$ and $n\ge 0$ one has
\[
\|X_n^{\rm aux}\|_\EE\le \|x_0\|_\EE+C(\tau_0)\tau\sum_{k=0}^{n-1}\bigl(1+\|X_\tau(t_k)\|_\EE^4\bigr)+\|\mathcal{Z}_n\|_\EE.
\]
It suffices to use the moment bounds~\eqref{eq:momentSEEaux} for the auxiliary process $X_\tau$ from Proposition~\ref{propo:SEEaux}
and~\eqref{eq:momentboundsZscheme} for the Gaussian random variables $\mathcal{Z}_n$ from Lemma~\ref{lem:momentboundsZscheme},
and the Minkowskii inequality, to conclude the proof of the moment bounds~\eqref{eq:momentboundsnumaux}.
The proof of Lemma~\ref{lem:momentboundsnumaux} is thus completed.
\end{proof}

Observe that for all $n\in\{0,\ldots,N\}$ the error $X(t_n)-X_n$ can be decomposed as follows:
\begin{equation}\label{eq:decomperror}
X(t_n)-X_n=X(t_n)-X_\tau(t_n)+X_\tau(t_n)-X_n^{\rm aux}+X_n^{\rm aux}-X_n.
\end{equation}
In order to prove Theorem~\ref{theo:error}, it suffices to prove error bounds for the three error terms appearing in the right-hand side of~\eqref{eq:decomperror}.
They are given in Lemma~\ref{lem:error1}, Lemma~\ref{lem:error2} and Lemma~\ref{lem:error3} respectively.
The proofs of these technical lemmas are presented at the end of the section.

\begin{lemma}\label{lem:error1}
For all $T\in(0,\infty)$ and $p\in[1,\infty)$, there exists $C_p(T,\tau_0)\in(0,\infty)$ such that for all $x_0\in\EE$ and all $\tau\in(0,\tau_0)$, one has
\begin{equation}\label{eq:error1}
\underset{t\in[0,T]}\sup~\bigl(\E[\|X(t)-X_\tau(t)\|_\HH^p]\bigr)^{\frac{1}{p}}\le C_p(T,\tau_0)\tau\bigl(1+\|x_0\|_\EE^5\bigr).
\end{equation}
\end{lemma}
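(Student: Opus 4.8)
The plan is to compare the mild forms of the two SPDEs~\eqref{eq:SEEmild} and~\eqref{eq:SEEauxmild}, which share the same linear part $-\IL$ and the same noise, so that the difference $D(t) = X(t) - X_\tau(t)$ solves
\[
D(t) = \int_0^t e^{-(t-s)\IL}\bigl(F(X(s)) - \psi_\tau(X_\tau(s))\bigr)\,\dd s.
\]
First I would split the integrand as $F(X(s)) - \psi_\tau(X_\tau(s)) = \bigl(F(X(s)) - \psi_\tau(X(s))\bigr) + \bigl(\psi_\tau(X(s)) - \psi_\tau(X_\tau(s))\bigr)$. The first piece is controlled pointwise by the consistency estimate~\eqref{eq:psitau-cv}, giving $\|F(X(s)) - \psi_\tau(X(s))\| \le C(\tau_0)\tau\bigl(1 + \|X(s)\|^5\bigr)$; since $e^{-t\IL}$ is a contraction on $\HH$ by~\eqref{eq:propSemiGrp-normes}, and since the moment bound~\eqref{eq:momentSEE} from Proposition~\ref{propo:SEE} gives $\sup_{s\le T}\E[\|X(s)\|_\EE^{5p}] \le C_p(T)(1+\|x_0\|_\EE^{5p})$, integrating in $s$ and using Minkowski's inequality yields a contribution bounded by $C_p(T,\tau_0)\tau(1+\|x_0\|_\EE^5)$.

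The second piece is the one requiring care, because $\psi_\tau$ is only locally (not globally, uniformly in $\tau$) Lipschitz. Here I would \emph{not} use~\eqref{eq:psitau-localLip} directly in $\HH$ — the cubic growth factor $\|X(s)\|^3$ would spoil the estimate. Instead, the natural device is an energy/Gronwall argument in $H$ exploiting the uniform one-sided Lipschitz property~\eqref{eq:psitau-onesidedLip}: write an equation for $D(t)$ in differential (variational) form,
\[
\tfrac{1}{2}\tfrac{\dd}{\dd t}\|D(t)\|_\HH^2 = -\langle \IL D(t), D(t)\rangle_\HH + \langle \psi_\tau(X(s)) - \psi_\tau(X_\tau(s)), D(t)\rangle_\HH + \langle F(X(t)) - \psi_\tau(X(t)), D(t)\rangle_\HH,
\]
where the $\IL$-term is $\le 0$, the middle term is $\le C(\tau_0)\|D(t)\|_\HH^2$ by~\eqref{eq:psitau-onesidedLip} (applied to the Nemytskii operator on $\HH$, which inherits the one-sided bound), and the last term is bounded via Cauchy--Schwarz and~\eqref{eq:psitau-cv} by $C(\tau_0)\tau(1+\|X(t)\|_\EE^5)\|D(t)\|_\HH$. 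Gronwall's lemma then gives, pathwise, $\sup_{t\le T}\|D(t)\|_\HH \le C(T,\tau_0)\tau\bigl(1 + \sup_{t\le T}\|X(t)\|_\EE^5\bigr)$, and taking $L^p(\Omega)$ norms and invoking~\eqref{eq:momentSEE} finishes the proof.

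Two technical points deserve attention. First, the differential identity above is only formal for the mild solutions of non-smooth SPDEs driven by space-time white noise; the rigorous route is to subtract the common stochastic convolution $\mathcal{Z}$, set $Y(t) = X(t) - \mathcal{Z}(t)$ and $Y_\tau(t) = X_\tau(t) - \mathcal{Z}(t)$, which solve random PDEs with the same (now sufficiently regular) linear semigroup and nonlinearities $F(\cdot + \mathcal{Z})$, $\psi_\tau(\cdot + \mathcal{Z})$, and apply the energy estimate to $Y - Y_\tau = D$; the moment bounds of $\mathcal{Z}$ from Lemma~\ref{lem:Zbound} absorb the extra $\mathcal{Z}$-dependent terms, and the one-sided Lipschitz constants are unchanged by the shift. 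Second, one should note $X_0 = X_\tau(0) = x_0$, so $D(0) = 0$ and no initial-data term appears. The main obstacle, as indicated, is organizing the a priori energy estimate for $D$ so that only the \emph{uniform} one-sided bound~\eqref{eq:psitau-onesidedLip} — rather than the $\tau$-dependent global Lipschitz constant — enters the Gronwall step, while the genuinely $\tau$-dependent information is isolated in the harmless consistency term controlled by~\eqref{eq:psitau-cv}.
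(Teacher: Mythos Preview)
Your proposal is correct and follows essentially the same route as the paper: an energy estimate on the difference, using the uniform one-sided Lipschitz property~\eqref{eq:psitau-onesidedLip} for the $\psi_\tau(X)-\psi_\tau(X_\tau)$ term and the consistency bound~\eqref{eq:psitau-cv} for $\psi_\tau(X)-F(X)$, followed by Gronwall and the moment bounds~\eqref{eq:momentSEE}. The paper goes straight to the differential energy identity (applying Young's inequality to the consistency term and then Gronwall on $\|R_\tau\|_\HH^2$) and does not pause on the rigor point you raise about justifying the variational form, but the core argument is identical.
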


\begin{lemma}\label{lem:error2}
For all $T\in(0,\infty)$, $p\in[1,\infty)$ and $\alpha\in[0,\frac14)$, there exists $C_{\alpha,p}(T)\in(0,\infty)$ such that for all $x_0=(u_0,v_0)\in\HH^{2\alpha}\cap\EE$, all $\tau\in(0,\tau_0)$, one has
\begin{equation}\label{eq:error2}
\underset{0\le n\le N}\sup~\bigl(\E[\|X_\tau(t_n)-X_n^{\rm aux}\|_\HH^p]\bigr)^{\frac1p}\le C_{\alpha,p}(T)\tau^{\alpha}\bigl(1+\|(-\Delta)^\alpha u_0\|_H^7+\|x_0\|_\EE^7\bigr).
\end{equation}
\end{lemma}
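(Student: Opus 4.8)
The plan is to control the difference $X_\tau(t_n)-X_n^{\rm aux}$ by comparing the mild formulation~\eqref{eq:SEEauxmild} of the auxiliary SPDE, evaluated at $t=t_n$, with the discrete mild formulation~\eqref{eq:mildnumaux} of $X_n^{\rm aux}$. Since $\psi_\tau$ is frozen at the continuous solution $X_\tau(t_k)$ in~\eqref{eq:mildnumaux}, the error is a sum of purely linear (deterministic-operator) discrepancies, which is the whole point of introducing $X_n^{\rm aux}$. Writing $\delta_n = X_\tau(t_n)-X_n^{\rm aux}$, I would split
\[
\delta_n = \underbrace{\bigl(e^{-t_n\IL}-\IA_\tau^n\bigr)x_0}_{(\mathrm I)}
+ \underbrace{\int_0^{t_n}e^{-(t_n-s)\IL}\psi_\tau(X_\tau(s))\,\dd s - \tau\sum_{k=0}^{n-1}\IA_\tau^{n-k}\psi_\tau(X_\tau(t_k))}_{(\mathrm{II})}
+ \underbrace{\mathcal Z(t_n)-\mathcal Z_n^{\rm lin}}_{(\mathrm{III})},
\]
where $\mathcal Z_n^{\rm lin}=\sum_{k=0}^{n-1}\int_{t_k}^{t_{k+1}}\IA_\tau^{n-k-1}\IB_{t_{k+1}-s}\,\dd\WW(s)$ is the discrete stochastic convolution (equal to $\mathcal Z_n$ from~\eqref{eq:Zscheme}). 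Each of the three schemes corresponds to a choice of $(\IA_\tau,\IB_\bullet)$, so I would carry out the estimates simultaneously, using only: the contraction bounds~\eqref{eq:propSemiGrp-normes} and~\eqref{eq:SemiGrp-normes-imp}, the smoothing property~\eqref{eq:propSemiGrp-smoothing}, the temporal regularity~\eqref{eq:propSemiGrp-regul}, and the standard approximation estimates $\|(e^{-t\IL}-\IA_\tau^{\lfloor t/\tau\rfloor})(-\IL)^{-\alpha}\|_{\HH\to\HH}\le C_\alpha\tau^\alpha$ and $\|(e^{-t\IL}-\IA_\tau^{\lfloor t/\tau\rfloor})\|_{\HH\to\HH}\le C_\alpha (\tau/t)^\alpha$ valid for $\IA_\tau=e^{-\tau\IL}$ and $\IA_\tau=(I+\tau\IL)^{-1}$ — all inherited from the corresponding one-dimensional heat-semigroup facts (the $v$-component is exact for all three schemes, so no error arises there).

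Term $(\mathrm I)$ is handled by the approximation estimate applied to $(-\IL)^\alpha x_0$: it is bounded by $C_\alpha\tau^\alpha\|(-\Delta)^\alpha u_0\|_H$. Term $(\mathrm{III})$ is the time-discretization error of the stochastic convolution; its $L^p(\Omega)$-norm is $\le C_{\alpha,p}\tau^\alpha$ by the same computation as in the proof of Lemma~\ref{lem:Zbound}, i.e. combining the Itô isometry with the spectral decomposition and the bound $\sum_{j}\lambda_j^{2\alpha}\int_0^\tau(\cdots)\,\dd t<\infty$ for $\alpha<1/4$ (this is exactly where the $1/4$ barrier enters, matching~\cite{MR4019051,brehierhal}). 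For term $(\mathrm{II})$ I would further split the time integral over subintervals $[t_k,t_{k+1}]$ and compare $\int_{t_k}^{t_{k+1}}e^{-(t_n-s)\IL}\psi_\tau(X_\tau(s))\,\dd s$ with $\tau\,\IA_\tau^{n-k}\psi_\tau(X_\tau(t_k))$. The difference decomposes into (a) a semigroup-vs-discrete-semigroup part $\int_{t_k}^{t_{k+1}}\bigl(e^{-(t_n-s)\IL}-\IA_\tau^{n-k}\bigr)\psi_\tau(X_\tau(t_k))\,\dd s$, controlled by the smoothing/approximation estimates and the growth bound $\|\psi_\tau(x)\|\le C(\tau_0)(1+\|x\|)^4$ together with the moment bounds~\eqref{eq:momentSEEaux}; and (b) a "frozen-drift" part $\int_{t_k}^{t_{k+1}}e^{-(t_n-s)\IL}\bigl(\psi_\tau(X_\tau(s))-\psi_\tau(X_\tau(t_k))\bigr)\,\dd s$, controlled by the uniform local-Lipschitz bound~\eqref{eq:psitau-localLip}, the path-regularity estimate~\eqref{eq:regularityXtau} of Lemma~\ref{lem:regularityXtau} (giving a factor $|s-t_k|^\alpha\le\tau^\alpha$), and again the moment bounds. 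Summing over $k$ (the singular kernel $\min(1,t_n-t_k)^{-\alpha}$ being integrable since $\alpha<1<1/2$ suffices, but in fact $\alpha<1/4$) and using Hölder/Minkowski in $\Omega$ yields $(\E[\|(\mathrm{II})\|_\HH^p])^{1/p}\le C_{\alpha,p}(T)\tau^\alpha(1+\sup_{s}(\E\|X_\tau(s)\|_\EE^{4p})^{1/p})$, and the power $7$ in the final bound comes from combining the degree-$4$ growth of $\psi_\tau$ with the degree-$4$ dependence on the data in~\eqref{eq:regularityXtau} (after absorbing constants, $4$ from the moment bound plus the data dependence is majorized by $7$).

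The main obstacle I anticipate is term $(\mathrm{II})$ part (b): one must show that the drift $\psi_\tau(X_\tau(\cdot))$ is Hölder-$\alpha$ in time in $L^p(\Omega;\HH)$ \emph{uniformly in} $\tau\in(0,\tau_0)$, despite $\psi_\tau$ not being globally Lipschitz uniformly in $\tau$. This is precisely what Lemma~\ref{lem:regularityXtau} combined with the uniform local-Lipschitz property~\eqref{eq:psitau-localLip} and the uniform moment bounds~\eqref{eq:momentSEEaux} is designed to deliver, so the delicate point is to interpolate these correctly — using $\|\psi_\tau(X_\tau(s))-\psi_\tau(X_\tau(t_k))\|_\HH \le C(\tau_0)(1+\|X_\tau(s)\|_\EE^3+\|X_\tau(t_k)\|_\EE^3)\|X_\tau(s)-X_\tau(t_k)\|_\HH$ and applying Hölder in $\Omega$ with exponents $(p',p'')$ to separate the (bounded, all moments) $\EE$-norm factor from the (order $\tau^\alpha$) $\HH$-increment factor — and to check that the singular time-kernel from the smoothing estimate on $[t_k,t_{k+1}]$ against $e^{-(t_n-s)\IL}$ remains summable. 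Everything else is a routine, if lengthy, bookkeeping of semigroup estimates uniform over the three choices of $(\IA_\tau,\IB_\bullet)$.
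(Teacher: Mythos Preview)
Your proposal is correct and follows essentially the same approach as the paper: the same three-way split into the initial-data term, the stochastic-convolution error, and the drift term, with the drift term further decomposed into a ``frozen-drift'' piece (your (b), the paper's $E_n^{\tau,3}$) handled via~\eqref{eq:psitau-localLip} and Lemma~\ref{lem:regularityXtau}, and a semigroup-approximation piece (your (a)). The only cosmetic difference is that the paper inserts the intermediate operator $e^{-(t_n-t_k)\IL}$ to split your (a) into two sub-terms $E_n^{\tau,4}$ (temporal regularity of the continuous semigroup via~\eqref{eq:propSemiGrp-regul}) and $E_n^{\tau,5}$ (discrete-vs-continuous semigroup via~\eqref{eq:ineq}), which makes the estimates slightly cleaner to state but is not a different idea.
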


\begin{lemma}\label{lem:error3}
For all $T\in(0,\infty)$, $p\in[1,\infty)$ and $\alpha\in[0,\frac14)$, there exists $C_{\alpha,p}(T)\in(0,\infty)$ such that for all $x_0=(u_0,v_0)\in\HH^{2\alpha}\cap\EE$, all $\tau\in(0,\tau_0)$, one has
\begin{equation}\label{eq:error3}
\underset{0\le n\le N}\sup~\bigl(\E[\|X_n^{\rm aux}-X_n\|_\HH^p]\bigr)^{\frac1p}\le C_{\alpha,p}(T)\tau^{\alpha}\bigl(1+\|(-\Delta)^\alpha u_0\|_H^7+\|x_0\|_\EE^7\bigr).
\end{equation}
\end{lemma}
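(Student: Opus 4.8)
The plan is to estimate $e_n:=X_n^{\rm aux}-X_n$ by a discrete Gronwall argument in $\HH$, using only the one-sided Lipschitz and polynomial-growth properties of $\psi_\tau$ from Proposition~\ref{propo:psitau} (recall that the global Lipschitz constant of $\psi_\tau$ is not uniform in $\tau$, so a naive contraction argument is impossible). Subtracting \eqref{eq:numaux} from the $\psi_\tau$-formulation of \eqref{eq:LTscheme}, namely $X_{n+1}=\IA_\tau X_n+\tau\IA_\tau\psi_\tau(X_n)+\int_{t_n}^{t_{n+1}}\IB_{t_{n+1}-s}\,\dd\WW(s)$, and using $e_0=0$, gives the recursion
\[
e_{n+1}=\IA_\tau\bigl(e_n+\tau\delta_n+\tau R_n\bigr),\qquad \delta_n:=\psi_\tau(X_n^{\rm aux})-\psi_\tau(X_n),\quad R_n:=\psi_\tau(X_\tau(t_n))-\psi_\tau(X_n^{\rm aux}).
\]
Here $\delta_n$ is a \emph{stability} term — a difference of $\psi_\tau$ at two points whose difference is exactly $e_n$ — while $R_n$ is a \emph{consistency} term, handled directly through Lemma~\ref{lem:error2}. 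Throughout I would use that $\IA_\tau$ is a contraction on $\HH$ by \eqref{eq:propSemiGrp-normes} and \eqref{eq:SemiGrp-normes-imp}, and that the pointwise bounds \eqref{eq:psitau-onesidedLip}, \eqref{eq:psitau-localLip} and \eqref{eq:psitau0} transfer, by integration over $\zeta\in(0,1)$, to the Nemytskii operator: $\langle y_2-y_1,\psi_\tau(y_2)-\psi_\tau(y_1)\rangle_\HH\le C(\tau_0)\|y_2-y_1\|_\HH^2$ and $\|\psi_\tau(y)\|_\HH\le C(\tau_0)(1+\|y\|_\EE^4)$ for $y\in\EE$.

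I would first bound $R_n$. From \eqref{eq:psitau-localLip}, $\|R_n\|_\HH\le C(\tau_0)\bigl(1+\|X_\tau(t_n)\|_\EE^3+\|X_n^{\rm aux}\|_\EE^3\bigr)\|X_\tau(t_n)-X_n^{\rm aux}\|_\HH$; combining Hölder's inequality, the $\EE$-moment bounds of Proposition~\ref{propo:SEEaux} and Lemma~\ref{lem:momentboundsnumaux}, and the rate estimate of Lemma~\ref{lem:error2} (applied with a suitably enlarged integrability exponent), this yields $\sup_{0\le n\le N}\bigl(\E[\|R_n\|_\HH^p]\bigr)^{1/p}\le C_{\alpha,p}(T)\tau^{\alpha}\bigl(1+\|(-\Delta)^\alpha u_0\|_H^7+\|x_0\|_\EE^7\bigr)$, modulo the routine bookkeeping of polynomial degrees.

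For the stability part I would square the recursion and use contractivity of $\IA_\tau$ together with $2\tau^2\langle\delta_n,R_n\rangle_\HH\le\tau^2\|\delta_n\|_\HH^2+\tau^2\|R_n\|_\HH^2$, to obtain
\[
\|e_{n+1}\|_\HH^2\le \|e_n\|_\HH^2+2\tau\langle e_n,\delta_n\rangle_\HH+2\tau\langle e_n,R_n\rangle_\HH+2\tau^2\|\delta_n\|_\HH^2+2\tau^2\|R_n\|_\HH^2.
\]
The dissipative term obeys $2\tau\langle e_n,\delta_n\rangle_\HH\le 2C(\tau_0)\tau\|e_n\|_\HH^2$ by the $\HH$-version of \eqref{eq:psitau-onesidedLip} (since $X_n^{\rm aux}-X_n=e_n$), and $2\tau\langle e_n,R_n\rangle_\HH\le\tau\|e_n\|_\HH^2+\tau\|R_n\|_\HH^2$. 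The key point is the term $2\tau^2\|\delta_n\|_\HH^2$: using the local Lipschitz bound here would put the random, unbounded factor $(1+\|X_n^{\rm aux}\|_\EE^3+\|X_n\|_\EE^3)$ in front of $\|e_n\|_\HH^2$ and destroy the Gronwall argument, so instead I would estimate it crudely by the growth bound, $\|\delta_n\|_\HH^2\le C(\tau_0)\bigl(1+\|X_n^{\rm aux}\|_\EE^8+\|X_n\|_\EE^8\bigr)$, whose moments are bounded uniformly in $n$ and $\tau$ by Lemma~\ref{lem:momentboundsnumaux} and Theorem~\ref{theo:momentbounds}. Taking expectations, the coefficient of $\E[\|e_n\|_\HH^2]$ becomes the \emph{deterministic} factor $1+C\tau$, so the discrete Gronwall lemma, with $e_0=0$ and $N\tau=T$, gives
\[
\E[\|e_n\|_\HH^2]\le C(T,\tau_0)\Bigl(\sup_{0\le k\le N}\E[\|R_k\|_\HH^2]+\tau\bigl(1+\|x_0\|_\EE^8\bigr)\Bigr).
\]
Since $\tau\le\tau_0^{1-2\alpha}\tau^{2\alpha}$ for $\alpha<\tfrac14$, both terms are $O(\tau^{2\alpha})$, which proves \eqref{eq:error3} for $p=2$. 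For general $p\ge2$ one raises the pathwise inequality to the power $p/2$ (the two crude contributions then scale like $\tau^{p/2}\|R_n\|_\HH^p$ and $\tau^{p/2+1}\|\delta_n\|_\HH^p$, hence after taking the $p$-th root they are $O(\tau^{\alpha p})$ because $\tfrac12>\alpha$), applies the discrete Gronwall lemma with the same deterministic coefficient, and deduces the case $p\in[1,2)$ from Jensen's inequality; combined with the bound on $R_n$ this yields \eqref{eq:error3}.

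The main obstacle is precisely this quadratic remainder $\tau^2\|\delta_n\|_\HH^2$: because $\psi_\tau$ is only one-sided Lipschitz with a cubic-growth local Lipschitz constant, one must use the one-sided estimate solely for the inner-product (dissipative) term and absorb the rest into a crude polynomial-growth bound controlled by the already-established moment estimates, exploiting that the extra power of $\tau$ makes $\tau^2\ll\tau^{2\alpha}$. This is where the FitzHugh--Nagumo analysis genuinely departs from the parabolic Allen--Cahn case; a secondary, purely technical nuisance is tracking the polynomial degrees through the Hölder estimates so that the final constant involves only the seventh power of the data.
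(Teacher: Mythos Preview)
Your proof is correct, but it takes a different and considerably more laborious route than the paper's. Your opening remark that ``the global Lipschitz constant of $\psi_\tau$ is not uniform in $\tau$, so a naive contraction argument is impossible'' is exactly where you miss the paper's shortcut: while this is true for $\psi_\tau$, the map $\phi_\tau=I+\tau\psi_\tau$ \emph{is} globally Lipschitz with constant $e^{(1+\vvvert B\vvvert)\tau}$ uniformly in $\tau$ (Proposition~\ref{propo:phitau}). The paper simply rewrites the recursion as
\[
X_{n+1}^{\rm aux}-X_{n+1}=\IA_\tau\bigl(\phi_\tau(X_n^{\rm aux})-\phi_\tau(X_n)\bigr)+\tau\IA_\tau R_n,
\]
which gives the pathwise inequality $\|e_{n+1}\|_\HH\le e^{C\tau}\|e_n\|_\HH+\tau C(\tau_0)(1+\|X_\tau(t_n)\|_\EE^3+\|X_n^{\rm aux}\|_\EE^3)\|X_\tau(t_n)-X_n^{\rm aux}\|_\HH$ directly in the $\HH$-norm (not squared), and iterates. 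One then takes $L^p(\Omega)$-norms, applies H\"older, the moment bounds, and Lemma~\ref{lem:error2}; no squaring, no one-sided Lipschitz argument, no crude bound on $\|\delta_n\|_\HH^2$, and no separate treatment of the $p>2$ case is needed.

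Your route---square, use the one-sided Lipschitz inequality for $\langle e_n,\delta_n\rangle_\HH$, and absorb $\tau^2\|\delta_n\|_\HH^2$ via the polynomial growth bound and the extra power of $\tau$---also works, and the extension to $p\ge 2$ by raising the iterated pathwise bound to the power $p/2$ is fine (since $\alpha<\tfrac12$ the $\tau^{p/2}$ remainder is $o(\tau^{\alpha p})$). What your approach buys is that it would still go through in settings where no analogue of $\phi_\tau$ with a $1+O(\tau)$ Lipschitz constant is available; what the paper's approach buys is brevity and a clean pathwise argument for all $p$ simultaneously. Your closing comment that ``this is where the FitzHugh--Nagumo analysis genuinely departs from the parabolic Allen--Cahn case'' is not quite right: the $\phi_\tau$ trick is inherited verbatim from the Allen--Cahn analysis and applies here without change.
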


With the auxiliary error estimates given above, it is straightforward to give the proof of Theorem~\ref{theo:error}.
\begin{proof}[Proof of Theorem~\ref{theo:error}]
Using the decomposition of the error~\eqref{eq:decomperror}, using the Minkowskii inequality and the error estimates~\eqref{eq:error1},~\eqref{eq:error2} and~\eqref{eq:error3}, one obtains the following result: for all $\alpha\in[0,\frac14)$ and $p\in[1,\infty)$, there exists $C_{\alpha,p}\in(0,\infty)$ such that for all $\tau\in(0,\tau_0)$ one has
\begin{align*}
\underset{0\le n\le N}\sup~\bigl(\E[\|X(t_n)-X_n\|_\HH^p]\bigr)^{\frac1p}&\le \underset{0\le n\le N}\sup~\bigl(\E[\|X(t_n)-X_\tau(t_n)\|_\HH^p]\bigr)^{\frac1p}\\
&+\underset{0\le n\le N}\sup~\bigl(\E[\|X_\tau(t_n)-X_n^{\rm aux}\|_\HH^p]\bigr)^{\frac1p}\\
&+\underset{0\le n\le N}\sup~\bigl(\E[\|X_n^{\rm aux}-X_n\|_\HH^p]\bigr)^{\frac1p}\\
&\le C_p(T,\tau_0)\tau\bigl(1+\|x_0\|_\EE^5\bigr)\\
&+C_{\alpha,p}(T)\tau^{\alpha}\bigl(1+\|(-\Delta)^\alpha u_0\|_H^7+\|x_0\|_\EE^7\bigr)\\
&+C_{\alpha,p}(T)\tau^{\alpha}\bigl(1+\|(-\Delta)^\alpha u_0\|_H^7+\|x_0\|_\EE^7\bigr)\\
&\le C_{\alpha,p}(T)\tau^{\alpha}\bigl(1+\|(-\Delta)^\alpha u_0\|_H^7+\|x_0\|_\EE^7\bigr).
\end{align*}
This concludes the proof of the inequality~\eqref{eq:error} and the proof of Theorem~\ref{theo:error} is thus completed.
\end{proof}

Let us now give the proofs of the auxiliary error estimates. Note that the proof of Lemma~\ref{lem:error3} requires the error estimate~\eqref{eq:error2} from Lemma~\ref{lem:error2}.

\begin{proof}[Proof of Lemma~\ref{lem:error1}]
For all $t\ge 0$ and $\tau\in(0,\tau_0)$, set
\[
R_\tau(t)=X_\tau(t)-X(t).
\]
The auxiliary process $\bigl(R_\tau(t)\bigr)_{t\ge 0}$ is the unique solution of the evolution equation
\begin{align*}
\frac{\dd R_\tau(t)}{\dd t}&=-\IL R_\tau(t)+\psi_\tau(X_\tau(t))-\psi_\tau(X(t))+\psi_\tau(X(t))-F(X(t))
\end{align*}
with the initial value $R_\tau(0)=0$. Therefore one obtains, almost surely, for all $t\ge 0$
\begin{align*}
\frac12\frac{\dd \|R_\tau(t)\|_{\HH}^2}{\dd t}&=\langle R_\tau(t),-\Lambda R_\tau(t)\rangle_{\HH}+\langle R_\tau(t),\psi_{\tau}(X_\tau(t))-\psi_{\tau}(X(t))\rangle_{\HH}\\
&\quad +\langle R_\tau(t), \psi_\tau(X(t))-F(X(t))\rangle_{\HH}.
\end{align*}
First, one has
\[
\langle R_\tau(t),-\Lambda R_\tau(t)\rangle_{\HH}\le 0.
\]
Second, using the one-sided Lipschitz continuity property~\eqref{eq:psitau-onesidedLip} from Proposition~\ref{propo:psitau} for $\psi_\tau$ (uniformly with respect to $\tau\in(0,\tau_0)$), one has
\[
\langle R_\tau(t),\psi_{\tau}(X_\tau(t))-\psi_{\tau}(X(t))\rangle_{\HH}\le C(\tau_0)\|R_\tau(t)\|_{\HH}^2.
\]
Finally, using the Cauchy--Schwarz and Young inequalities and the error estimate~\eqref{eq:psitau-cv} from Proposition~\ref{propo:psitau} , one has
\begin{align*}
\langle R_\tau(t), \psi_\tau(X(t))-F(X(t))\rangle_{\HH}&\le \|R_\tau(t)\|_\HH \|\psi_\tau(X(t))-F(X(t))\|_\HH\\
&\le \frac12\|R_\tau(t)\|_\HH^2+\frac12\|\psi_\tau(X(t))-F(X(t))\|_\HH^2\\
&\le \frac12\|R_\tau(t)\|_\HH^2+C(\tau_0)\tau^2\bigl(1+\|X(t)\|_\EE^{10}\bigr).
\end{align*}
Gathering the upper bounds above and using Gronwall's lemma, one obtains, almost surely, for all $t\in[0,T]$
\[
\|R_\tau(t)\|_\HH^2\le C(T,\tau_0)\tau^2\int_{0}^{T}\bigl(1+\|X(s)\|_\EE^{10}\bigr)\,\dd s.
\]
Using the moment bound~\eqref{eq:momentSEE} from Proposition~\ref{propo:SEE}, one then obtains for all $t\in[0,T]$ and all $p\in[2,\infty)$
\begin{align*}
\bigl(\E[\|R_\tau(t)\|_\HH^p]\bigr)^{\frac{2}{p}}&\le C(T,\tau_0)\tau^2\int_{0}^{T}\bigl(1+\E[\|X(s)\|_\EE^{5p}]^{\frac{2}{p}}\bigr)\,\dd s\\
&\le C(T,\tau_0)\tau^2\bigl(1+\underset{s\in[0,T]}\sup~\E[\|X(s)\|_\EE^{5p}]^{\frac{2}{p}}\bigr)\\
&\le C_p(T,\tau_0)\tau^2\bigl(1+\|x_0\|_\EE^{10}\bigr).
\end{align*}
This estimate has been proved for $p\in[2,\infty)$, however it is also valid for $p\in[1,2)$.
This concludes the proof of the error estimate~\eqref{eq:error1} and of Lemma~\ref{lem:error1}.
\end{proof}

In order to prove Lemma~\ref{lem:error2}, let us recall the following useful standard inequality:
\begin{equation}\label{eq:ineq}
\underset{n\in\N,z\in[0,\infty)}\sup~n|\frac{1}{(1+z)^n}-e^{-nz}|+\underset{n\in\N,z\in[0,\infty)}\sup~\frac{|\frac{1}{(1+z)^n}-e^{-nz}|}{\min(1,z)}<\infty.
\end{equation}
In addition, for all $\alpha\in[0,1]$, $n\in\N$ and $z\in[0,\infty)$, one has $\min(1,z)\le z^\alpha$.
See Section~\ref{appendixx} in the appendix for a proof.

\begin{proof}[Proof of Lemma~\ref{lem:error2}]
Using the mild formulations~\eqref{eq:SEEauxmild} for $X_\tau(t_n)$ and~\eqref{eq:mildnumaux} for $X_n^{\rm aux}$, one obtains the following decomposition of the error:
for all $n\ge 0$, one has
\begin{equation}\label{eq:decomperror2}
X_\tau(t_n)-X_n^{\rm aux}=E_{n}^{\tau,1}+E_{n}^{\tau,2}+E_{n}^{\tau,3}+E_{n}^{\tau,4}+E_{n}^{\tau,5},
\end{equation}
where
\begin{align}
E_{n}^{\tau,1}&=(e^{-n\tau \IL}-\IA_\tau^{n})x_0\\
E_{n}^{\tau,2}&=\mathcal{Z}(t_n)-\mathcal{Z}_n\\
E_{n}^{\tau,3}&=\sum_{k=0}^{n-1}\int_{t_k}^{t_{k+1}}e^{-(t_n-s)\IL}\bigl(\psi_\tau(X_\tau(s))-\psi_\tau(X_{\tau}(t_{k}))\bigr)\,\dd s\\
E_{n}^{\tau,4}&=\sum_{k=0}^{n-1}\int_{t_k}^{t_{k+1}}\bigl(e^{-(t_n-s)\IL}-e^{-(t_n-t_{k})\IL}\bigr)\psi_\tau(X_{\tau}(t_{k}))\,\dd s\\
E_{n}^{\tau,5}&=\tau\sum_{k=0}^{n-1}\bigl(e^{-(t_n-t_{k})\IL}-\IA_\tau^{n-k}\bigr)\psi_\tau(X_\tau(t_{k})).
\end{align}
Let us now give estimates for those five error terms.

$\bullet$ If the splitting schemes~\eqref{eq:LTexact} and~\eqref{eq:LTexpo} are considered, one has $\IA_\tau=e^{-\tau\IL}$ and thus $E_n^{\tau,1}=0$ for all $n\ge 0$.
If the splitting scheme~\eqref{eq:LTimp} is considered, one has $\IA_\tau=(I+\tau\IL)^{-1}$, thus using the inequality~\eqref{eq:ineq}, for all $n\in\{0,\ldots,N\}$, one has
\begin{align*}
\|E_n^{\tau,1}\|_\HH^2&=\|\bigl(e^{n\tau\Delta}-((I-\tau\Delta)^{-1})^n\bigr)u_0\|_{H}^2\\
&=\sum_{j=1}^{\infty}(\frac{1}{(1+\tau\lambda_j)^n}-e^{-n\tau\lambda_j}\bigr)^2\langle u_0,e_j\rangle_{H}^2\\
&\le C_\alpha\sum_{j=1}^{\infty}(\tau\lambda_j)^{2\alpha}\langle u_0,e_j\rangle_{H}^2\\
&\le C_\alpha \tau^{2\alpha}\|(-\Delta)^\alpha u_0\|_H^2.
\end{align*}
Therefore one obtains the following upper bound: for all $\alpha\in[0,\frac14)$, there exists $C_\alpha\in(0,\infty)$ such that for all $\tau\in(0,\tau_0)$ one has
\begin{equation}\label{eq:En1}
\underset{0\le n\le N}\sup~\bigl(\E[\|E_n^{\tau,1}\|_\HH^p]\bigr)^{\frac1p}\le C_{\alpha}\tau^\alpha\|(-\Delta)^\alpha u_0\|_H
\end{equation}
$\bullet$ Note that if the splitting scheme~\eqref{eq:LTexact} is considered ($X_n=X_n^{\LTexact}$ for all $n\ge 0$), one has $E_n^{\tau,2}=0$ for all $n\ge 0$. If the splitting schemes~\eqref{eq:LTexpo} and~\eqref{eq:LTimp} are considered, for all $n\ge 0$ one has
\[
E_n^{\tau,2}=\mathcal{Z}(t_n)-\mathcal{Z}_n=\begin{pmatrix} Z(t_n)-Z_n\\ 0\end{pmatrix},
\]
with $Z_n=Z_n^{\LTexpo}$ (resp. $Z_n=Z_n^{\LTimp}$) if the scheme~\eqref{eq:LTexpo} (resp. the scheme~\eqref{eq:LTimp}) is considered. Here, we denote
\begin{align*}
Z_{n+1}^{\LTexpo}&=e^{\tau\Delta}\Bigl(Z_{n}^{\LTexpo}+\delta W_n\Bigr)\\
Z_{n+1}^{\LTimp}&=(I-\tau\Delta)^{-1}\Bigl(Z_{n}^{\LTimp}+\delta W_n\Bigr).
\end{align*}
One has the following mean-square error estimate, which are standard results in the analysis of numerical schemes
for parabolic semilinear stochastic partial differential equations, see for instance \cite[Theorem~3.2]{MR1873517}:
for all $\alpha\in[0,\frac14)$, there exists $C_\alpha\in(0,\infty)$ such that
\[
\underset{n\ge 0}\sup~\E[\|Z(t_n)-Z_n\|_H^2]\le C_\alpha \tau^{2\alpha},
\]
if $Z_n=Z_n^{\LTexpo}$ and $Z_n=Z_n^{\LTimp}$. Since $Z(t_n)-Z_n$ is a $H$-valued Gaussian random variable, one obtains the following upper bound: for all $\alpha\in[0,\frac14)$ and $p\in[1,\infty)$, there exists $C_{\alpha,p}\in(0,\infty)$ such that for all $\tau\in(0,\tau_0)$ one has
\begin{equation}\label{eq:En2}
\underset{0\le n\le N}\sup~\bigl(\E[\|E_n^{\tau,2}\|_\HH^p]\bigr)^{\frac1p}\le C_{\alpha,p}\tau^\alpha.
\end{equation}
$\bullet$ Using the inequality~\eqref{eq:propSemiGrp-normes} and the local Lipschitz continuity property~\eqref{eq:psitau-localLip} of $\psi_\tau$ (Proposition~\ref{propo:psitau}),
one obtains
\begin{align*}
\|E_n^{\tau,3}\|_\HH&\le \sum_{k=0}^{n-1}\int_{t_k}^{t_{k+1}}\|e^{-(t_n-s)\IL}\bigl(\psi_\tau(X_\tau(s))-\psi_\tau(X_{\tau}(t_{k}))\bigr)\|_{\HH}\,\dd s\\
&\le \sum_{k=0}^{n-1}\int_{t_k}^{t_{k+1}}\|\bigl(\psi_\tau(X_\tau(s))-\psi_\tau(X_{\tau}(t_{k}))\bigr)\|_{\HH}\,\dd s\\
&\le C(\tau_0)\sum_{k=0}^{n-1}\int_{t_k}^{t_{k+1}}\bigl(1+\|X_\tau(s)\|_\EE^3+\|X_\tau(t_k)\|_\EE^3\bigr)\|X_\tau(s)-X_{\tau}(t_{k})\|_{\HH}\,\dd s.
\end{align*}
Using the Minkowskii and Cauchy--Schwarz inequalities, the moment bound~\eqref{eq:momentSEEaux} (Proposition~\ref{propo:SEEaux}) and the regularity estimate~\eqref{eq:regularityXtau} (Lemma~\ref{lem:regularityXtau}), one has
\begin{align*}
\bigl(\E[\|E_n^{\tau,3}\|_\HH^p]\bigr)^{\frac1p}&\le C(\tau_0)\sum_{k=0}^{n-1}\int_{t_k}^{t_{k+1}}\bigl(1+\underset{r\in[t_{k},t_{k+1}]}\sup~\bigl(\E[\|X_\tau(r)\|_\EE^{6p}]\bigr)^{\frac{1}{2p}}\bigr)\bigl(\E[\|X_\tau(s)-X_{\tau}(t_{k})\|_{\HH}^{2p}]\bigr)^{\frac{1}{2p}}\,\dd s\\
&\le C_{\alpha,p}(T)\tau^\alpha (1+\|x_0\|_\EE^3)\bigl(1+\|(-\Delta)^\alpha u_0\|_H^4+\|x_0\|_\EE^4\bigr).
\end{align*}
Therefore one obtains the following upper bound:
for all $\alpha\in[0,\frac14)$, $p\in[1,\infty)$ and $T\in(0,\infty)$, there exists $C_{\alpha,p}(T)\in(0,\infty)$ such that for all $\tau\in(0,\tau_0)$ one has
\begin{equation}\label{eq:En3}
\underset{0\le n\le N}\sup~\bigl(\E[\|E_n^{\tau,3}\|_\HH^p]\bigr)^{\frac1p}\le C_{\alpha,p}(T)\tau^\alpha\bigl(1+\|(-\Delta)^\alpha u_0\|_H^7+\|x_0\|_\EE^7\bigr).
\end{equation}
$\bullet$ Using the inequality~\eqref{eq:propSemiGrp-regul} from Proposition~\ref{propSemiGrp} (with $\mu=\alpha\in[0,1)$ and $\nu=0$) and the local Lipschitz continuity property~\eqref{eq:psitau-localLip} of $\psi_\tau$ combined with the bound~\eqref{eq:psitau0} (Proposition~\ref{propo:psitau}), one has for all $s\in[t_{k},t_{k+1}]$
\begin{align*}
\|\bigl(e^{-(t_n-s)\IL}-e^{-(t_n-t_{k})\IL}\bigr)\psi_\tau(X_{\tau}(t_{k}))\|_\HH&\le C_{\alpha}\frac{|s-t_k|^\alpha}{(t_n-s)^\alpha}\|\psi_\tau(X_{\tau}(t_{k}))\|_\HH\\
&\le C_{\alpha}\frac{\tau^\alpha}{(t_n-s)^\alpha}\bigl(1+\|X_\tau(t_k)\|_\EE^4\bigr).
\end{align*}
Using the Minkoswskii inequality, the moment bounds~\eqref{eq:momentSEEaux} from Proposition~\ref{propo:SEEaux}, and the fact that $\int_0^T s^{-\alpha}\,\dd s<\infty$ for $\alpha\in[0,1)$, one obtains the following upper bound: for all $\alpha\in[0,\frac14)$, $p\in[1,\infty)$ and $T\in(0,\infty)$, there exists $C_{\alpha,p}(T)\in(0,\infty)$ such that for all $\tau\in(0,\tau_0)$ one has
\begin{equation}\label{eq:En4}
\underset{0\le n\le N}\sup~\bigl(\E[\|E_n^{\tau,4}\|_\HH^p]\bigr)^{\frac1p}\le C_{\alpha,p}(T)\tau^\alpha\bigl(1+\|x_0\|_\EE^4\bigr).
\end{equation}
$\bullet$ Note that if the splitting schemes~\eqref{eq:LTexact} and~\eqref{eq:LTexpo} are considered, one has $\IA_\tau=e^{-\tau\IL}$ and thus $E_n^{\tau,5}=0$ for all $n\ge 0$.
If the splitting scheme~\eqref{eq:LTimp} is considered, one has $\IA_\tau=(I+\tau\IL)^{-1}$. Using the inequality~\eqref{eq:ineq}, for all $x=(u,v)\in\HH$ and all $0\le k\le n-1$ one has
\[
\|(e^{-(t_n-t_k)\IL}x-\IA_\tau^{n-k}x\|_\HH=\|e^{(n-k)\tau\Delta}u-((I-\tau\Delta)^{-1})^{n-k}u\|_H\le \frac{C\|u\|_H}{(n-k)}\le \frac{C\|x\|_\HH}{(n-k)^\alpha}. 
\]
As a consequence, using the Minkowskii inequality, the local Lipschitz continuity property~\eqref{eq:psitau-localLip} of $\psi_\tau$ combined with the bound~\eqref{eq:psitau0} (Proposition~\ref{propo:psitau}) and the moment bounds~\eqref{eq:momentSEEaux} from Proposition~\ref{propo:SEEaux}, one has
\begin{align*}
\bigl(\E[\|E_n^{\tau,5}\|_\HH^p]\bigr)^{\frac1p}&\le \tau\sum_{k=0}^{n-1}\frac{C}{(n-k)^\alpha}\bigl(1+\bigl(\E[\|X_\tau(t_k)\|_\EE^{4p}]\bigr)^{\frac1p}\bigr)\\
&\le C_p(T)\tau\sum_{\ell=1}^{n}\frac{1}{t_{\ell}^\alpha} \tau^\alpha\bigl(1+\|x_0\|_\EE^4\bigr).
\end{align*} 
Using the fact that for all $\alpha\in[0,1)$ one has
\[
\underset{\tau\in(0,\tau_0)}\sup~\tau\sum_{\ell=1}^{N}\frac{1}{t_{\ell}^\alpha}<\infty,
\]
one obtains the following upper bound: for all $\alpha\in[0,\frac14)$, $p\in[1,\infty)$ and $T\in(0,\infty)$, there exists $C_{\alpha,p}(T)\in(0,\infty)$ such that for all $\tau\in(0,\tau_0)$ one has
\begin{equation}\label{eq:En5}
\underset{0\le n\le N}\sup~\bigl(\E[\|E_n^{\tau,5}\|_\HH^p]\bigr)^{\frac1p}\le C_{\alpha,p}(T)\tau^\alpha\bigl(1+\|x_0\|_\EE^4\bigr).
\end{equation}
We are now in position to conclude the proof: using the decomposition of the error~\eqref{eq:decomperror2} and the upper bounds~\eqref{eq:En1},~\eqref{eq:En2},~\eqref{eq:En3},
~\eqref{eq:En4} and~\eqref{eq:En5}, one obtains the following upper bound: for all $\alpha\in[0,\frac14)$, $p\in[1,\infty)$ and $T\in(0,\infty)$, there exists $C_{\alpha,p}(T)\in(0,\infty)$ such that for all $\tau\in(0,\tau_0)$ one has
\[
\underset{0\le n\le N}\sup~\bigl(\E[\|X_\tau(t_n)-X_n^{\rm aux}\|_\HH^p]\bigr)^{\frac1p}\le C_{\alpha,p}(T)\tau^\alpha\bigl(1+\|(-\Delta)^\alpha u_0\|_H^7+\|x_0\|_\EE^7\bigr).
\]
This concludes the proof of the inequality~\eqref{eq:error2} and the proof of Lemma~\ref{lem:error2} is completed.
\end{proof}
Note that the proof of Lemma~\ref{lem:error2} above does not use Gronwall inequalities arguments.

\begin{proof}[Proof of Lemma~\ref{lem:error3}]
Using the expressions~\eqref{eq:numaux} and~\eqref{eq:LTscheme} for $X_n^{\rm aux}$ and $X_n$,
and the definition~\eqref{eq:psitau} of the mapping $\psi_\tau$, for all $n\in\{0,\ldots,N-1\}$ one obtains
\[
X_{n+1}^{\rm aux}-X_{n+1}=\IA_\tau\bigl(X_n^{\rm aux}-X_n\bigr)+\tau \IA_\tau\bigl(\psi_\tau(X_\tau(t_n))-\psi_\tau(X_n)\bigr).
\]
Writing
\[
\psi_\tau(X_\tau(t_n))=\psi_\tau(X_\tau(t_n))-\psi_\tau(X_n^{\rm aux})+\psi_\tau(X_n^{\rm aux}),
\]
and using again the identity~\eqref{eq:psitau}, one obtains
\begin{equation}\label{eq:decomperror3}
X_{n+1}^{\rm aux}-X_{n+1}=\IA_\tau\bigl(\phi_\tau(X_n^{\rm aux})-\phi_\tau(X_n)\bigr)+\tau \IA_\tau\bigl(\psi_\tau(X_\tau(t_n))-\psi_\tau(X_n^{\rm aux})\bigr).
\end{equation}
On the one hand, using the inequalities~\eqref{eq:propSemiGrp-normes} (Proposition~\ref{propSemiGrp}), if $\IA_\tau=e^{-\tau\IL}$ and~\eqref{eq:SemiGrp-normes-imp}, if $\IA_\tau=(I+\tau\IL)^{-1}$, and the global Lipschitz continuity property~\eqref{eq:phitau-Lip} of $\phi_\tau$ (Proposition~\ref{propo:phitau}), one obtains
\begin{align*}
\|\IA_\tau\bigl(\phi_\tau(X_n^{\rm aux})-\phi_\tau(X_n)\bigr)\|_\HH&\le \|\phi_\tau(X_n^{\rm aux})-\phi_\tau(X_n)\|_\HH\\
&\le e^{\tau(1+\vvvert B\vvvert)}\|X_n^{\rm aux}-X_n\|_\HH.
\end{align*}
On the other hand, using the inequalities~\eqref{eq:propSemiGrp-normes} (Proposition~\ref{propSemiGrp}), if $\IA_\tau=e^{-\tau\IL}$ and~\eqref{eq:SemiGrp-normes-imp}, if $\IA_\tau=(I+\tau\IL)^{-1}$, and the local Lipschitz continuity property~\eqref{eq:psitau-localLip} of $\psi_\tau$ (Proposition~\ref{propo:psitau}), one obtains
\begin{align*}
\|\IA_\tau\bigl(\psi_\tau(X_\tau(t_n))-\psi_\tau(X_n^{\rm aux})\bigr)\|_\HH&\le \|\psi_\tau(X_\tau(t_n))-\psi_\tau(X_n^{\rm aux})\|_\HH\\
&\le C(\tau_0)\Bigl(1+\|X_\tau(t_n)\|_\EE^3+\|X_n^{\rm aux}\|_\EE^3\Bigr)\|X_\tau(t_n)-X_n^{\rm aux}\|_\HH.
\end{align*}
By a straightforward argument, since $X_0^{\rm aux}=X_0=x_0$, for all $n\in\{0,\ldots,N\}$, one has
\[
\|X_n^{\rm aux}-X_n\|_\HH\le C(\tau_0)e^{T(1+\vvvert B\vvvert)}\tau\sum_{k=1}^{N}\Bigl(1+\|X_\tau(t_k)\|_\EE^3+\|X_k^{\rm aux}\|_\EE^3\Bigr)\|X_\tau(t_k)-X_k^{\rm aux}\|_\HH.
\]
Using the Minkowskii and Cauchy--Schwarz inequalities, the moment bounds~\eqref{eq:momentSEEaux} and~\eqref{eq:momentboundsnumaux} from Proposition~\ref{propo:SEEaux} and Lemma~\ref{lem:momentboundsnumaux} respectively, and the error estimate~\eqref{eq:error2} from Lemma~\ref{lem:error2}, one obtains the following strong error estimate: for all $\alpha\in[0,\frac14)$, $p\in[1,\infty)$ and $T\in(0,\infty)$, there exists $C_{\alpha,p}(T)\in(0,\infty)$ such that for all $\tau\in(0,\tau_0)$ one has
\begin{align*}
\underset{0\le n\le N}\sup~\bigl(\E[\|X_n^{\rm aux}-X_n\|_\HH^p]\bigr)^{\frac1p}&\le C(T)\tau\sum_{k=1}^{N}\Bigl(1+\bigl(\E[\|X_\tau(t_k)\|_\EE^{6p}])^{\frac{1}{2p}}+\bigl(\|X_k^{\rm aux}\|_\EE^{6p}]\bigr)^{\frac{1}{2p}}\Bigr)\\
&\quad\times \bigl(\E[\|X_\tau(t_k)-X_k^{\rm aux}\|_\HH^{2p}\bigr)^{\frac{1}{2p}}\\
&\le C_{\alpha,p}(T)\tau^{\alpha}\bigl(1+\|x_0\|_\EE^3\bigr)\bigl(1+\|(-\Delta)^\alpha u_0\|_H^4+\|x_0\|_\EE^4\bigr).
\end{align*}
This concludes the proof of the inequality~\eqref{eq:error3} and the proof of Lemma~\ref{lem:error3} is thus completed.
\end{proof}

\section{Numerical experiments}\label{sect-num}
This section presents numerical experiments to support and illustrate
the above theoretical results. To perform these numerical experiments,
we consider the stochastic FitzHugh--Nagumo SPDE system~\eqref{eq:FhNsystem} with Neumann boundary conditions on the interval $[0,1]$.
The spatial discretization is performed using a standard finite difference method with mesh size denoted by $h$.
In order to obtain a linear system with a symmetric matrix, we use centered differences
for the numerical discretization of the Laplacian, while first order
differences are used for the discretization of the Neumann boundary conditions.
The initial values are given by $u_0(\zeta)=\cos(2\pi \zeta)$ and $v_0(\zeta)=\cos(2\pi \zeta)$.
For the temporal discretization, we use the three Lie--Trotter splitting integrators~\eqref{eq:LTexact},~\eqref{eq:LTexpo} and~\eqref{eq:LTimp} studied in this paper,
denoted below by \textsc{LTexact, LTexpo, LTimp} respectively.

\subsection{Evolution plots}
Let us first display one sample of the numerical solutions of the stochastic FitzHugh--Nagumo system~\eqref{eq:FhNsystem}
with the parameters $\gamma_1=0.08$, $\gamma_2=0.8\gamma_1$ and $\beta=0.7$. The SPDE is discretized with
finite differences with mesh $h=2^{-10}$. We consider the time interval $[0,T]=[0,1]$ and apply the integrators
with time step size $\tau=2^{-15}$. The results are presented in Figure~\ref{fig:evol}. The general behaviour of the numerical solutions given by the three splitting schemes is the same.
However, one can observe a spatial smoothing effect in the $u$ component of the solution when the schemes \textsc{LTexpo}--\eqref{eq:LTexact} or to some extent \textsc{LTimp}--\eqref{eq:LTimp} are applied: for a given time step size, the spatial regularity of the numerical solution is increased compared with the one of the exact solution. On the contrary, the scheme \textsc{LTexact}--\eqref{eq:LTexact} preserves the spatial regularity of the solution for any value of the time step size. We refer to the recent preprint~\cite{CEB-modified} for the analysis of this phenomenon for parabolic semilinear SPDEs. Let us emphasize that the phenomenon is due to the way the stochastic convolution is computed, exactly for the scheme \textsc{LTexact}--\eqref{eq:LTexact} or approximately for the schemes \textsc{LTexpo}--\eqref{eq:LTexact} and \textsc{LTimp}--\eqref{eq:LTimp}.

\begin{figure}
\centering
 \begin{subfigure}[b]{0.35\textwidth}
   \includegraphics[width=\textwidth]{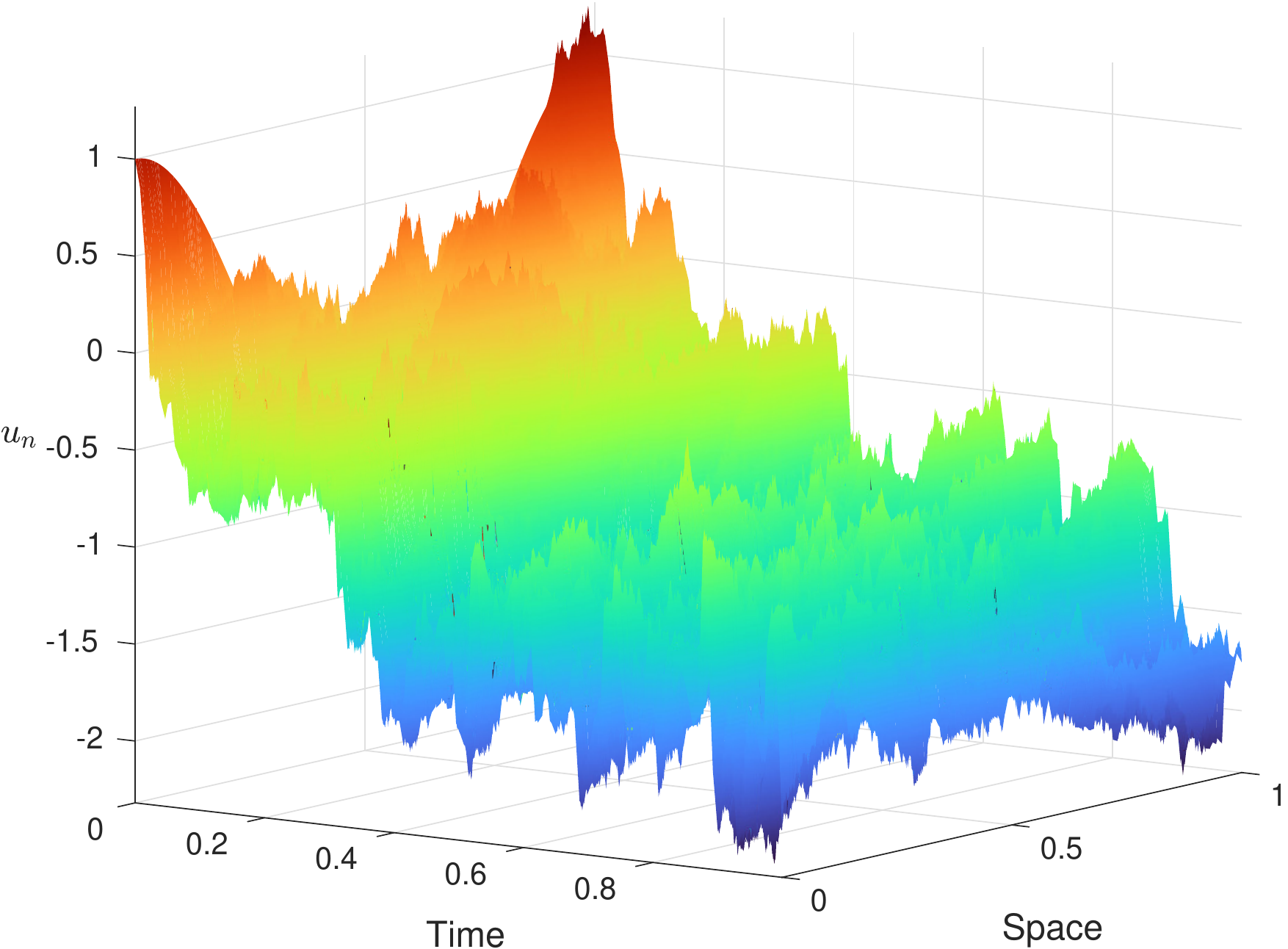}
    \caption{Component $u$ for \textsc{LTexact}}
 \end{subfigure}
 \begin{subfigure}[b]{0.35\textwidth}
   \includegraphics[width=\textwidth]{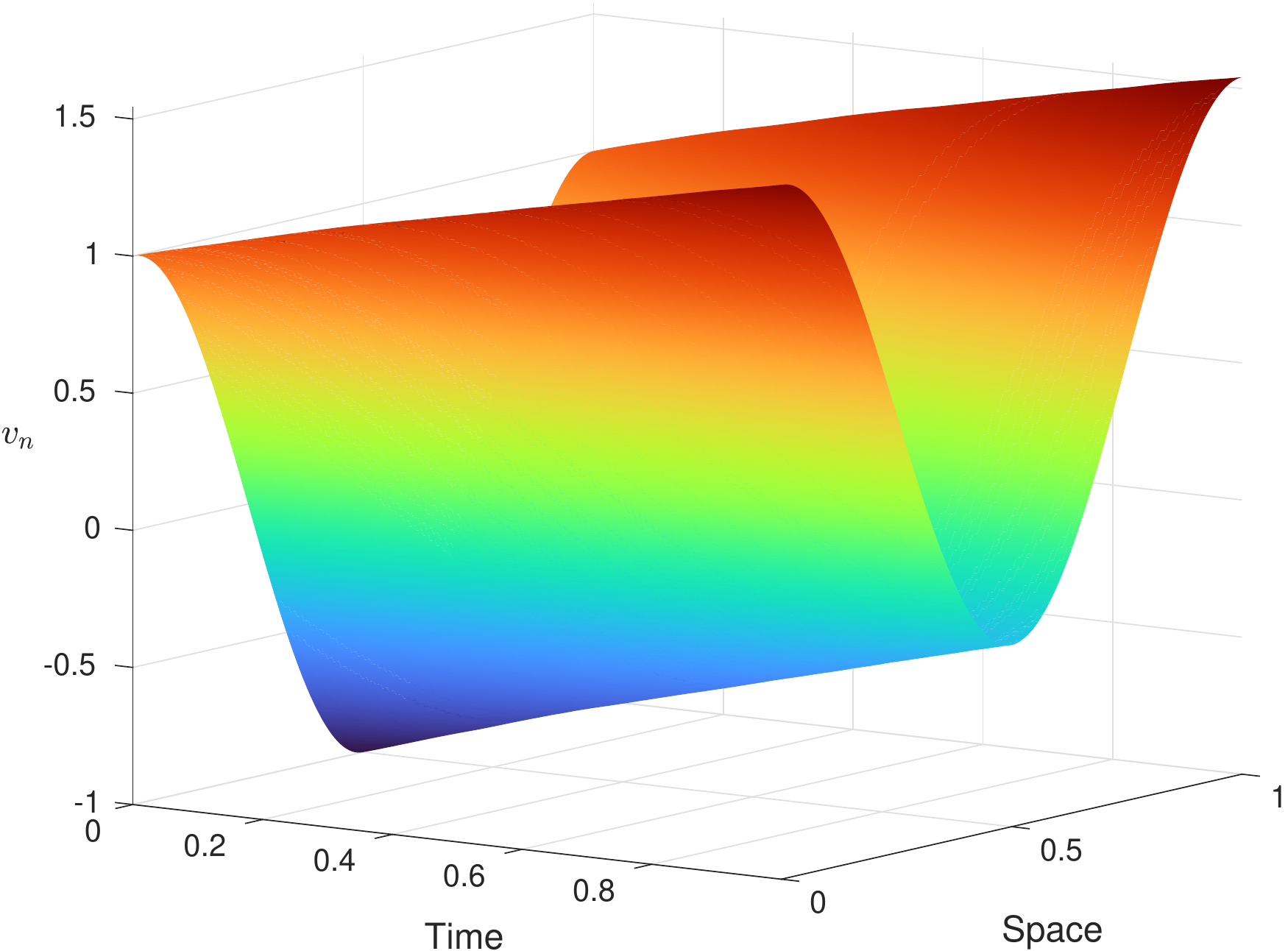}
   \caption{Component $v$ for \textsc{LTexact}}
 \end{subfigure}
     ~

 \begin{subfigure}[b]{0.35\textwidth}
   \includegraphics[width=\textwidth]{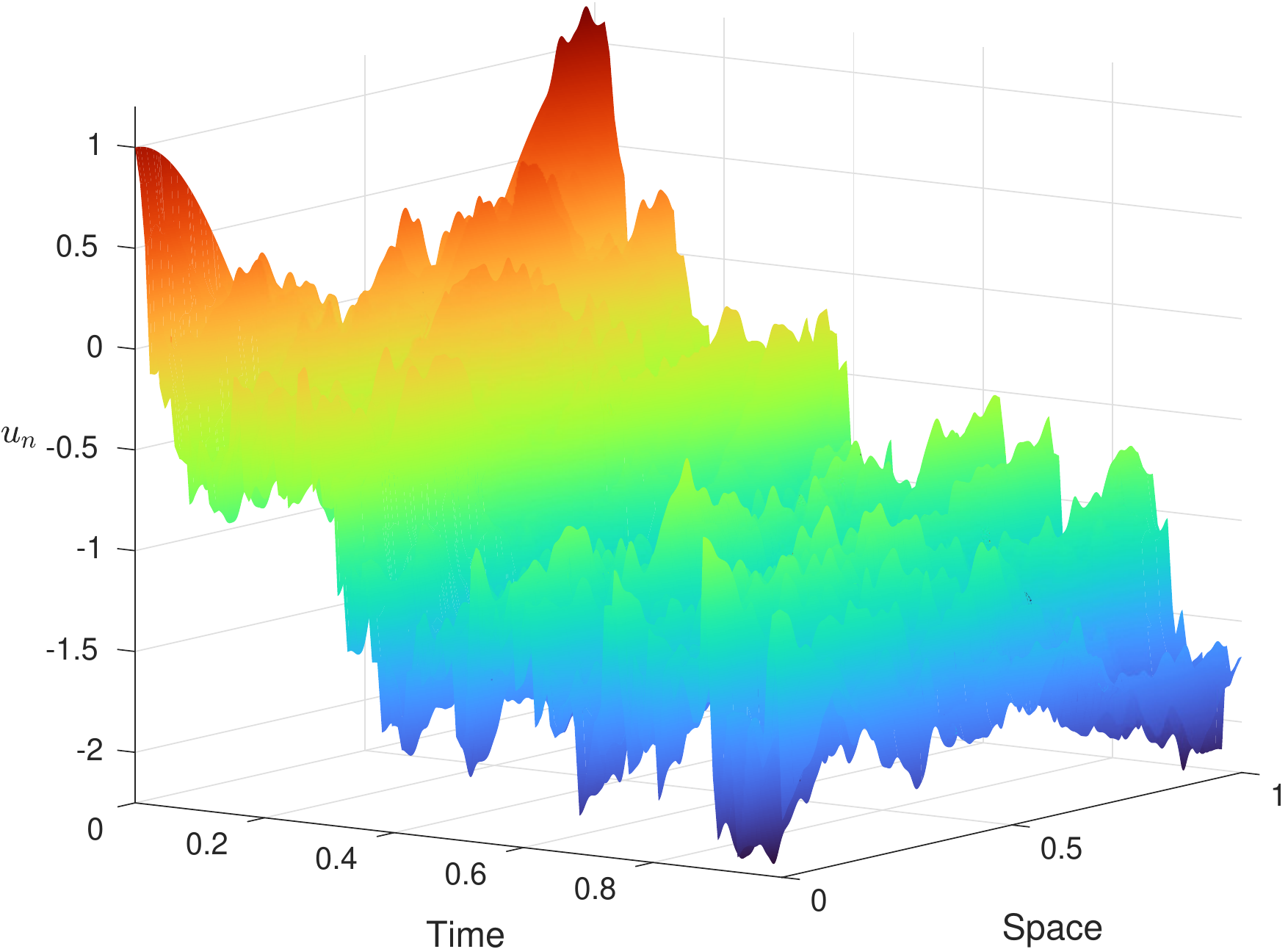}
   \caption{Component $u$ for \textsc{LTexpo}}
 \end{subfigure}
 \begin{subfigure}[b]{0.35\textwidth}
   \includegraphics[width=\textwidth]{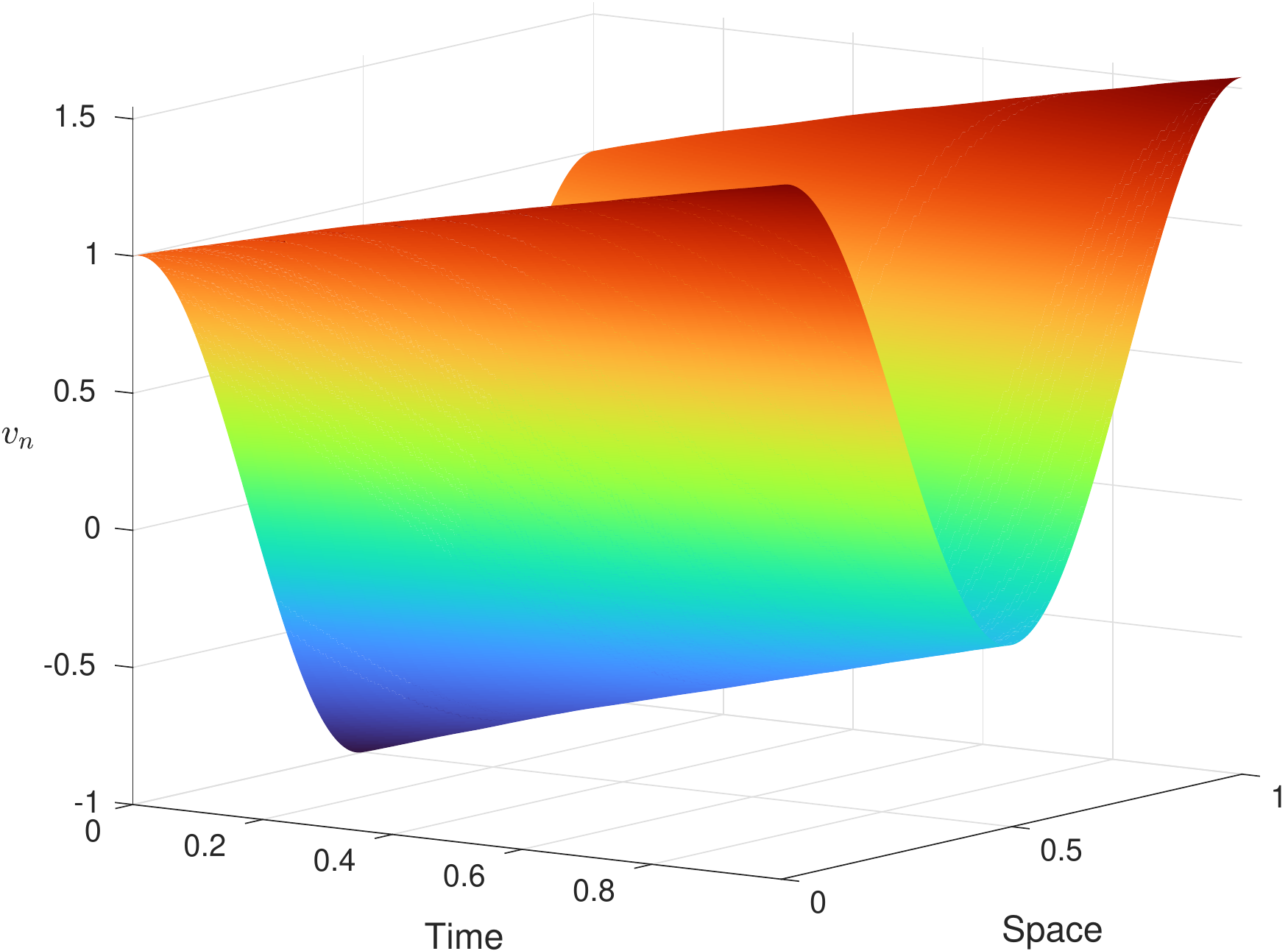}
   \caption{Component $v$ for \textsc{LTexpo}}
 \end{subfigure}
       ~

 \begin{subfigure}[b]{0.35\textwidth}
   \includegraphics[width=\textwidth]{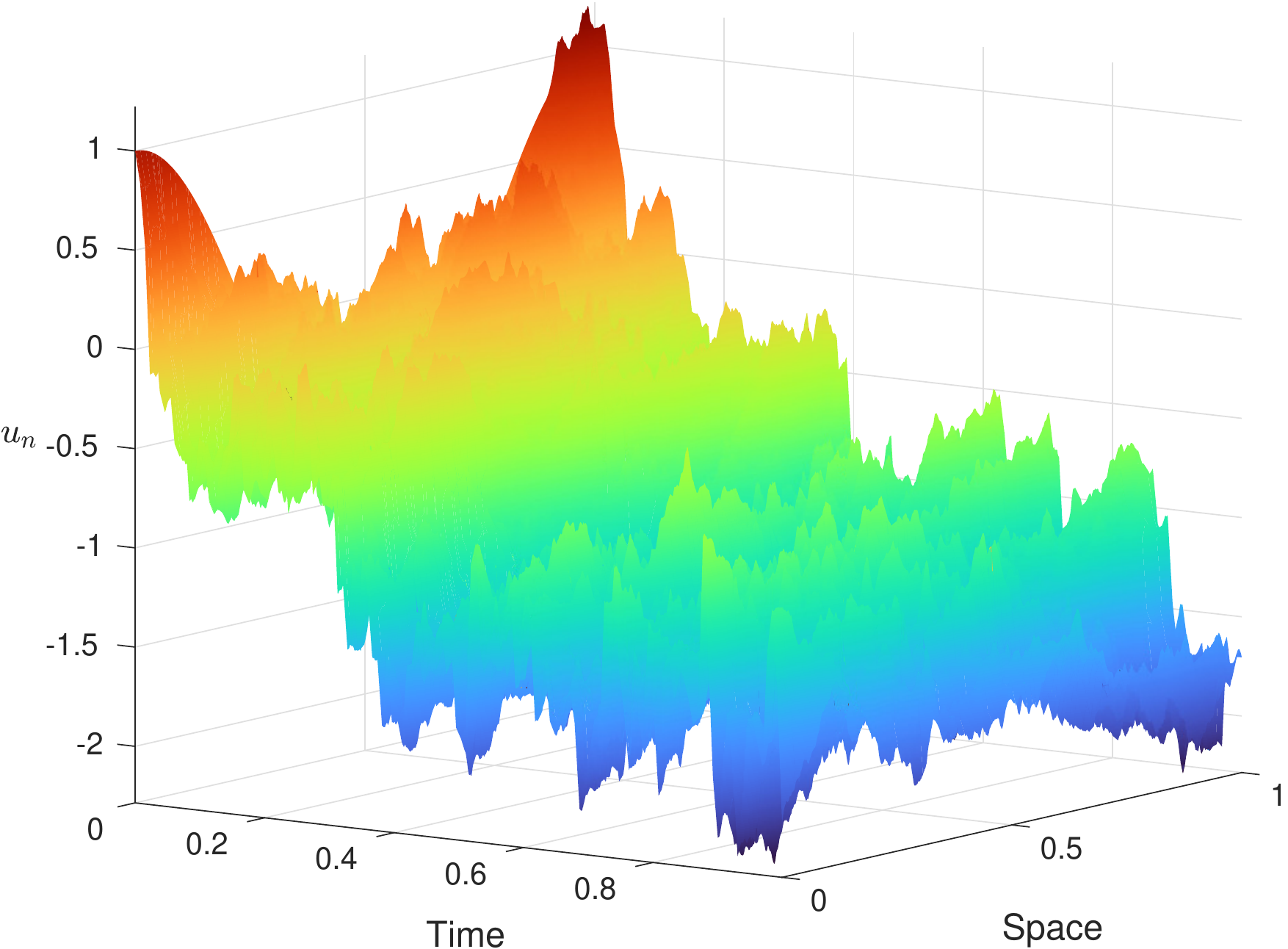}
   \caption{Component $u$ for \textsc{LTimp}}
 \end{subfigure}
 \begin{subfigure}[b]{0.35\textwidth}
   \includegraphics[width=\textwidth]{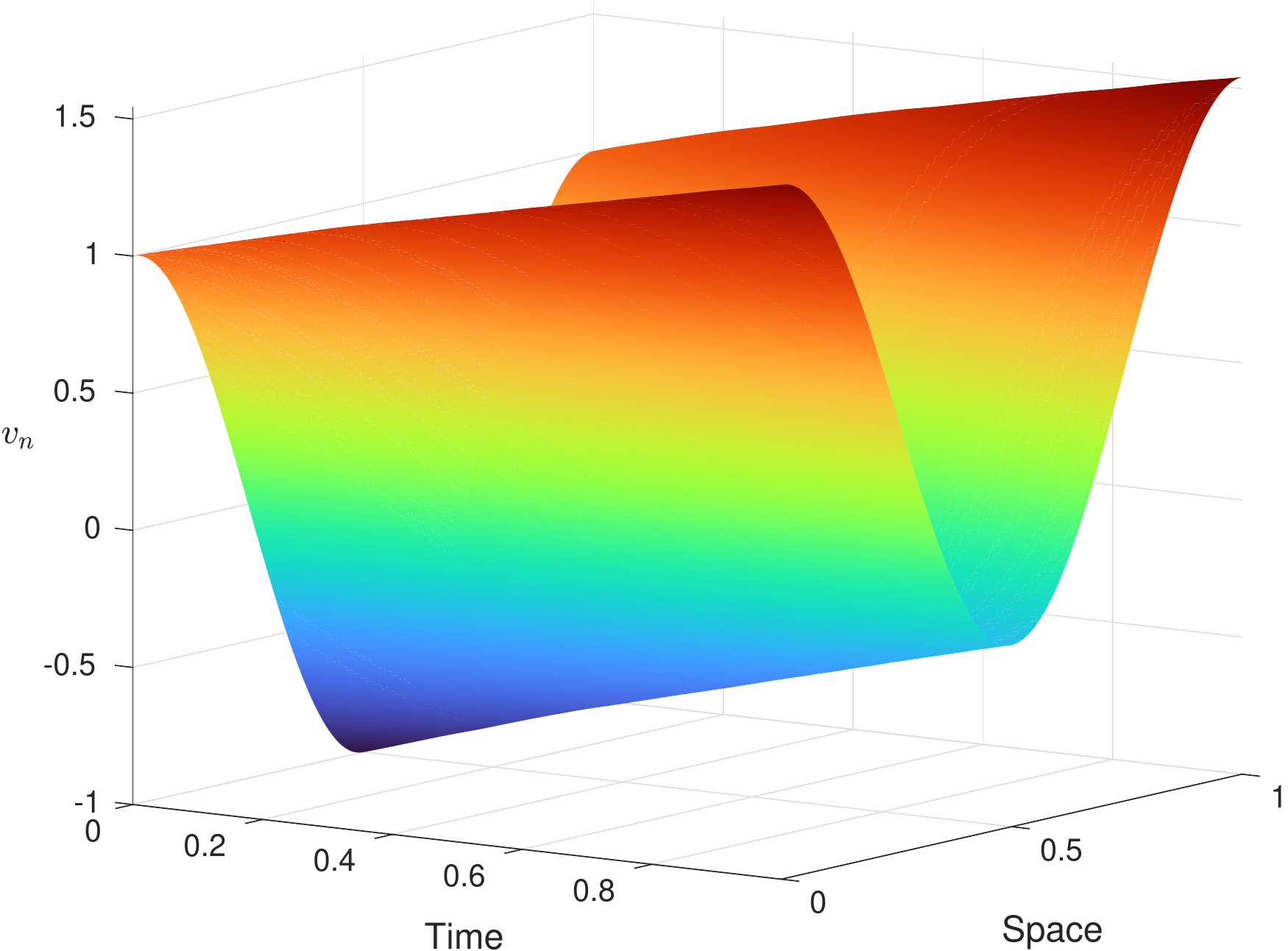}
   \caption{Component $v$ for \textsc{LTimp}}
 \end{subfigure}

\caption{Space-time evolution plots of $u$ and $v$ using the Lie--Trotter splitting schemes \textsc{LTexact}, \textsc{LTexpo}, and \textsc{LTimp}.}
\label{fig:evol}
\end{figure}

\subsection{Mean-square error plots}
To illustrate the rates of strong convergence for the Lie--Trotter splitting schemes stated in Theorem~\ref{theo:error},
we consider the stochastic FitzHugh--Nagumo system~\eqref{eq:FhNsystem} with the parameters $\gamma_1=\gamma_2=\beta=1$, with $T=1$ and apply a finite difference method with $h=2^{-9}$ for spatial discretization. We apply the Lie--Trotter splitting schemes with time steps
ranging from $2^{-10}$ to $2^{-18}$. The reference solution is computed using the scheme~\textsc{LTexact}--\eqref{eq:LTexact} with time step size $\tau_{\rm ref}=2^{-18}$. The expectation is approximated
using $M_s=100$ samples. We have checked that the Monte Carlo error is negligible.
A plot in logarithmic scales for the mean-square errors
\[
\bigl(\E[\|X(t_N)-X_N\|_\HH^2]\bigr)^{\frac12}
\]
is given on the left-hand side of Figure~\ref{fig:params}. We observe that the strong rate of convergence for the three considered Lie--Trotter splitting schemes is at least $1/4$, which illustrates the result stated in Theorem~\ref{theo:error}. Furthermore, the numerical experiments suggest that for the scheme~\textsc{LTexact}--\eqref{eq:LTexact} the order of convergence is $1/2$, which is not covered by Theorem~\ref{theo:error}. The fact that using an accelerated exponential Euler scheme where the stochastic convolution is computed exactly yields higher order of convergence is known for parabolic semilinear stochastic PDEs driven by space-time white noise, under appropriate conditions, see for instance~\cite{jk09} or~\cite[Proposition~7.3]{CEB-modified}. However, the stochastic FitzHugh--Nagumo equations considered in this article are not parabolic systems therefore it is not known how to prove the observed higher order strong rate of convergence. This question may be studied in future works.

The right-hand side of Figure~\ref{fig:params} shows the errors for the variant~\eqref{eq:LTschemehat} of the splitting scheme~\eqref{eq:LTscheme} introduced in Remark~\ref{rem:schemehat}: the mapping $\phi_\tau=\phi_\tau^{\rm L}\circ\phi_\tau^{\rm NL}$ given by~\eqref{eq:phitau} is replaced by $\hat{\phi}_\tau=\phi_\tau^{\rm NL}\circ\phi_\tau^{\rm L}$ given by~\eqref{eq:phitauhat}. As explained in Remark~\ref{rem:schemehat}, this type of Lie--Trotter schemes is not covered by the results in Section~\ref{sec-main}, more precisely the moment bounds in Theorem~\ref{theo:momentbounds} cannot be proved by the techniques used in this article. However, the numerical experiments are similar to those on the left-hand side of Figure~\ref{fig:params} and suggest that the strong order of convergence for this variant is at least $1/4$, and that higher order convergence with rate $1/2$ may be obtained for the variant of the scheme~\textsc{LTexact}--\eqref{eq:LTexact}.

\begin{figure}
\centering
\includegraphics[height=5cm,keepaspectratio]{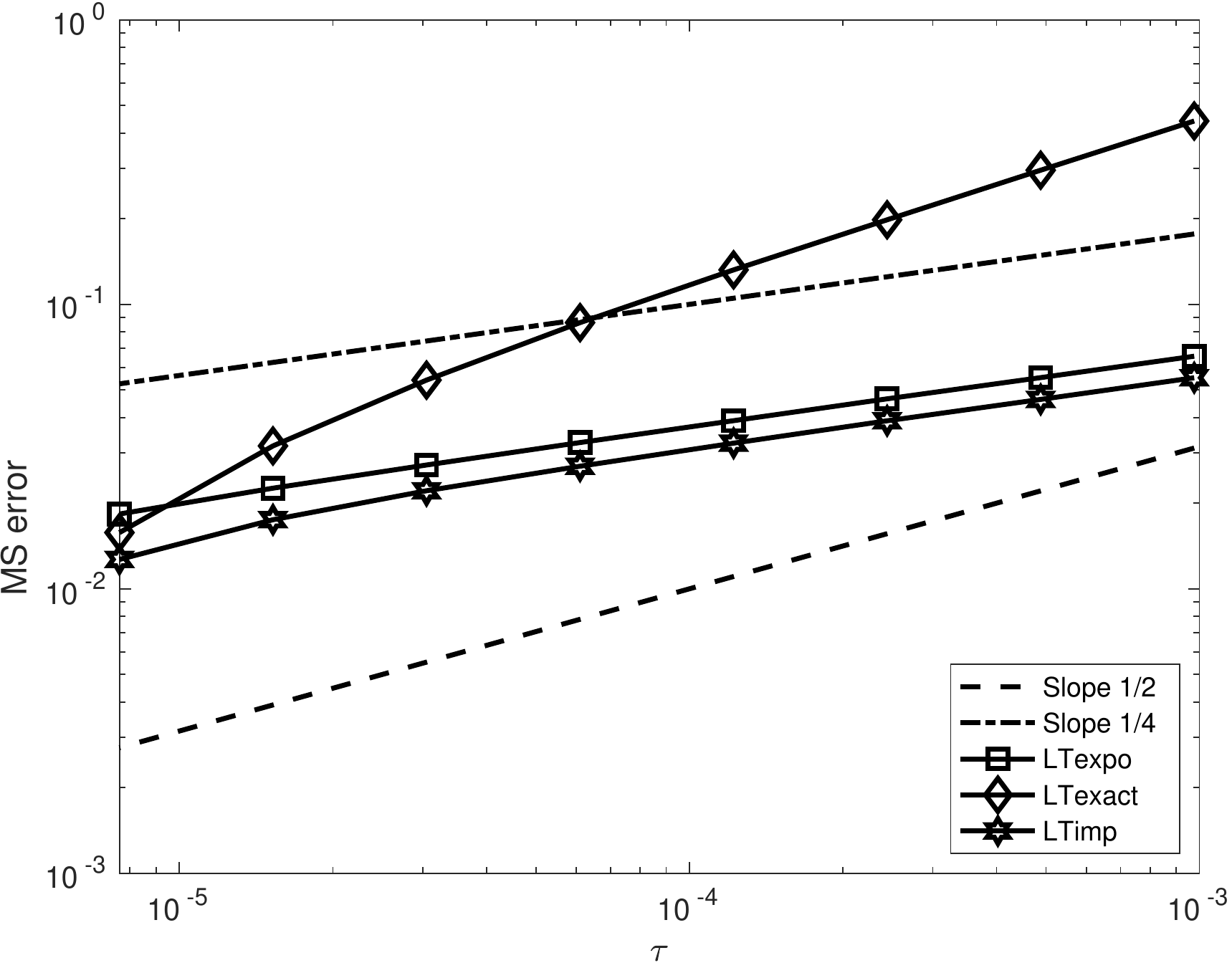}
\includegraphics[height=5cm,keepaspectratio]{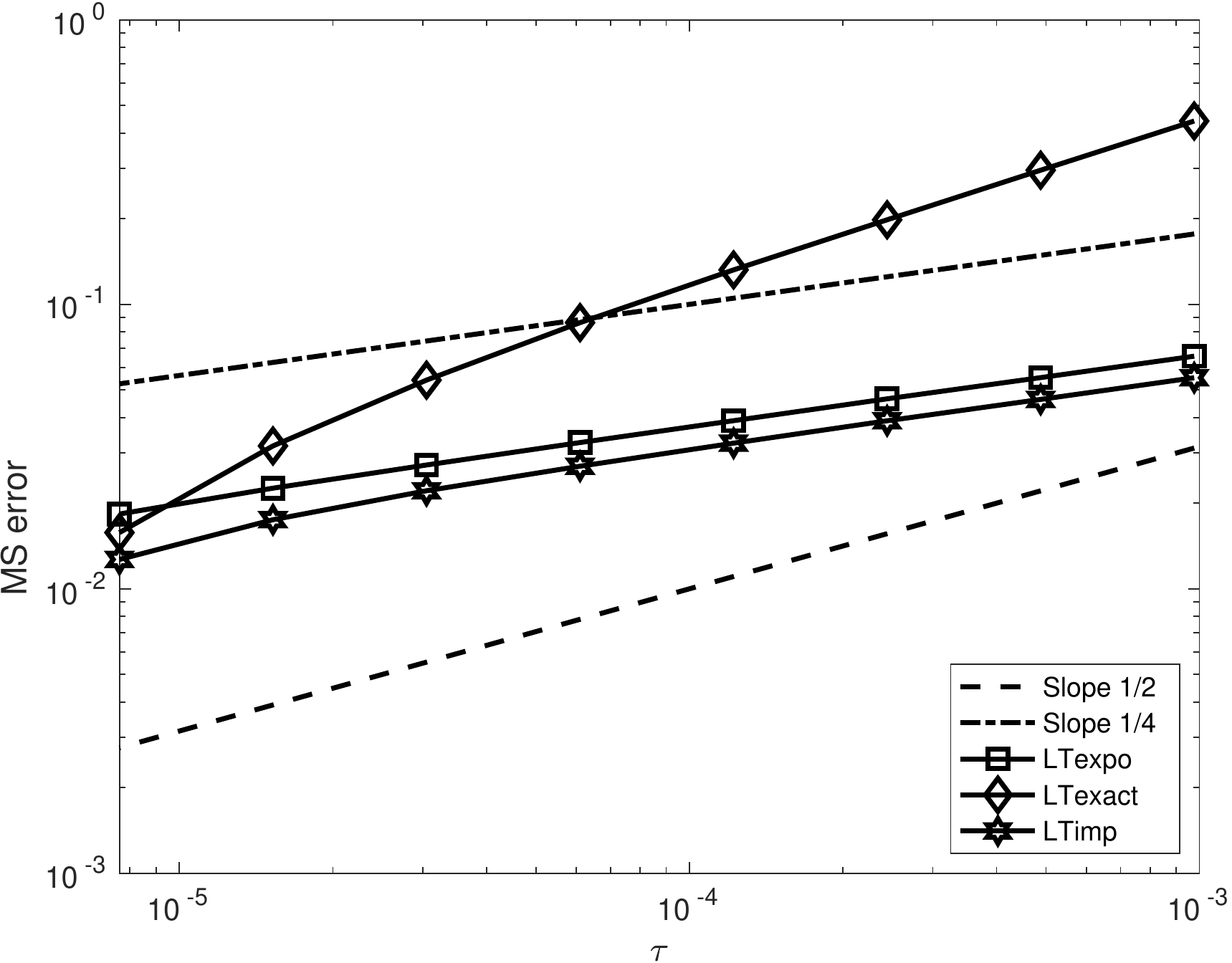}
\caption{Mean-square errors as a function of the time step:
Lie--Trotter splitting schemes: left ($\phi_\tau=\phi_\tau^{\rm L}\circ\phi_\tau^{\rm NL}$)
and right ($\phi_\tau=\phi_\tau^{\rm NL}\circ\phi_\tau^{\rm L}$) ($\diamond$ for \textsc{LTexact},
$\square$ for \textsc{LTexpo}, $\smallstar$ for \textsc{LTimp}).
The dotted lines have slopes $1/2$ and $1/4$.}
\label{fig:params}
\end{figure}

\section*{Acknowledgements}
The work of CEB is partially supported by the following project SIMALIN (ANR-19-CE40-0016) operated by the French National Research Agency.
The work of DC is partially supported by the Swedish Research Council (VR) (projects nr. $2018-04443$).
The computations were performed on resources provided by the Swedish National Infrastructure
for Computing (SNIC) at UPPMAX, Uppsala University.

\begin{appendix}
\section{Proof of the inequality~\eqref{eq:ineq}}\label{appendixx}

Let us first state two elementary inequalities:
\begin{itemize}
\item for all $0\le a\le b$ and $n\in\N$, one has
$
0\le b^n-a^n\le nb^{n-1}(b-a),
$
\item for all $z\in[0,\infty)$, one has
$
0\le \frac{1}{1+z}-e^{-z}\le C\min(1,z^2).
$
\end{itemize}
As a consequence, for all $n\in\N$ and $z\in[0,\infty)$ one has
\[
0\le \frac{1}{(1+z)^n}-e^{-nz}\le \frac{n}{(1+z)^{n-1}}\bigl(\frac{1}{1+z}-e^{-z}\bigr).
\]
\begin{proof}[Proof of~\eqref{eq:ineq}]
For all $n\ge 3$ and $z\in[0,\infty)$, one has
\begin{align*}
n|\frac{1}{(1+z)^n}-e^{-nz}|&\le \frac{Cn^2z^2}{(1+z)^{n-1}}\le \frac{Cn^2z^2}{1+(n-1)z+\frac{(n-1)(n-2)}{2}z^2}\le \frac{2Cn^2}{(n-1)(n-2)}\le C.
\end{align*}
The cases $n=1$ and $n=2$ are treated separately, one has
\[
\underset{z\in[0,\infty)}\sup~|\frac{1}{(1+z)}-e^{-z}|+\underset{z\in[0,\infty)}\sup~2|\frac{1}{(1+z)^2}-e^{-2z}|<\infty.
\]
This concludes the proof of the first inequality.
To prove the second inequality, observe first that one has
\[
\underset{n\in\N,z\in[0,\infty)}\sup~|\frac{1}{(1+z)^n}-e^{-nz}|\le 2.
\]
In addition, for all $n\ge 2$ and $@miscz\in[0,\infty)$, one has
\begin{align*}
\frac{|\frac{1}{(1+z)^n}-e^{-nz}|}{z}&\le \frac{Cnz}{(1+z)^{n-1}}\le \frac{Cnz}{1+(n-1)z}\le \frac{Cn}{n-1}\le C.
\end{align*}
The case $n=1$ is treated separately: using the inequality $\min(1,z^2)\le z$ one has
\[
\underset{z\in[0,\infty)}\sup~\frac{|\frac{1}{1+z}-e^{-z}|}{z}\le C.
\]
Gathering the results concludes the proof of the second inequality.
\end{proof}
\end{appendix}


\end{document}